\titleclass{\part}{top}
\titleformat{\part}[display]
{\huge\bfseries\centering}{\partname~\thepart}{0pt}{}
\titlespacing*{\part}{0pt}{160pt}{40pt}
\newcommand{\R}{\mathbb{R}}
\newcommand{\N}{\mathbb{N}}
\newcommand{\E}{\mathbb{E}}
\newcommand{\PP}{\mathbb{P}}
\DeclareMathOperator{\proj}{proj}
\DeclareMathOperator{\TV}{\operatorname{TV}}
\renewcommand{\epsilon}{\varepsilon}
\newcommand{\1}{\mathds{1}}
\newcommand{\mathbbm}[1]{\1}
\title{Stability of the Weak Martingale Optimal Transport Problem}
\newcommand{\specificthanks}[1]{\@fnsymbol{#1}}
\author{
	M. Beiglböck\footnote{University of Vienna, Austria. Email: \href{mailto:mathias.beiglboeck@univie.ac.at}{\texttt{mathias.beiglboeck@univie.ac.at}},}\textsuperscript{\specificthanks{3},~}\thanks{acknowledges support from FWF through grant no.\ Y00782.}
	\and 
	B. Jourdain\footnote{CERMICS, Ecole des Ponts, INRIA, Marne-la-Vallée, France. E-mails: \href{mailto:benjamin.jourdain@enpc.fr}{\texttt{benjamin.jourdain@enpc.fr}}, \href{mailto:william.margheriti@enpc.fr}{\texttt{william.margheriti@enpc.fr}}} 
	\and
	W. Margheriti\textsuperscript{\specificthanks{2},~}\thanks{acknowledges support from the \textquotedblleft Chaire Risques Financiers\textquotedblright , Fondation du Risque.}
	\and
	G. Pammer\footnote{ETH Zurich, Switzerland. Email: \href{mailto:gudmund.pammer@math.ethz.ch}{\texttt{gudmund.pammer@math.ethz.ch}}}\textsuperscript{\specificthanks{1},~}\thanks{acknowledges support from the Austrian Science Fund (FWF) through grant number W1245.}
}
\date{}
\numberwithin{equation}{section}
\theoremstyle{plain}
\newtheorem{prooff}{Proof}[section]
\newtheorem{lemma}[prooff]{Lemma}
\newtheorem{theorem}[prooff]{Theorem}
\newtheorem{proposition}[prooff]{Proposition}
\newtheorem{corollary}[prooff]{Corollary}
\theoremstyle{definition}
\newtheorem{rk}[prooff]{Remark}
\newtheorem{remark}[prooff]{Remark}
\newtheorem{definition}[prooff]{Definition}
\theoremstyle{definition}
\theoremstyle{plain}
\begin{document}
\maketitle
\begin{abstract}
While many questions in (robust) finance can be posed in the martingale  optimal transport (MOT) framework, others require to consider also non-linear cost functionals. 
Following the terminology of Gozlan, Roberto, Samson and Tetali \cite{GoRoSaTe17} for classical optimal transport, this corresponds to \emph{weak} martingale optimal transport (WMOT).


In this article we establish stability of WMOT 
 which is important since financial data can give only imprecise information on the underlying marginals. As application, we deduce the stability of the superreplication bound for VIX futures as well as  the stability of the stretched Brownian motion and we derive a monotonicity principle for WMOT.

%
%
\end{abstract}

{\bf Keywords:}  VIX futures,  Robust finance, Weak optimal transport, Martingale Optimal Transport, Stability, Convex order, Martingale couplings.

\section{Introduction}

\subsection{Overview}


Gozlan, Roberto, Samson and Tetali \cite{GoRoSaTe17} recently introduced a non-linear relaxation of classical optimal transport. On the one hand, this framework of  \emph{weak optimal  transport} (WOT) still retains many characteristics of usual optimal transport,  allowing for a compelling theory. On the other hand, this type of relaxation is suitable to cover a number of problems from geometric inequalities over  optimal mechanism design to the Schrödinger problem that lie outside the scope of the classical theory, see \cite{GoRoSaSh18, GoJu18, AlBoCh18, BaPa20}.
The same non-linear relaxation is also required in a number of challenges appearing in mathematical finance: this concerns the pricing of VIX futures, stretched Brownian motion, robust pricing for fixed income markets and the optimal Skorokhod embedding problem. Compared to other applications of WOT, the financial applications stick out through the additional martingale condition that results from the no arbitrage assumption prevalent in mathematical finance. This  leads to a  weak martingale optimal transport problem (WMOT) which is the main focus of this article. 

The reason why a transport type problem pops up in finance is that the marginals are fixed based on the celebrated observation of Breeden--Litzenberger \cite{BrLi78} that prices of traded vanilla options fix the marginals of the asset price process $(S_t)$ at the respective maturity times. 

This is only an approximation to reality as we will know only the prices of finitely many derivatives (up to a bid ask spread). To fit the challenges from finance into the WMOT framework, it is thus crucial to establish stability of WMOT w.r.t.\ to the initial data, which is the main aim of this paper.
As particular applications we consider the robust pricing problem for VIX options and the stretched Brownian motion and we will derive a monotonicity principle for the WMOT problem.
These results will be presented in some detail in the remainder of the introduction, while the corresponding proofs are postponed to later sections. 

\subsection{WMOT-framework and main result}

The main contribution of this article is to establish stability for the weak and weak martingale optimal transport, extending and unifying previously known stability results in MOT and WOT. To state our results rigorously, we need to introduce some concepts. 

Let $X$ and $Y$ be  Polish spaces  endowed with the compatible and complete metrics $d_X$ and $d_Y$. Let $\mu$ be in the set $\mathcal P(X)$ of probability measures on $X$, $\nu\in\mathcal P(Y)$ and $C:X\times\mathcal P(Y)\to\R_+$ be a nonnegative measurable function which is convex in the second argument.\footnote{Convexity of $C$ in the second argument is a standing assumption in weak transport theory, in fact the theory for the general  case can be reduced  to the convex instance, see \cite[Theorem 3.7]{AcBePa20}. } We denote by $\Pi(\mu,\nu)$ the set of couplings between $\mu$ and $\nu$, that is $\pi\in\Pi(\mu,\nu)$ if and only if $\pi\in\mathcal P(X\times Y)$ is such that for any measurable subsets $A\subset X$ and $B\subset Y$, $\pi(A\times Y)=\mu(A)$ and $\pi(X\times B)=\nu(B)$. Then the WOT problem consists in the minimisation of
\begin{equation}\label{WOT2}
\tag{WOT}
V_C(\mu,\nu):=\inf_{\pi\in\Pi(\mu,\nu)}\int_XC(x,\pi_x)\,\mu(dx),
\end{equation}
where for all $\pi\in\Pi(\mu,\nu)$, $(\pi_x)_{x\in\R}$ denotes a disintegration of $\pi$ with respect to its first marginal, which we write $\pi(dx,dy)=\mu(dx)\,\pi_x(dy)$, or with a slight abuse of notation, $\pi=\mu\otimes\pi_x$ if the context is not ambiguous. Note that for a measurable map $c:X\times Y\to\R_+$, the WOT problem with the cost function $C:(x,p)\mapsto\int_Yc(x,y)\,p(dy)$ linear in the measure argument amounts to the classical Optimal Transport (OT) problem.

To recall some basic results on existence, duality and stability of the WOT problem, we introduce the appropriate topologies of the underlying spaces. We fix $r\ge1$ and $x_0$, $y_0$ two arbitrary elements of $X$ and $Y$ respectively, their specific value having no impact on our study. Let $\mathcal P^r(X)$ denote the set of all probability measures on $X$ with finite $r$-th moment, i.e.\ $
\mathcal P^r(X)=\left\{p\in\mathcal P(X)\colon\int_Xd_X^r(x,x_0)\,p(dx)<+\infty\right\}.$
Let $\mathcal C(X)$ denote the set of all real-valued continuous functions on $X$. The set $\mathcal P^r(X)$ is equipped with the weak topology induced by
$
\Phi^r(X)=\left\{f\in {\mathcal C}(X)\colon \exists\alpha>0,\ \forall x\in X,\ \vert f(x)\vert\le\alpha(1+d_X^r(x,x_0))\right\}.
$
A compatible metric 
is given by the $r$-Wasserstein distance
$\mathcal W_r$, see \cite{AmGi13,Sa15,Vi1,Vi09} for more details.

In analogy to the classical transport problem it is natural to assume that the cost function $C:X\times\mathcal P^r(Y)\to\R_+$ is \emph{lower semicontinuous}. As in the classical case, the WOT problem then admits a minimiser and a natural  duality relation holds, see \cite{BaBePa18}. 
As in the case of classical optimal transport, stability is a more delicate question; in \cite[Theorem 1.3]{BaPa19}, it is established for continuous cost functions satisfying an appropriate growth condition. 


Motivated by the  model-independent pricing problem, the martingale optimal transport  (MOT) has received considerable attention, see \cite{HoNe12, GaHeTo13, BeHePe12, DoSo12,  ChKiPrSo20}. We also refer to \cite{ObSi17, DeTo17, GhKiLi19} for the multi-dimensional case and to \cite{BeCoHu14, BeNuSt19} for connections to Skorokhod problem. Stability of MOT, that is the continuous dependence of value and optimiser  on the marginals, is a crucial property in light of its numerical resolution and the imperfect data coming available in financial applications and was recently established in \cite{GuOb19,BaPa19,Wi20}.

In one time step, the MOT problem reads as
\begin{equation}
	\label{MOT} \tag{MOT}
	\inf_{\pi \in \Pi_M(\mu,\nu)} \int_{\R \times \R} c(x,y) \, \pi(dx,dy),
\end{equation}
where $\mu,\nu \in \mathcal P^1(\R)$, $c\colon \R \times \R \to \R_+$ is a measurable cost function, and
\[ \Pi_M(\mu,\nu) := \left\{ \pi = \mu \times \pi_x \in \Pi(\mu,\nu) \colon \int_{\R} y \, \pi_x(dy) = x \, \mu\text{-a.s.} \right\} \]
The martingale constraint reflects the condition for a financial market with vanishing interest rates to be  free of arbitrage. According to Strassen's theorem, $\Pi_M(\mu,\nu)$ is not empty if and only if $\mu$ is smaller than $\nu$ in the convex order in the following sense :
$$\forall \varphi:\R\to\R\mbox{ convex },\;\int_\R\varphi(x)\mu(dx)\le\int_\R\varphi(y)\nu(dy),$$
which we also write $\mu\le_c\nu$.

Different problems in finance require to consider also the `weak' version of this problem (WMOT) which permits costs $C$ of the form $C\colon \R \times \mathcal P(\R) \to \R_+$.
The WMOT problem consists in the minimisation of
\begin{equation}\label{WMOT2}
\tag{WMOT}
V^M_C(\mu,\nu):=\inf_{\pi\in\Pi_M(\mu,\nu)}\int_{\R}C(x,\pi_x)\,\mu(dx).
\end{equation}

For a measurable map $c:\R \times \R \to \R_+$, the WMOT problem associated to the cost function $C:(x,p)\mapsto\int_\R c(x,y)\,p(dy)$ linear in the measure argument amounts to an ordinary MOT problem.

 Formally, WMOT is of course captured by  WOT if one sets $C(x,p)$ to be  $\infty$ if the barycenter of $p$ does not equal $x$.
 However this has previously not found much use as this cost functions does not fall in the realm of the typical regularity assumptions.
 Specifically, stability of WOT and MOT is established in two separate results in \cite{BaPa19}.
 In the present article, we unify and extend these results, in particular we will prove the following:

\begin{theorem}[Stability]\label{thm:2Polish2easy} Assume that $C:\R\times\mathcal P^1(\R)\to\R$ is continuous, convex in the second argument and that there exists a  constant $K>0$ such that for $(x,p)\in \R\times\mathcal P^1(\R)$
	\begin{equation}\label{CgrowthexponentrWOT2easy}
	\vert C(x,p)\vert\le K\left(|x|+\int_Y |y|\,p(dy)\right).
	\end{equation}
	For $k\in\N$, let $\mu^k, \nu^k\in\mathcal P^1(\R)$,  (resp. satisfying $\mu^k\le_c\nu^k$) converge in $\mathcal P^1(\R)$ to $\mu$ and $\nu$, respectively. 	Then
        $$\lim_{k\to\infty}V_C(\mu^k,\nu^k)=V_C(\mu,\nu)\;\;\left(\mbox{resp. }\lim_{k\to\infty}V^M_C(\mu^k,\nu^k)=V^M_C(\mu,\nu)\right).$$
      There exist minimisers $\pi^{k,*}\in\Pi(\mu^k,\nu^k)$ (resp. $\pi^{k,*}\in\Pi_M(\mu^k,\nu^k)$) of $V_C(\mu^k,\nu^k)$ (resp. $V^M_C(\mu^k,\nu^k)$) and any accumulation point of $(\pi^{k,*})_{k\in\N}$ for the weak convergence topology is a minimiser of $V_C(\mu,\nu)$ (resp. $V^M_C(\mu,\nu)$).
	\end{theorem}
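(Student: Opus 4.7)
The plan is to derive both the $V_C$ and $V^M_C$ stability statements in parallel, decomposing into a liminf and a limsup inequality, and then concluding that every weak accumulation point of approximate optimisers is optimal for the limit problem. The common backbone is that $\mathcal W_1$-convergence of $\mu^k\to\mu$ and $\nu^k\to\nu$ forces, for every choice $\pi^k\in\Pi(\mu^k,\nu^k)$, the family $(\pi^k)_k$ to be tight and $|x|+|y|$ to be uniformly integrable under $\pi^k$; hence every subsequential weak limit actually converges in $\mathcal W_1$ and lies in $\Pi(\mu,\nu)$. Existence of minimisers for each fixed $k$ follows from this compactness together with the lsc of the weak cost; in the martingale case one further uses that $\Pi_M(\mu^k,\nu^k)$ is closed under weak convergence with uniformly integrable marginals.

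\emph{Liminf inequality.} Pick optimisers $\pi^{k,*}\in\Pi_M(\mu^k,\nu^k)$ and extract a $\mathcal W_1$-convergent subsequence with limit $\pi^*\in\Pi(\mu,\nu)$. The martingale identity $\int\varphi(x)(y-x)\,\pi^{k,*}(dx,dy)=0$ for bounded continuous $\varphi$ passes to the limit by uniform integrability of $y-x$, so $\pi^*\in\Pi_M(\mu,\nu)$. The key technical step is
\[
\liminf_{k\to\infty}\int_\R C(x,\pi^{k,*}_x)\,\mu^k(dx)\ge\int_\R C(x,\pi^*_x)\,\mu(dx).
\]
For this I would write $C$, which is continuous, convex in the measure variable, and of linear growth, as the supremum of a countable family of continuous affine functionals of the form $(x,p)\mapsto a_n(x)+\int b_n(x,y)\,p(dy)$ with $a_n,b_n$ of linear growth (Hahn--Banach / envelope representation in the $\mathcal W_1$-dual). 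For each $n$, the corresponding integral against $\mu^k\otimes\pi^{k,*}_x$ equals $\int a_n\,d\mu^k+\int b_n\,d\pi^{k,*}$, which converges to its limit counterpart by $\mathcal W_1$-continuity of $p\mapsto\int f\,dp$ for $f$ of linear growth. Taking the supremum and then the liminf yields the inequality and proves that $\pi^*$ is optimal, so $V^M_C(\mu,\nu)\le\liminf_k V^M_C(\mu^k,\nu^k)$.

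\emph{Limsup inequality.} Given any minimiser $\pi$ of $V^M_C(\mu,\nu)$, I need $\pi^k\in\Pi_M(\mu^k,\nu^k)$ with $\pi^k\to\pi$ weakly and $\int C(x,\pi^k_x)\,\mu^k(dx)\to\int C(x,\pi_x)\,\mu(dx)$. For plain WOT this is handled by gluing: compose $\pi$ with an optimal $\mathcal W_1$-coupling of $(\mu^k,\mu)$ on the left and one of $(\nu,\nu^k)$ on the right, and conclude from the continuity and linear growth of $C$ together with $\mathcal W_1$-convergence of the glued couplings. The martingale case is the real obstacle, since such a naive gluing destroys the barycenter identity $\int y\,\pi_x(dy)=x$. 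I would address it by a shadow-coupling construction in the spirit of Beiglb\"ock--Juillet: replace each conditional $\pi_x$ by a controlled modification supported in the new second marginal $\nu^k$ whose barycenter matches the corresponding new first-marginal point, using $\mu^k\le_c\nu^k$ to guarantee feasibility of the assembled martingale kernel. A measurable-selection / disintegration step then produces $\pi^k\in\Pi_M(\mu^k,\nu^k)$ with the required weak convergence, and convergence of the cost follows from the continuity and linear growth of $C$ together with uniform integrability.

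\emph{Conclusion and main obstacle.} Combining the two inequalities gives $V^M_C(\mu^k,\nu^k)\to V^M_C(\mu,\nu)$, and the liminf step simultaneously yields that every weak accumulation point of the minimisers is itself a minimiser of the limit problem; the statement for $V_C$ is obtained by discarding every martingale-specific ingredient. The main obstacle is the limsup step for $V^M_C$: producing a martingale-preserving approximation with the prescribed new marginals that converges to a given target optimiser, which requires a bespoke shadow-coupling construction rather than any off-the-shelf gluing.
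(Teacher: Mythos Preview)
Your overall architecture—split into a $\liminf$ and a $\limsup$ inequality, then read off optimality of accumulation points—coincides with the paper's. You also correctly identify the martingale approximation in the $\limsup$ step as the crux. However, there is a genuine gap in your $\liminf$ argument, and a related imprecision in the $\limsup$ step.

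\textbf{The liminf gap.} From the envelope representation $C(x,p)=\sup_n L_n(x,p)$ with $L_n(x,p)=a_n(x)+\int b_n(x,y)\,p(dy)$ you obtain
\[
\liminf_{k}\int_\R C(x,\pi^{k,*}_x)\,\mu^k(dx)\ \ge\ \sup_n\Bigl[\int a_n\,d\mu+\int b_n\,d\pi^*\Bigr]\ =\ \sup_n\int_\R L_n(x,\pi^*_x)\,\mu(dx).
\]
But the right-hand side is $\sup_n\int L_n$, not $\int\sup_n L_n=\int C(x,\pi^*_x)\,\mu(dx)$, and these differ in general. For instance, with $C(x,p)=\bigl|\int y\,p(dy)\bigr|$, $\mu=\tfrac12\delta_{-1}+\tfrac12\delta_{1}$ and $\pi^*_x=\delta_x$, one has $\int C(x,\pi^*_x)\,\mu(dx)=1$, whereas for the natural affine minorants $L_a(x,p)=a\int y\,p(dy)$, $|a|\le 1$, every integral $\int L_a(x,\pi^*_x)\,\mu(dx)$ vanishes. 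Closing the gap by choosing $n=n(x)$ measurably (a selection of the subdifferential of $C(x,\cdot)$ at $\pi^*_x$) destroys continuity in $x$, so $\int b_{n(x)}(x,y)\,\pi^{k,*}(dx,dy)$ no longer passes to the limit under mere $\mathcal W_1$-convergence of $\pi^{k,*}$. One can try to repair this with Lusin-type approximations of $x\mapsto n(x)$ by continuous selectors, but you do not indicate any such step, and it is not automatic.

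The paper avoids this entirely by lifting to $\R\times\mathcal P^1(\R)$ via $J(\pi)=\mu(dx)\,\delta_{\pi_x}(dp)$. On the lifted space the cost $\int C(x,p)\,J(\pi)(dx,dp)$ is a \emph{linear} functional of $J(\pi)$; relative compactness of $(J(\pi^{k,*}))_k$ yields a subsequential limit $P$, lower semicontinuity of $C$ gives $\liminf_k\int C\,dJ(\pi^{k,*})\ge\int C\,dP$, and then convexity of $C(x,\cdot)$ enters through a Jensen inequality $\int C\,dP\ge\int C(x,\hat I(P)_x)\,\mu(dx)$ together with $\hat I(P)=\pi^*$ (continuity of the intensity map). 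This is the content of Proposition~\ref{prop: C Pf lsc}\ref{it:Cpix lsc}.

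\textbf{The limsup step.} The same issue recurs: $\mathcal W_1$-convergence of $\pi^k\to\pi$ does \emph{not} imply $\int C(x,\pi^k_x)\,\mu^k(dx)\to\int C(x,\pi_x)\,\mu(dx)$ (the functional is only lsc, not continuous, in $\mathcal W_1$). What is actually needed is convergence of $J(\pi^k)$ to $J(\pi)$, i.e.\ convergence in the adapted Wasserstein distance $\mathcal{AW}_1$. For WOT the gluing you describe does deliver $\mathcal{AW}_1$-convergence (this is Proposition~\ref{prop:1b}), but this should be stated; for WMOT the existence of $\pi^k\in\Pi_M(\mu^k,\nu^k)$ with $\mathcal{AW}_1(\pi^k,\pi)\to 0$ is Theorem~\ref{thm:1b}, established in the companion paper \cite{BeJoMaPa21a}, and is substantially more involved than a shadow construction—so your diagnosis of this as the main obstacle is correct, but ``$\mathcal W_1$-convergence of the glued couplings'' is not the right mode of convergence to invoke.
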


In Section \ref{subsec:stability} we will present several stronger and more general versions of Theorem \ref{thm:2Polish2easy}. Growth- and continuity assumptions may be weakened and in the case of WOT, the results apply to abstract Polish spaces. \textcolor{black}{In contrast to Theorem \ref{thm:2Polish2easy}, according to an intriguing counter-example given by Br\"uckerhoff and Juillet \cite{BrJu21}, stability fails for MOT and, as a consequence, for the more general WMOT problem in $\R^d$ with $d\ge 2$. That is why all the statements concerning those problems only address the one-dimensional case. We note that in view of financial applications, this represents no restriction.}

\subsection{Robust pricing of VIX-futures}

The domain of robust mathematical finance distinctly surpasses the one of MOT. As example consider the Volatility Index (VIX), often referred to as the Fear Index, which is a popular measure to determine market sentiment.
When investors expect the market to move vigorously, they typically tend to purchase more options, which has an impact on implied volatility levels.
The VIX is by definition the implied volatility calculated on a 30 days horizon on the S\&P 500.
The more the VIX increases, the more demand is expressed for options, which become more expensive.
In that case the market is described as volatile.
Conversely, a decreasing VIX often means less demand and therefore decreasing option prices, hence the market is perceived as calm.

We consider 
the S\&P 500 $(S_t)_{t\in\{T_1,T_2\}}$, tradable at dates $T_1$ and $T_2=T_1+30\text{ days}$.
We suppose known the market price of call options for any strike $K\ge0$, so that by the Breeden-Litzenberger formula \cite{BrLi78} we get the respective probability distributions $\mu$ and $\nu$ of $S_{T_1}$ and $S_{T_2}$.
We allow trading at time $0$ in vanilla options with maturities $T_1$ and $T_2$, and trading at time $T_1$ in the S\&P 500 and the forward-starting log-contract, that is the option with payoff $\frac{-2}{T_2-T_1}\ln\frac{S_{T_2}}{S_{T_1}}$ at $T_2$.
In this setting, Guyon, Menegaux and Nutz \cite{GuMeNu17} derive the model-independent arbitrage-free upper bound for the VIX future expiring at $T_1$, given by the smallest superreplication price at time $0$
\begin{equation}
\label{eq:primal VIX}
P_{\textrm{super}}(\mu,\nu) = \inf \{\mu(u_1) + \nu(u_2)\},
\end{equation}
where the infimum is taken over all $(u_1,u_2)\in L^1(\mu)\times L^1(\nu)$ and measurable maps $\Delta^S,\Delta^L$ such that for all $(x,y,v)\in(0,+\infty)^2\times[0,+\infty)$,
\begin{equation}\label{replicationCondition}
u_1(x) + u_2(y) +\Delta^S(x,v)(y- x) + \Delta^L(x,v)\left(-\frac{2}{T_2-T_1} \ln \frac{y}{x} -v^2 \right)-v \ge 0.
\end{equation}

Note that the primal problem $P_{\textrm{super}}(\mu,\nu)$ given by \eqref{replicationCondition} involves three variables $x,y,v$, which stand respectively for the S\&P 500 at time $T_1$, the S\&P 500 at time $T_2$, and the VIX at time $T_1$.
Naturally we would then expect to find three marginals in the dual formulation.
Strikingly, in \cite[Proposition 4.10]{GuMeNu17} the dual side of the superreplication of the VIX is reformulated as a WMOT problem
.
\begin{proposition}[Guyon, Menegaux, Nutz, 2017]
	\label{prop:VIX}
	Let $0<T_1<T_2$.
	Let $\mu,\nu$ be probability measures on $(0,+\infty)$, in the convex order, and which finitely integrate $|\ln(x)| + |x|$.
	Then the dual problem $D_{super}$ consists of
	\begin{equation}
	\label{eq:dual VIX}
	D_{\textrm{super}}(\mu,\nu) =
	\sup_{\pi \in \Pi_M(\mu,\nu)} \int_{(0,+\infty)} \sqrt{\frac{2}{T_2-T_1} \int_{(0,+\infty)} \ln \left( \frac{x}{y} \right) \, \pi_x(dy) }\, \mu(dx),
	\end{equation}
	The values of $P_{\textrm{super}}(\mu,\nu)$ and $D_{\textrm{super}}(\mu,\nu)$ coincide.
\end{proposition}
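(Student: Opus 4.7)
My strategy is standard Lagrangian dualisation combined with an elimination of the auxiliary VIX variable via Jensen's inequality. The dual variable that naturally emerges is a nonnegative measure $m$ on $(0,+\infty)^2\times[0,+\infty)$ that should be thought of as the joint law of $(S_{T_1},S_{T_2},\mathrm{VIX}_{T_1})$ under some pricing measure; the VIX variable is then integrated out against the coupling $\pi\in\Pi_M(\mu,\nu)$ of $(S_{T_1},S_{T_2})$.

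\textbf{Formal dualisation.} I view the pointwise constraint (\ref{replicationCondition}) as indexed by $(x,y,v)\in(0,+\infty)^2\times[0,+\infty)$ and attach a nonnegative Borel measure $m$ on this space as Lagrange multiplier, so that
\[ P_{\textrm{super}}(\mu,\nu)=\inf_{u_1,u_2,\Delta^S,\Delta^L}\sup_{m\geq 0}\Bigl\{\mu(u_1)+\nu(u_2)-\int G(x,y,v)\,m(dx,dy,dv)\Bigr\}, \]
with $G$ denoting the left-hand side of (\ref{replicationCondition}). Provisionally exchanging $\inf$ and $\sup$ and minimising out the linear variables $u_1,u_2,\Delta^S,\Delta^L$ forces: $m$ has first marginal $\mu$ and second marginal $\nu$; disintegrating $m(dx,dy,dv)=\mu(dx)\kappa(x,dv)m_{x,v}(dy)$, the martingale identity $\int y\,m_{x,v}(dy)=x$ and the VIX identity $v^2=\frac{2}{T_2-T_1}\int\ln(x/y)\,m_{x,v}(dy)$ hold for $(\mu\otimes\kappa)$-a.e.\ $(x,v)$. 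Under these constraints the residual objective is $\int v\,dm$.

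\textbf{Collapse to WMOT.} Setting $\pi_x(dy):=\int m_{x,v}(dy)\,\kappa(x,dv)$ gives $\pi:=\mu\otimes\pi_x\in\Pi_M(\mu,\nu)$, and averaging the VIX identity against $\kappa(x,\cdot)$ yields $\int v^2\,\kappa(x,dv)=\frac{2}{T_2-T_1}\int\ln(x/y)\,\pi_x(dy)$. Since the square root is concave, Jensen's inequality delivers
\[ \int v\,dm=\int\Bigl(\int v\,\kappa(x,dv)\Bigr)\mu(dx)\leq\int\sqrt{\frac{2}{T_2-T_1}\int\ln(x/y)\,\pi_x(dy)}\,\mu(dx), \]
with equality when $\kappa(x,\cdot)$ is the Dirac mass at the square root on the right. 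The supremum over admissible $m$ thus collapses to $D_{\textrm{super}}(\mu,\nu)$.

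\textbf{Main obstacle.} The real work lies in justifying the $\inf$--$\sup$ exchange, i.e.\ proving the absence of a duality gap between $P_{\textrm{super}}$ and the dualised expression. I would follow the strategy of the MOT pricing-hedging duality of Beiglb\"ock--Henry-Labord\`ere--Penkner \cite{BeHePe12}: first establish $P_{\textrm{super}}=D_{\textrm{super}}$ for $\mu,\nu$ compactly supported in $(0,+\infty)$ via a Hahn--Banach or Sion minimax argument in a weighted space of continuous functions adapted to the growth $|\ln x|+|x|$, then approximate the general case while preserving the convex order. The delicate technical points are controlling the log-contract payoff near $0$ and $+\infty$ and propagating the hedging pair $(\Delta^S,\Delta^L)$ through the approximation; the hypothesis $\mu\leq_c\nu$ is used precisely to keep $\Pi_M(\mu,\nu)$ non-empty throughout.
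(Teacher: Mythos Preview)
The paper does not prove Proposition~\ref{prop:VIX}; it is quoted verbatim from \cite[Proposition~4.10]{GuMeNu17} and used as a black box. The only VIX-related proof in the paper is that of Theorem~\ref{thm:VIX} (stability of $D_{\textrm{super}}$), which takes the WMOT formulation \eqref{eq:dual VIX} as given and applies Theorem~\ref{thm:2Polish2} with $X=Y=(0,+\infty)$ and $f(x)=g(x)=|\ln x|+|x|$. So there is no ``paper's own proof'' of this statement to compare against.

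On the merits of your sketch: the formal Lagrangian computation and the Jensen collapse of the $v$-variable are correct and are indeed the heuristic behind the result. You correctly identify the genuine content as the no-duality-gap step. One comment on that step: invoking \cite{BeHePe12} is not quite right, because the superreplication inequality \eqref{replicationCondition} involves the extra variable $v$ and the hedging instruments $\Delta^S,\Delta^L$ depend on $(x,v)$, so this is not a two-marginal MOT duality. The argument in \cite{GuMeNu17} proceeds instead by a direct Fenchel--Rockafellar/minimax over the triple space and then performs the Jensen reduction on the dual side, rather than reducing to a known MOT duality. Your compact-support-then-approximate plan is plausible but would have to be carried out in the three-variable setting, and the delicate point you flag (controlling the log-payoff and propagating $\Delta^L$ through the approximation) is exactly where the work lies.
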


We contribute to the theory by establishing stability of \eqref{eq:dual VIX}:

\begin{theorem}
	\label{thm:VIX}
	In the setting of Proposition \ref{prop:VIX}, for $k\in \N$ let each pair $(\mu^k,\nu^k),(\mu,\nu)$ of probability measures on $(0,+\infty)$ be in the convex order and finitely integrate $f(x) := |\ln(x)| + |x|$.
	For $k\to+\infty$, let $(\mu^k,\nu^k)$ converge weakly to $(\mu,\nu)$ and
	\[
		\mu^k(f) \to \mu(f), \quad \nu^k(f) \to \nu(f).
	\]
	\textcolor{black}{Then 
	\[	
		\lim_{k \to +\infty} D_{\textrm{super}}(\mu^k,\nu^k) = D_{\textrm{super}}(\mu,\nu),
	\]
	for each $k\in\N$, there exist in $\Pi_M(\mu^k,\nu^k)$ maximisers of $D_{\textrm{super}}(\mu^k,\nu^k)$, and any weak accumulation point of a sequence $(\pi^{k,\ast})_{k \in \N}$ of such maximizers maximizes $D_{\textrm{super}}(\mu,\nu)$.}
\end{theorem}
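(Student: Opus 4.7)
The plan is to realise \eqref{eq:dual VIX} as the negative of a WMOT problem and apply the generalised version of Theorem~\ref{thm:2Polish2easy} announced in Section~\ref{subsec:stability}. Concretely, define on $(0,+\infty)\times\mathcal P((0,+\infty))$
$$
C(x,p):=\begin{cases}-\sqrt{\dfrac{2}{T_2-T_1}\displaystyle\int_{(0,+\infty)}\ln(x/y)\,p(dy)}&\text{if }\displaystyle\int_{(0,+\infty)}\ln(x/y)\,p(dy)\ge 0,\\ +\infty&\text{otherwise.}\end{cases}
$$
For any $\pi\in\Pi_M(\mu,\nu)$, the martingale property $\int y\,\pi_x(dy)=x$ together with Jensen's inequality applied to the convex function $-\ln$ gives $\int\ln(x/y)\,\pi_x(dy)\ge 0$ for $\mu$-a.e.~$x$; hence $C(x,\pi_x)$ is finite $\mu$-almost surely and $D_{\textrm{super}}(\mu,\nu)=-V^M_C(\mu,\nu)$. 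Any minimiser of $V^M_C(\mu,\nu)$ is a maximiser of $D_{\textrm{super}}(\mu,\nu)$, and conversely.

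Next I verify the hypotheses of the WMOT stability result with the weight $f(x)=|\ln x|+|x|$. The functional $p\mapsto L_x(p):=\int\ln(x/y)\,p(dy)=\ln x-\int\ln y\,p(dy)$ is affine, its $\{L_x\ge 0\}$ level set is convex, and composing the convex decreasing map $u\mapsto-\sqrt{u}$ with an affine function yields a convex function; the $+\infty$ extension outside the convex half-space preserves convexity in $p$. Since $|\ln y|\le f(y)$, the map $p\mapsto\int\ln y\,p(dy)$ is continuous on $\mathcal P^f((0,+\infty))$ endowed with the $f$-weighted topology, so $(x,p)\mapsto L_x(p)$ is jointly continuous on $(0,+\infty)\times\mathcal P^f((0,+\infty))$. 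Combining with continuity of $\sqrt{\cdot}$ on $[0,+\infty)$, the cost $C$ is continuous on $\{L_x>0\}$ and lower semicontinuous everywhere (boundary sequences give either $\to 0$ from the admissible side or $+\infty$ from the forbidden side). Finally, using $\sqrt{a}\le 1+a$ for $a\ge 0$,
$$
|C(x,p)|\le\sqrt{\tfrac{2}{T_2-T_1}}\bigl(1+|\ln x|+\textstyle\int|\ln y|\,p(dy)\bigr)\le K\bigl(1+f(x)+\textstyle\int f(y)\,p(dy)\bigr),
$$
which is the $f$-weighted growth condition.

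With the hypotheses in place, I invoke the generalisation of Theorem~\ref{thm:2Polish2easy} in Section~\ref{subsec:stability} that treats lower semicontinuous costs with growth of the above form and convergence of marginals in the $f$-weighted topology. The assumed joint weak convergence $(\mu^k,\nu^k)\to(\mu,\nu)$ combined with $\mu^k(f)\to\mu(f)$ and $\nu^k(f)\to\nu(f)$ is exactly convergence in $\mathcal P^f((0,+\infty))^2$. The general statement then yields $V^M_C(\mu^k,\nu^k)\to V^M_C(\mu,\nu)$, the existence of minimisers $\pi^{k,*}\in\Pi_M(\mu^k,\nu^k)$, and the fact that every weak accumulation point of $(\pi^{k,*})_{k\in\N}$ minimises $V^M_C(\mu,\nu)$. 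Translating through $D_{\textrm{super}}=-V^M_C$ delivers the three claims of the theorem.

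The main obstacle I anticipate is matching the precise hypotheses of the general stability theorem. If Theorem~\ref{thm:2Polish2easy} is strengthened in Section~\ref{subsec:stability} to cover lower semicontinuous cost functions with growth governed by a general continuous weight $f$, the proof reduces to the verifications above. If, instead, only continuous costs are handled, an additional step is required: one approximates $C$ from below by the continuous costs $C_\varepsilon(x,p):=-\sqrt{(L_x(p))^+\vee\varepsilon}+\sqrt{\varepsilon}$ (or a similar regularisation that remains convex in $p$), applies the continuous version of the stability theorem to each $C_\varepsilon$, and lets $\varepsilon\downarrow 0$ using the uniform $f$-moment bound to exchange the limits. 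The remaining technicalities (the sign convention, and the $+\infty$ extension of $C$ outside the martingale-compatible half-space) are routine.
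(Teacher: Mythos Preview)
Your overall strategy—casting $D_{\textrm{super}}$ as $-V^M_C$ and invoking the general stability Theorem~\ref{thm:2Polish2}—is exactly the paper's approach, but your choice of cost function creates an avoidable obstacle. By setting $C(x,p)=+\infty$ on $\{L_x(p)<0\}$ you lose continuity (only lower semicontinuity survives), and Theorem~\ref{thm:2Polish2} requires $C$ to be real-valued and continuous in $p$. Your proposed regularisation $C_\varepsilon(x,p)=-\sqrt{(L_x(p))^+\vee\varepsilon}+\sqrt\varepsilon$ does not fix this either: the map $u\mapsto-\sqrt{u\vee\varepsilon}$ has a downward jump in its derivative at $u=\varepsilon$, so $C_\varepsilon$ is not convex in $p$, and the $\varepsilon\downarrow 0$ argument would fail.

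The paper avoids the issue entirely by taking
\[
C(x,p)=-\sqrt{\tfrac{2}{T_2-T_1}\,\big(L_x(p)\vee 0\big)},
\]
which is continuous on all of $(0,+\infty)\times\mathcal P_f((0,+\infty))$ and satisfies the growth bound exactly as you computed. This $C$ is \emph{not} convex in $p$, but that is harmless: for the WMOT case, Theorem~\ref{thm:2Polish2} only requires convexity of $\tilde C$ (the cost set to $+\infty$ off the barycentre constraint $\int y\,p(dy)=x$). On that constraint set Jensen's inequality forces $L_x(p)\ge 0$, so there $\tilde C(x,p)=-\sqrt{\tfrac{2}{T_2-T_1}L_x(p)}$, which is the composition of the convex $-\sqrt{\,\cdot\,}$ with an affine map and hence convex. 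With this choice of $C$ all hypotheses of Theorem~\ref{thm:2Polish2} (case \ref{it:assumption Aa}) are met directly, and no approximation step is needed.
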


\subsection{Stretched Brownian motion}

In \cite{BaBeHuKa20} a Benamou-Brenier type formulation for MOT is suggested.
In dimension one, this problem consists in maximising for two probabilities $\mu,\nu$ in the convex order
\begin{equation}\label{MBB}
\tag{MBB}
MT(\mu,\nu):=\sup\E\left[\int_0^1\sigma_t\,dt\right]
\end{equation}
over all filtered probability spaces $(\Omega,(\mathcal F_t)_{t\in[0,1]},\PP)$, real-valued $(\mathcal F_t)_{t\in[0,1]}$-progressive process $(\sigma_t)_{t\in[0,1]}$ and real-valued $(\mathcal F_t)_{t\in[0,1]}$-Brownian motions $(B_t)_{t\in[0,1]}$ such that the process
\[
(M_t)_{t\in[0,1]}=\left(M_0+\int_0^t\sigma_s\,dB_s\right)_{t\in[0,1]}
\]
is a continuous martingale which satisfies $M_0\sim\mu$ and $M_1\sim\nu$. When the second moment of $\nu$ is finite, then \eqref{MBB} has a unique maximiser $(M^*_t)_{t\in[0,1]}$ \cite[Theorem 1.5]{BaBeHuKa20} called the stretched Brownian motion from $\mu$ to $\nu$, since it is the martingale subject to the constraints $M^*_0\sim\mu$ and $M^*_1\sim\nu$ which correlates the most with the Brownian motion.

Let $C_2:\R\times\mathcal P_2(\R)\to\R$ be defined for all $(x,p)\in\R\times\mathcal P_2(\R)$ by $C_2(x,p)=\mathcal W_2^2(p,\mathcal N(0,1))$, where $\mathcal N(0,1)$ denotes the unidimensional standard normal distribution. Let $\mu,\nu\in\mathcal P_2(\R)$ be in the convex order and $V^M_{C_2}(\mu,\nu)$ be the value function given by \eqref{WMOT2} for the cost function $C_2$. Let $\pi^*\in\Pi_M(\mu,\nu)$ be optimal for $V^M_{C_2}(\mu,\nu)$ and $M^*$ be the stretched Brownian motion from $\mu$ to $\nu$. Then Remark 2.1, Theorem 2.2 and Remark 2.3 from \cite{BaBeHuKa20} imply that
\begin{enumerate}[label=(\alph*)]
	\item $MT(\mu,\nu)=\frac12\left(1+\int_\R\vert y\vert^2\,\nu(dy)-V^M_{C_2}(\mu,\nu)\right)$;
	\item $\pi^*$ is the joint probability distribution of $(M^*_0,M^*_1)$, and conversely
	\begin{equation}\label{eq:stretched bm solution}
	\forall t\in[0,1],\quad M^*_t=\E\left[F_{\pi^*_X}^{-1}(F_{\mathcal N(0,1)}(B_1))\vert X,(B_s)_{0\le s\le t}\right],
	\end{equation}
	where $X\sim\mu$ is a random variable independent of the Brownian motion $(B_t)_{t\in[0,1]}$, and $F_\eta(x)=\eta((-\infty,x])$, resp. $F_{\eta}^{-1}(u)=\inf\{x\in\R:F_\eta(x)\ge u\}$ denotes the cumulative distribution function, resp.\ the quantile function of a probability distribution $\eta\in\mathcal P(\R)$.
\end{enumerate}


As a consequence of Theorems \ref{thm:2Polish} and \ref{thm:2Polish2}, we obtain the following stability result for the stretched Brownian motion:

\begin{corollary}[Stability of stretched Brownian motion]\label{cor:stretched bm} Let $r\ge2$ and $\mu^k,\nu^k,\mu,\nu\in\mathcal P^r(\R)$, $k\in\N$ be such that for all $k\in\N$, $\mu^k\le_c\nu^k$ and $\mu^k$, resp. $\nu^k$, converges to $\mu$, resp. $\nu$, in $\mathcal W_r$.
	
	For $k\in\N$, let $M^k$ be the stretched Brownian motion from $\mu^k$ to $\nu^k$ and $M^*$ be the stretched Brownian motion from $\mu$ to $\nu$. Equipping ${\mathcal C}([0,1])$ with the supremum distance and denoting by $\mathcal L(Z)$ the law of any random variable $Z$, we have $\lim_{k\to\infty}\mathcal{W}_r\left(\mathcal L((M^k_t)_{t\in[0,1]}),\mathcal L((M^*_t)_{t\in[0,1]})\right)=0$.
\end{corollary}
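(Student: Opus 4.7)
My strategy is to bootstrap the $\mathcal{W}_r$-stability of the time-$0{,}1$ joint law $\pi^{k,\ast} = \mathcal L(M^k_0, M^k_1)$ to $\mathcal{W}_r$-stability on path space by means of the explicit representation \eqref{eq:stretched bm solution} together with Doob's maximal inequality. I apply the WMOT stability results (Theorems \ref{thm:2Polish} and \ref{thm:2Polish2}) to the cost $C_2(x,p) = \mathcal{W}_2^2(p, \mathcal{N}(0,1))$, which is continuous, convex in $p$, and satisfies $C_2(x,p) \le 2\bigl(1 + \int_\R y^2\,p(dy)\bigr)$---a growth bound admissible under $r \ge 2$. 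This yields $V^M_{C_2}(\mu^k,\nu^k) \to V^M_{C_2}(\mu,\nu)$ and shows that any weak accumulation point of $(\pi^{k,\ast})_k$ is optimal for $V^M_{C_2}(\mu,\nu)$. Combined with item (b) and the uniqueness of the stretched Brownian motion \cite[Theorem 1.5]{BaBeHuKa20}, $\pi^\ast = \mathcal L(M^\ast_0, M^\ast_1)$ is the unique such optimizer, so the entire sequence $\pi^{k,\ast}$ converges weakly to $\pi^\ast$, and the $\mathcal{W}_r$-convergence of the marginals upgrades this to convergence in $\mathcal{W}_r(\R^2)$.

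I then construct a common probability space carrying an optimal $\mathcal{W}_r$-coupling $(X^k, X)$ of $(\mu^k, \mu)$---so that $\E[|X^k - X|^r] \to 0$---and a Brownian motion $B$ independent of $\sigma(X, X^k: k \in \N)$. Set $U := F_{\mathcal{N}(0,1)}(B_1)$, $Y^k := F^{-1}_{\pi^{k,\ast}_{X^k}}(U)$, $Y := F^{-1}_{\pi^\ast_X}(U)$, and $\mathcal{G}_t := \sigma(X, X^k, (B_s)_{s \le t})$. By \eqref{eq:stretched bm solution} together with the tower property (the independence $B \perp (X, X^k)$ shows that enlarging the filtration by $X$, respectively $X^k$, leaves the relevant conditional expectations unchanged), one has $M^k_t = \E[Y^k \mid \mathcal{G}_t]$ and $M^\ast_t = \E[Y \mid \mathcal{G}_t]$. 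Hence $M^k - M^\ast$ is a closed $(\mathcal{G}_t)$-martingale with $L^r$ terminal value $Y^k - Y$, and Doob's $L^r$-maximal inequality yields
\[
    \E\!\left[\sup_{t \in [0,1]} |M^k_t - M^\ast_t|^r\right] \le \left(\tfrac{r}{r-1}\right)^r \E\!\left[|Y^k - Y|^r\right].
\]
Since $U$ is independent of $(X^k, X)$ and the common-quantile coupling realizes the one-dimensional $\mathcal{W}_r$-distance, $\E[|Y^k - Y|^r \mid X^k, X] = \mathcal{W}_r^r(\pi^{k,\ast}_{X^k}, \pi^\ast_X)$, and the corollary reduces to $\E[\mathcal{W}_r^r(\pi^{k,\ast}_{X^k}, \pi^\ast_X)] \to 0$ along the chosen coupling of first marginals.

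The main obstacle is this final convergence, which is strictly stronger than the $\mathcal{W}_r(\R^2)$-convergence $\pi^{k,\ast} \to \pi^\ast$ obtained above: it requires a form of pointwise (along the optimal $(\mu^k,\mu)$-coupling) $\mathcal{W}_r$-stability of the disintegrations $x \mapsto \pi^{k,\ast}_x$. I expect to establish it by invoking the adapted (nested) Wasserstein strengthening of stability that underlies Theorems \ref{thm:2Polish} and \ref{thm:2Polish2}: strict convexity of $C_2$ in its measure argument combined with uniqueness of $\pi^\ast$ forces optimizers to converge in the nested Wasserstein distance, which is by definition exactly the quantity that integrates to the desired limit. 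As a compactness fallback, one can extract a weakly convergent subsequence of $\mathcal L(X^k, X, Y^k, Y)$, observe that its limit must project onto $\pi^\ast$ on each of the two pairs and onto the diagonal on $(X^k, X)$, use the common-uniform construction together with uniqueness of the SBM to force $Y^\infty = Y$ almost surely, and finally promote weak to $L^r$-convergence via the uniform $r$-integrability of $|Y^k|^r$ supplied by $\nu^k \to \nu$ in $\mathcal{W}_r$.
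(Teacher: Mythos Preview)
Your overall strategy coincides with the paper's: reduce to stability of the two-point law $\pi^{k,\ast}=\mathcal L(M^k_0,M^k_1)$ via the WMOT stability theorems, then lift to path space using the explicit representation \eqref{eq:stretched bm solution} and Doob's $L^r$-inequality. The reduction you obtain,
\[
\mathcal W_r^r\bigl(\mathcal L(M^k),\mathcal L(M^\ast)\bigr)\le \Bigl(\tfrac{r}{r-1}\Bigr)^r\,\E\bigl[\mathcal W_r^r(\pi^{k,\ast}_{X^k},\pi^\ast_X)\bigr],
\]
is exactly the paper's.

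There is, however, a genuine gap in how you close the argument. You fix $(X^k,X)$ to be a $\mathcal W_r$-optimal coupling of $(\mu^k,\mu)$ and then hope to control $\E[\mathcal W_r^r(\pi^{k,\ast}_{X^k},\pi^\ast_X)]$ along \emph{this} coupling via adapted-Wasserstein convergence. But $\mathcal{AW}_r(\pi^{k,\ast},\pi^\ast)\to 0$ only says that \emph{some} coupling of $(\mu^k,\mu)$ makes the integrated kernel distance small; it says nothing about the particular coupling you already chose. Your sentence ``which is by definition exactly the quantity that integrates to the desired limit'' conflates the infimum defining $\mathcal{AW}_r$ with a specific (generally suboptimal) plan. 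The paper avoids this by taking $\chi^k\in\Pi(\mu^k,\mu)$ to be \emph{optimal for $\mathcal{AW}_r(\pi^{k,\ast},\pi^\ast)$} from the outset and letting $(X^k,X)\sim\chi^k$; then
\[
\E\bigl[|X^k-X|^r+\mathcal W_r^r(\pi^{k,\ast}_{X^k},\pi^\ast_X)\bigr]=\mathcal{AW}_r^r(\pi^{k,\ast},\pi^\ast)\xrightarrow[k\to\infty]{}0
\]
directly from the strict-convexity clause of Theorem~\ref{thm:2Polish2}. To invoke that clause you also need the \emph{strict} convexity of $p\mapsto \mathcal W_2^2(p,\mathcal N(0,1))$, which you assert but do not justify; the paper proves it (Lemma~\ref{lem:stability stretched bm}) by noting that, since $\mathcal N(0,1)$ is atomless, the $\mathcal W_2$-optimal coupling with any $p$ is the Monge map $x\mapsto F_p^{-1}(F_\gamma(x))$, and a nontrivial convex combination of two distinct Monge couplings from the same atomless source is never induced by a map, hence strictly suboptimal.

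Your compactness fallback is at best a sketch: the step ``use the common-uniform construction together with uniqueness of the SBM to force $Y^\infty=Y$ a.s.'' would require that the conditional comonotonicity of $(Y^k,U)$ given $(X^k,X)$ survive the weak limit to give conditional comonotonicity of $(Y^\infty,U)$ given $X$, and disintegrations do not in general pass through weak convergence. The direct route via the $\mathcal{AW}_r$-optimal coupling is both shorter and complete.
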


\subsection{Monotonicity principle}

Recently the notion of martingale $C$-monotonicity in \cite{BaPa19} was introduced for WMOT to show stability of MOT.

\begin{definition}[Martingale $C$-monotonicity]
	We say that a Borel set $\Gamma \subset \R \times \mathcal P^1(\R)
        $ is \emph{martingale $C$-monotone} if for any $N \in \N$, any collection $(x_1,p_1),\ldots,(x_N,p_N) \in \Gamma$, and $q_1,\ldots,q_N \in \mathcal P^1(\R)$ such that $\sum_{i=1}^N p_i = \sum_{i=1}^N q_i$
	and $\int_\R y \, p_i(dy) = \int_\R y \, q_i(dy)$, we have
	\[  \sum_{i=1}^N C(x_i,p_i) \leq \sum_{i=1}^N C(x_i,q_i). \]
\end{definition}

So far, it was known that martingale $C$-monotonicity is a necessary optimality criterion in the following sense, c.f.\ \cite[Theorem 3.4]{BaPa19}:
let $\pi^\ast \in \Pi_M(\mu,\nu)$ be a martingale coupling which minimises \eqref{WMOT2}, then 
there is a martingale $C$-monotone set $\Gamma$ with
\begin{equation} \label{eq:martingale C monotone coupling}
(x,\pi_x) \in \Gamma\quad \text{for $\mu(dx)$-almost every $x$.}
\end{equation}

\begin{remark}\label{rk:finiteSupportOptimal} Conversely, if $\pi\in\Pi_M(\mu,\nu)$ is a finitely supported coupling of the form $\frac1N\sum_{i=1}^N\delta_{x_i}(dx)\,p_i(dy)$ for $x_1<\cdots<x_n\in\R$ and $p_1,\cdots,p_N\in\mathcal P^1(\R)$ and satisfies \eqref{eq:martingale C monotone coupling} for some martingale $C$-monotone set $\Gamma$, then it is optimal. Indeed, in that case $(x_1,p_1),\cdots,(x_N,p_N)\in\Gamma$ and any martingale coupling $\pi'\in\Pi_M(\mu,\nu)$ is of the form $\pi'(dx,dy)=\frac1N\sum_{i=1}^N\delta_{x_i}(dx)\,q_i(dy)$, where $q_1,\cdots,q_N\in\mathcal P^1(\R)$ are such that $\sum_{i=1}^Np_i=\sum_{i=1}^Nq_i$ and for each $i\in\{1,\cdots,N\}$, $\int_\R y\,p_i(dy)=x_i=\int_\R y\,q_i(dy)$. By definition of martingale $C$-monotonicity, we get
	\[
	\int_{\R\times\R}C(x,\pi_x)\,\mu(dx)=\frac1N\sum_{i=1}^NC(x_i,p_i)\le\frac1N\sum_{i=1}^NC(x_i,q_i)=\int_{\R^2}C(x,\pi'_x)\,\mu(dx),
	\]
	hence $\pi$ is optimal.
\end{remark}

However, the question remained open if any martingale coupling satisfying \eqref{eq:martingale C monotone coupling} is optimal.
Our stability results allow us to confirm that this is indeed the case.

\begin{theorem}[Sufficiency] \label{thm:3r}
	Let $r\ge 1$, $\mu,\nu\in\mathcal P^r(\R)$ be in convex order, and 
	$C\colon \R \times \mathcal P^r(\R) \to \R$ be a measurable cost function, 
	continuous in the second argument and such that there exists a finite constant $K$ which satisfies
	\[
	\forall(x,p)\in \R\times\mathcal P^r(\R),\quad C(x,p)\le K\left(1+|x|^r+\int_\R|y|^rp(dy)\right).
	\]
Let $\Gamma$ be martingale $C$-monotone and $\pi \in \Pi_M(\mu,\nu)$ be such that we have \eqref{eq:martingale C monotone coupling}. Then $\pi$ is optimal for \eqref{WMOT2}.
\end{theorem}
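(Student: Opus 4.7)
The plan is to reduce Theorem~\ref{thm:3r} to the finitely supported setting of Remark~\ref{rk:finiteSupportOptimal} and then conclude by stability. First I would set
$\Omega_0:=\{x\in\R:(x,\pi_x)\in\Gamma\text{ and }\int_\R y\,\pi_x(dy)=x\}$,
which has full $\mu$-mass by \eqref{eq:martingale C monotone coupling} together with $\pi\in\Pi_M(\mu,\nu)$. On an auxiliary probability space I would draw i.i.d.\ random variables $(X_i)_{i\in\N}$ of common law $\mu$, so that almost surely every $X_i\in\Omega_0$, and set
\[
\mu^N:=\tfrac1N\sum_{i=1}^N\delta_{X_i},\qquad \nu^N:=\tfrac1N\sum_{i=1}^N\pi_{X_i},\qquad \pi^N:=\tfrac1N\sum_{i=1}^N\delta_{X_i}\otimes\pi_{X_i}.
\]
Since each $\pi_{X_i}$ has barycenter $X_i$, automatically $\pi^N\in\Pi_M(\mu^N,\nu^N)$, and in particular $\mu^N\le_c\nu^N$.

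Next I would show that almost surely $(\mu^N,\nu^N)\to(\mu,\nu)$ in $\mathcal W_r\times\mathcal W_r$. For $\mu^N$ this is the standard empirical-measure statement, via the SLLN applied to $X_i$ and to $|X_i|^r$ together with a countable convergence-determining family. For $\nu^N$ I would apply the SLLN to the bounded integrable random variables $\langle\pi_{X_i},\phi\rangle$ for $\phi$ in a countable convergence-determining class (giving weak convergence) and to the integrable random variables $\int_\R|y|^r\,\pi_{X_i}(dy)$, whose mean equals $\int_\R|y|^r\,\nu(dy)<\infty$ (upgrading to $\mathcal W_r$). Fixing an $\omega$ realizing all these convergences and for which $X_i(\omega)\in\Omega_0$ for every $i$, I would observe that every atom $(X_i,\pi_{X_i})$ of $\pi^N$ lies in $\Gamma$, so the argument behind Remark~\ref{rk:finiteSupportOptimal}, adapted to allow repetitions among the $X_i$'s by grouping equal atoms with their multiplicities (an adaptation that is immediate from the definition of martingale $C$-monotonicity), yields the optimality of $\pi^N$. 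Consequently
\[
V^M_C(\mu^N,\nu^N)=\int_\R C(x,\pi^N_x)\,\mu^N(dx)=\frac1N\sum_{i=1}^N C(X_i,\pi_{X_i}).
\]

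Finally, I would let $N\to\infty$ in this equality. The growth bound on $C$ together with $\mu,\nu\in\mathcal P^r(\R)$ gives $\E[C(X_1,\pi_{X_1})^+]<\infty$, so the extended SLLN produces $\tfrac1N\sum_{i=1}^N C(X_i,\pi_{X_i})\to\int_\R C(x,\pi_x)\,\mu(dx)\in[-\infty,\infty)$ almost surely, while the $\mathcal P^r$-version of Theorem~\ref{thm:2Polish2easy} presented later as Theorem~\ref{thm:2Polish2} delivers $V^M_C(\mu^N,\nu^N)\to V^M_C(\mu,\nu)$. Combining both identities gives $\int_\R C(x,\pi_x)\,\mu(dx)=V^M_C(\mu,\nu)$, and hence the optimality of $\pi$. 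The main obstacle I foresee is verifying that the stability result can indeed be invoked under the precise hypotheses of Theorem~\ref{thm:3r} (measurable $C$, continuity only in the second argument, one-sided growth, $\mathcal P^r$-framework); this is what forces the logical placement of the proof after the stability developments of Section~\ref{subsec:stability}. The empirical-measure and $C$-monotonicity ingredients are comparatively routine.
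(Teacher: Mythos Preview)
Your overall architecture---empirical approximation of $\mu$, optimality of $\pi^N$ via Remark~\ref{rk:finiteSupportOptimal}, SLLN for $\tfrac1N\sum C(X_i,\pi_{X_i})$, and then a stability/upper-semicontinuity step to pass to the limit---is exactly the paper's strategy. The handling of repeated atoms and of the extended SLLN is fine.

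The gap is precisely the one you flag at the end, and it is genuine. Theorem~\ref{thm:2Polish2} (and already the upper-semicontinuity Theorem~\ref{thm:2upp}, which is all you actually need) applies only under \ref{it:assumption A}/\ref{it:assumption Aa} (joint continuity of $C$) or \ref{it:assumption B}/\ref{it:assumption Bb} (strong convergence of the first marginals). Your cost $C$ is only Carath\'eodory, and empirical measures $\mu^N$ do \emph{not} converge strongly to a diffuse $\mu$, so neither alternative is available. Without an additional idea the argument does not close.

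The paper supplies that idea in two layers. First, by de~La~Vall\'ee~Poussin one picks $h\in\mathcal F_g^+(\R)$ with $\nu(h)<\infty$, so the sublevel sets $B_N=\{p:p(h)\le N\}$ are compact in $\mathcal P_g(\R)$; Lusin's theorem applied to $x\mapsto C(x,\cdot)|_{B_N}\in\mathcal C(B_N)$ then produces compacts $K^\varepsilon\subset\R$ with $\mu(K^\varepsilon)\ge 1-\varepsilon$ on which $C|_{K^\varepsilon\times\mathcal P_h(\R)}$ is jointly continuous. On each $K^\varepsilon$ your empirical scheme goes through with assumption \ref{it:assumption A}, proving $\pi^\varepsilon:=\tfrac{1}{\mu(K^\varepsilon)}\pi|_{K^\varepsilon\times\R}$ optimal for $V^M_C(\mu^\varepsilon,\nu^\varepsilon)$. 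Second, one lets $\varepsilon\to 0$: now $\mu^\varepsilon\to\mu$ \emph{strongly}, so Theorem~\ref{thm:2upp} applies under \ref{it:assumption B} to yield $\limsup_\varepsilon V^M_C(\mu^\varepsilon,\nu^\varepsilon)\le V^M_C(\mu,\nu)$, and optimality of $\pi$ follows. In short, the missing ingredient is a Lusin restriction to force joint continuity, followed by a second limiting step where strong convergence of the first marginal is automatic.
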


In turn Theorem \ref{thm:3} allows us to strengthen \cite[Lemma A.2]{BeJu16} and \cite[Theorem 1.3]{Gr16} by relaxing the required continuity of the cost function to conclude that a martingale coupling is optimal if it is concentrated on a finitely optimal set.
The notion of finite optimality was developed in the spirit of cyclical monotonicity for MOT in \cite{BeJu16} and \cite{Gr16}.
The notion of cyclical monotonicity (see for instance \cite{Vi09}) is a remarkable tool in the theory of OT, which allows to determine optimality of a coupling only by knowing its support.

\begin{definition}[Competitor]
	Let $\alpha=\mu\otimes\alpha_x \in \mathcal P^1(\R \times \R)$.
	We call $\alpha'=\mu'\otimes\alpha'_x \in \mathcal P^1(\R \times \R)$ a competitor of $\alpha$, if
	\[
	\mu=\mu',\quad\textcolor{black}{\int_{x\in\R}\alpha_x(dy)\mu(dx)=\int_{x\in\R}\alpha'_x(dy)\mu'(dx)}\quad\text{and}\quad\int_\R y \, \alpha_x(dy) = \int_\R y \, \alpha'_x(dy), \quad \mu(dx)\text{-a.e.} \]
\end{definition}

\begin{definition}[Finite optimality]
	Let $c \colon \R \times \R\to\R$ be a cost function.
	We say that a Borel set $\tilde \Gamma \subset \R \times \R$ is \emph{finitely optimal} for $c$ if for every probability measure $\alpha \in \mathcal P(\R\times\R)$ finitely supported on $\tilde \Gamma$, we have 
	\[
	\int_{\R\times\R} c(x,y) \, \alpha(dx,dy) \leq \int_{\R\times\R} c(x,y) \,\alpha'(dx,dy),
	\]
	for every competitor $\alpha'$ of $\alpha$.
\end{definition}
\begin{corollary}[Monotonicity principle for MOT] \label{cor:finite optimalityr}
	Let $r\ge 1$, $\mu,\nu\in\mathcal P^r(\R)$ be in convex order,
	$c\colon \R \times \R \to \R$ be measurable and such that $y\mapsto c(x,y)$ is continuous for all $x\in\R$ and $\sup_{(x,y)\in\R^2}\frac{|c(x,y)|}{1+|x|^r+|y|^r}<\infty$.	
	Then $\pi \in \Pi_M(\mu,\nu)$ 
	 is optimal for \eqref{MOT}
	if and only if it
	is 
	concentrated on a finitely optimal set.
      \end{corollary}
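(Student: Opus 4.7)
The ``only if'' direction is \cite[Lemma A.2]{BeJu16} (or \cite[Theorem 1.3]{Gr16}), which shows that any optimal MOT coupling is concentrated on a finitely optimal set under just Borel measurability of $c$, so the plan is to invoke that result. The ``if'' direction is the novel content, and the plan is to deduce it from Theorem~\ref{thm:3r}. Set $C(x,p):=\int_\R c(x,y)\,p(dy)$; the continuity of $y\mapsto c(x,y)$ and the $r$-growth bound on $c$ imply that $C$ is $\mathcal W_r$-continuous in its second argument on $\mathcal P^r(\R)$ and satisfies the growth bound needed in Theorem~\ref{thm:3r}, while $V^M_C(\mu,\nu)$ coincides with the MOT value for $c$.

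Let $\tilde\Gamma$ be a Borel finitely optimal set supporting $\pi$, and write $\tilde\Gamma_x:=\{y\in\R:(x,y)\in\tilde\Gamma\}$. The natural candidate set for Theorem~\ref{thm:3r} is
\[
\Gamma := \bigl\{(x,p)\in\R\times\mathcal P^r(\R)\colon p(\tilde\Gamma_x)=1\bigr\},
\]
for which $(x,\pi_x)\in\Gamma$ at $\mu$-a.e.\ $x$ is immediate by disintegration of $\pi$. What remains to be shown, and is the heart of the argument, is that $\Gamma$ is martingale $C$-monotone.

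Given $(x_i,p_i)_{i=1}^N\in\Gamma$ and competitors $(q_i)_{i=1}^N$, the strategy is to construct finitely supported approximations $p_i^n,q_i^n$ that are themselves competitors, with $p_i^n$ still concentrated on $\tilde\Gamma_{x_i}$, and to invoke finite optimality of $\tilde\Gamma$ at each level $n$ before passing to the limit. The bookkeeping device will be the joint ``transfer'' measure
\[
\gamma(dy,i,j):=\frac{1}{N^2}\,F_i(y)\,G_j(y)\,\nu_0(dy)
\]
on $\R\times\{1,\ldots,N\}^2$, where $\nu_0:=\frac{1}{N}\sum_{i=1}^Np_i=\frac{1}{N}\sum_{i=1}^Nq_i$, $F_i:=dp_i/d\nu_0$, $G_j:=dq_j/d\nu_0$, which satisfies $\sum_j\gamma(\cdot,i,j)=\frac{1}{N}p_i$ and $\sum_i\gamma(\cdot,i,j)=\frac{1}{N}q_j$. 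Fixing $n$ and a dyadic partition $(I_k^n)_k$ of $\R$ of mesh $2^{-n}$, the plan is to replace, on each cell $I_k^n\times\{i\}\times\{j\}$, the measure $\gamma$ by a two-atom distribution supported on $\tilde\Gamma_{x_i}\cap I_k^n$ with the same total mass and the same barycenter. Such a selection exists because the original barycenter lies in the closed convex hull of $\tilde\Gamma_{x_i}\cap I_k^n$ ($F_i G_j$ vanishes outside $\tilde\Gamma_{x_i}$, $\nu_0$-a.e.) and a measurable choice can be made by standard selection theorems. The rescaled marginals $p_i^n:=N\sum_j\gamma^n(\cdot,i,j)$ and $q_j^n:=N\sum_i\gamma^n(\cdot,i,j)$ are then finitely supported with $p_i^n$ concentrated on $\tilde\Gamma_{x_i}$, $\sum_ip_i^n=\sum_jq_j^n$ exactly, and $\int y\,p_i^n=\int y\,p_i$, $\int y\,q_j^n=\int y\,q_j$ exactly. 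Applying finite optimality of $\tilde\Gamma$ to $\alpha^n:=\frac{1}{N}\sum_i\delta_{x_i}\otimes p_i^n$ and its competitor $\alpha'^n:=\frac{1}{N}\sum_i\delta_{x_i}\otimes q_i^n$ yields $\sum_iC(x_i,p_i^n)\le\sum_iC(x_i,q_i^n)$; since $p_i^n\to p_i$ and $q_i^n\to q_i$ in $\mathcal W_r$ (weak convergence plus $r$-moment convergence by the standard cell-shrinking argument) and $C(x,\cdot)$ is $\mathcal W_r$-continuous, the passage $n\to\infty$ gives $\sum_iC(x_i,p_i)\le\sum_iC(x_i,q_i)$. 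Theorem~\ref{thm:3r} applied to $(\pi,\Gamma)$ then concludes.

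The main obstacle is this joint discretization, which must simultaneously enforce (a) the support constraint $p_i^n(\tilde\Gamma_{x_i})=1$, (b) the individual barycenters of the $p_i^n$ and $q_j^n$ \emph{exactly}, and (c) the identity $\sum_ip_i^n=\sum_jq_j^n$. The transfer measure $\gamma$ couples the two decompositions via the common reference $\nu_0$, and using two atoms per cell (rather than one) provides exactly the flexibility needed to preserve barycenters while remaining inside $\tilde\Gamma_{x_i}$; a single common representative point could not in general lie in every $\tilde\Gamma_{x_i}$ simultaneously. The Borel measurability of the selection and the handling of the tails when the $p_i, q_j$ are not compactly supported are routine but require care.
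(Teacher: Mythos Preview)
Your strategy is correct and takes a genuinely different route from the paper's. Both proofs define $C(x,p)=\int c(x,y)\,p(dy)$, set $\Gamma=\{(x,p):p(\tilde\Gamma_x)=1\}$, and reduce sufficiency to showing $\Gamma$ is martingale $C$-monotone so that Theorem~\ref{thm:3r} applies. The difference lies in how the finitely supported competitors $(p_i^n,q_i^n)$ are produced. The paper first approximates each $p_i$ by a finitely supported $p_i^k$ concentrated on $\tilde\Gamma_{x_i}$ via the elementary quantile construction of Lemma~\ref{lem:approxsupfinwrho}, and then obtains the matching $q_i^k$ by invoking Theorem~\ref{thm:1b} (the adapted-Wasserstein approximation of martingale couplings, which is the main theorem of the companion paper \cite{BeJoMaPa21a}), applied to the martingale coupling $\frac1N\sum_i\delta_{y_i}\otimes q_i$ between $\frac1N\sum_i\delta_{y_i}$ and $\frac1N\sum_i p_i$; a translation trick is used to separate possibly coinciding $y_i$. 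Your transfer-measure $\gamma$ and cell-wise two-atom replacement produce both sequences simultaneously and preserve the barycenters and the sum constraint by construction, so you avoid Theorem~\ref{thm:1b} altogether. This is the more elementary and self-contained argument; the paper's version is shorter on the page only because it outsources the hard step to \cite{BeJoMaPa21a}.

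Two minor points. First, no measurable selection theorem is needed: once you restrict to finitely many cells there are only finitely many triples $(k,i,j)$, and for each you simply pick any admissible pair of atoms. Second, the tail handling is less routine than you indicate when $r>1$. With the full dyadic partition you get $\mathcal W_r(p_i^n,p_i)\le 2^{-n}$ but $p_i^n$ need not be finitely supported; with finitely many cells plus two semi-infinite tail cells, the two-atom replacement on the tail preserves mass and barycenter but gives no a~priori control on the $r$-th moment (you can make $a_+$ arbitrarily large). A clean fix is to handle each tail cell with a finitely supported approximant obtained from Lemma~\ref{lem:approxsupfinwrho} together with a single small-mass correction atom in $\tilde\Gamma_{x_i}$ to restore the exact barycenter; this keeps everything finitely supported, preserves the constraints, and yields $\mathcal W_r$-convergence. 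With that adjustment your argument goes through.
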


\subsection{Organization of the paper}
In Section \ref{sec:main results2} we introduce a number of concepts that are needed later on and state our main results on  the stability of the WOT and the WMOT problems in full strength. 
Section \ref{sec:proofs} is devoted to the proofs. Subsection \ref{sec:stability} consists of the unified proof of the stability of the WOT and the WMOT problems. Subsections \ref{sec:prvix} and \ref{sec:prstr} respectively address the proofs of the stability of the VIX superreplication price and of the stretched Brownian motion. Subsection \ref{sec:MartingaleMonotonicity} consists in showing that martingale $C$-monotonicity is sufficient for optimality for the WMOT problem. 
Finally several auxiliary  lemmas are collected in the Appendix \ref{sec:AppendixApplicationPaper}.

\section{Notation and comprehensive stability result}
\label{sec:main results2}

\subsection{Adapted weak topologies}

The usual weak topology is sometimes not appropriate  to handle settings where the time structure and flow of information play a distinct role.
In particular in the context of mathematical finance, this topology is too weak to provide continuity of sequential decision making problems such as optimal stopping and utility maximization.
Remarkably, in discrete time there exists a canonical extension of the weak topology which adequately captures the temporal structure of stochastic processes, the adapted weak topology \cite{BaBaBeEd19b}.

Let $r \in [1,+\infty)$.
In our setting, the adapted topology (of index $r$) is induced by the adapted Wasserstein distance $\mathcal{AW}_r$ of index $r$ defined for all probabilities $\pi,\pi'\in\mathcal P^r(X\times Y)$ with first marginals $\mu,\mu' \in \mathcal P^r(X)$ by
\[
\mathcal{AW}_r(\pi,\pi'):=\inf_{\chi\in\Pi(\mu,\mu')}\left(\int_{X\times X}\left(d_X^r(x,x')+\mathcal W_r^r(\pi_x,\pi'_{x'})\right)\,\chi(dx,dx')\right)^{\frac1r},
\]
where $(\pi_x)_{x \in X}$ and $(\pi'_x)_{x \in X}$ denote disintegrations of $\pi$ and $\pi'$, respectively, w.r.t.\ the first coordinate.
In the following we may occassionally write $\mu \otimes \pi_x$ in lieu of $\pi$, and use this notation to define probabilities on $X \times Y$.
The adapted Wasserstein distance satisfies
\begin{equation}\label{AWr=Wr}
\mathcal{AW}_r(\pi,\pi')=\mathcal W_r(J(\pi),J(\pi')),
\end{equation}
see for instance \cite{BaBeEdPi17}, where $J$ is the embedding map from $\mathcal P(X\times Y)$ to $\mathcal P(X\times\mathcal P(Y))$, namely
\begin{equation}\label{defJ}
J:\mathcal P(X\times Y)\ni\pi=\mu\otimes\pi_x\mapsto\mu(dx)\,\delta_{\pi_x}(dp)\in\mathcal P(X\times\mathcal P(Y)).
\end{equation}

In the companion paper  \cite{BeJoMaPa21a} we prove that any coupling whose marginals are approximated by probability measures can be approximated by couplings with respect to the adapted Wasserstein distance (see Proposition \ref{prop:1b} below). 

\subsection{An extension of the weak and adapted topologies}

For $r\ge1$, the Wasserstein distance $\mathcal W_r$ is widely used to measure the distance between two probability measures with finite $r$-th moment.
In order to measure the distance between two couplings, one could also use the stronger adapted Wasserstein distance for reasons discussed above. Despite being very handy, those distances sometimes lack topological convenience. For example, the $\mathcal W_r$-balls $\{p\in\mathcal P^r(X)\colon\mathcal W_r(p,\delta_{x_0})\le R\}$, $R>0$, are not compact for the $\mathcal W_r$-distance topology.
Since the proof of sufficiency of martingale $C$-monotonicity (see Section~\ref{sec:MartingaleMonotonicity} below) relies on a compactness argument, we choose in the present paper to work with a finer topology.
We give the definition here as well as some insight to understand its basic properties. All proofs and technical details are deferred to Section~\ref{sec:extensionPrPf} below.


\begin{definition}
	\label{def:Pf}
	Let $f \colon X \to [1,+\infty)$ be continuous. We consider the space 
	\[
	\mathcal P_f(X) = \left\{p \in \mathcal P(X) \colon p(f) < +\infty\right\}.
	\]
	
	We equip $\mathcal P_f(X)$ with the topology induced by the following convergence: a sequence $(p_k)_{k \in \N} \in
	\mathcal P_f(X)^\N$ converges in $\mathcal P_f(X)$ to $p$ if and only if one of the two following equivalent assertions is satisfied:
	\begin{enumerate}[(i)]
		\item $p_k\underset{k\to+\infty}{\longrightarrow}p$ in $\mathcal P(X)$ endowed with the weak convergence topology and $p_k(f)\underset{k\to+\infty}{\longrightarrow}p(f)$.
		\item $p_k(h)\underset{k\to+\infty}{\longrightarrow} p(h)$ for all $h\in \Phi_f(X):= \{h \in \mathcal C(X) \colon \exists\alpha>0,\ \forall x\in X,\ \vert h(x)\vert\le\alpha f(x) \}$ with $\mathcal C(X)$ denoting the space of continuous functions from $X$ to $\R$.
	\end{enumerate}
\end{definition}

Unless explicitly stated otherwise, $\mathcal P(X)$ is endowed with the weak convergence topology; for $r\ge1$, $\mathcal P^r(X)$ is endowed with the $\mathcal W_r$-distance topology; for $f:X\to[1,+\infty)$ continuous, $\mathcal P_f(X)$ is endowed with the topology induced by the convergence introduced in Definition~\ref{def:Pf}. When $f$ is the map $x\mapsto1+d_X^r(x,x_0)$, then $\mathcal P_f(X)=\mathcal P^r(X)$ and the two topologies coincide.
Hence the reader who is not willing to consider this extension may completely disregard it and consistently view $\mathcal P_f(X)$ as the usual Wasserstein space $\mathcal P^r(X)$.

We will mainly address convergences of probability measures from a topological point of view. However it will sometimes prove useful to consider the metric $\overline{\mathcal W}_f$ defined on $\mathcal P_f(X)$ by
\begin{equation}\label{defWbarf}
\forall p,q\in\mathcal P_f(X),\quad\overline{ \mathcal W }_f(p,q) := \sup_{\substack{h \colon X \to [-1,1], \\ h \text{ is
			$1$-Lipschitz}}} (p(fh) - q(fh)),
\end{equation}
which is a complete metric compatible with the topology on $\mathcal P_f(X)$.

A continuous function $g:Y\to[1,+\infty)$ can naturally be lifted to a continuous function $\hat g:\mathcal P_g(Y)\to[1,+\infty)$ by setting
\begin{equation}\label{defhatg}
\forall p\in\mathcal P_g(Y),\quad\hat g(p)=p(g).
\end{equation}

A convenient aspect of this topology is that the spaces $\mathcal P_{\hat g}(\mathcal P(Y))$ and $\mathcal P_{\hat g}(\mathcal P_g(Y))$ and their respective topologies, a priori different, coincide. If moreover $\mathcal P_g(Y)$ is endowed with the metric $\overline{\mathcal W}_g$, then those topological spaces are also equal to $\mathcal P^1(\mathcal P_g(Y))$,  with $(1,\mathcal P_g(Y),\overline{\mathcal W}_g)$ replacing $(r,X,d_X)$. Therefore one can freely switch between the topological spaces $\mathcal P_{\hat g}(\mathcal P(Y))$, $\mathcal P_{\hat g}(\mathcal P_g(Y))$ and $\mathcal P^1(\mathcal P_g(Y))$.

It is also possible to extend the adapted weak topology in the spirit of \eqref{AWr=Wr}. Recall the map $J$ defined by \eqref{defJ} which embeds $\mathcal P(X\times Y)$ into $\mathcal P(X\times\mathcal P(Y))$. For two real-valued functions $f$ and $g$ respectively defined on $X$ and $Y$, we denote by $f\oplus g$ the map $X\times Y\ni(x,y)\mapsto f(x)+g(y)$.

\begin{definition}\label{AWf+gdef} Let $f:X\to[1,+\infty)$ and $g:Y\to[1,+\infty)$ be continuous. For $k\in\N$, let $\mu^k,\mu\in\mathcal P_f(X)$, $\nu^k,\nu\in\mathcal P_g(Y)$, $\pi^k\in\Pi(\mu^k,\nu^k)$ and $\pi\in\Pi(\mu,\nu)$. We say that $(\pi^k)_{k\in\N}$ converges in $\mathcal{AW}_{f\oplus g}$ to $\pi$ if one of the two following equivalent assertions is satisfied:
	\begin{enumerate}[(i)]
		\item\label{it:JpiktoJpif+g} $J(\pi^k)\underset{k\to+\infty}{\longrightarrow}J(\pi)$ in $\mathcal P_{f\oplus\hat g}(X\times\mathcal P(Y))$.
		\item $J(\pi^k)\underset{k\to+\infty}{\longrightarrow}J(\pi)$ in $\mathcal P(X\times\mathcal P(Y))$, $\mu^k(f)\underset{k\to+\infty}{\longrightarrow}\mu(f)$ and $\nu^k(g)\underset{k\to+\infty}{\longrightarrow}\nu(g)$.
	\end{enumerate}
\end{definition}

Again a useful fact is that $\mathcal P_{f\oplus\hat g}(X\times\mathcal P(Y))$ and $\mathcal P_{f\oplus\hat g}(X\times\mathcal P_g(Y))$ and their respective topologies are equal, hence we can rephrase \ref{it:JpiktoJpif+g} as 
\[ J(\pi^k)\underset{k\to+\infty}{\longrightarrow}J(\pi) \text{ in } \mathcal P_{f\oplus\hat g}(X\times\mathcal P_g(Y)). \]
When $f(x)= 1+d_X^r(x,x_0)$ and $g(y)=1+d_Y^r(y,y_0)$, then $(\pi^k)_{k\in\N}$ converges in $\mathcal{AW}_{f\oplus g}$ to $\pi$ if and only if it converges in $\mathcal{AW}_r$. Once again, the reader may skip this extension and consider as she wishes that convergences in $\mathcal{AW}_{f\oplus g}$ mean convergences in $\mathcal{AW}_r$.

\subsection{Stability}\label{subsec:stability}

The continuous dependence of the WOT and WMOT problems, or of optimal transport problems in general, on the marginals is a crucial property:
often, these problems are computationally solvable when the marginals are finitely supported. It is therefore natural to discretise the marginals and solve discretised versions.
This approach works only if we know that the discretised problem sufficiently converges to the original one.
Moreover, in practice marginals are only approximately known and usually derived from noisy data, which again emphasizes the importance of stability.
Therefore we are interested in the continuity of the maps $(\mu,\nu)\mapsto V_C(\mu,\nu)$ and $(\mu,\nu)\mapsto V^M_C(\mu,\nu)$. First we derive the lower semicontinuity of those maps.

Recall that sequence $(\mu^k)_{k\in\N}$ of probability measures on $X$ is said to converge strongly to some $\mu\in\mathcal P(X)$ if and only if for any measurable subset $A\subset X$, $\mu^k(A)$ converges to $\mu(A)$ as $k$ goes to $+\infty$. 
\textcolor{black}{
If $X\subset \R$,  then the Borel $\sigma$-field $\mathcal B(X)$ of $X$ for the induced topology (the coarsest topology which makes the canonical injection $X\ni x\mapsto \iota(x)=x\in\R$ continuous) contains $\{A\cap X: A\in\mathcal B(\R)\}$ by measurability of $\iota$. Since $\{A\cap X: A\in\mathcal B(\R)\}$ is a $\sigma$-field which contains the open subsets of $X$ for the induced topology and therefore $\mathcal B(X)$, one has $\mathcal B(X)=\{A\cap X: A\in\mathcal B(\R)\}$. For $\mu\in{\mathcal P}(X)$, one may define $\tilde \mu\in{\mathcal P}(\R)$ by \begin{equation}\label{defmutildemu}
  \forall A\in\mathcal B(\R),\;\tilde\mu(A)=\mu(A\cap X)
\end{equation}($\tilde \mu$ is the image of $\mu$ by $\iota$). By the characterization of the weak convergence through open sets in the Portmanteau theorem, a sequence $(\mu^k)_{k\in\N}$ converges weakly to $\mu$ in $\mathcal P(X)$ if and only if the sequence $(\tilde \mu^k)_{k\in\N}$ converges weakly to $\tilde \mu$ in ${\mathcal P}(\R)$. When $X\in\mathcal B(\R)$, then $\mathcal B(X)\subset\mathcal B(\R)$ and if $\nu\in\mathcal P(\R)$ is such that $\nu(X)=1$ then the restriction $\nu|_{\mathcal B(X)}$ of $\nu$ to $\mathcal B(X)$ belongs to $\mathcal P(X)$ and is such that $\widetilde{\nu|_{\mathcal B(X)}}=\nu$.   The subset $X\subset \R$ is called a Polish subspace of $\R$ if it Polish for the induced topology. By Alexandrov's theorem, $X$ is a Polish subspace of $\R$ if and only if it is a countable intersection of open subsets of the real line, which ensures that $X\in\mathcal B(\R)$. 
We denote by $\mathcal F^{|\cdot|}(X)$ the set of continuous functions $f \colon X \to [1,+\infty)$ such that $\forall x\in X$, $f(x)\ge|x|$. 
 When $\mu\in\mathcal P_f(X)$ and $\nu\in\mathcal P_g(Y)$ with $X$ and $Y$ Polish subspaces of $\R$ and $f\in\mathcal F^{|\cdot|}(X)$, $g\in\mathcal F^{|\cdot|}(Y)$, then we say that $\mu\le_c\nu$ if the image $\tilde \mu$ of $\nu$ by the injection from $Y$ to $\R$ dominates for the convex order the image $\tilde\nu$ of $\nu$ by the injection from $X$ to $\R$. Then $\Pi_M(\tilde\mu,\tilde\nu)\neq\emptyset$ by Strassen's theorem and for any $\tilde \pi\in \Pi_M(\tilde\mu,\tilde\nu)$ the probability measure defined by $\pi(A)=\tilde\pi(A)$ for all Borel subset of $X\times Y$ belongs to
 $$\Pi_M(\mu,\nu):= \left\{ \pi = \mu \otimes \pi_x \in \Pi(\mu,\nu) \colon \int_{Y} y \, \pi_x(dy) = x \, \mu\text{-a.s.} \right\}.$$
 For a measurable cost function on $X\times\mathcal P_g(Y)$, we define
\begin{equation}\label{WMOT22}
\tag{WMOT}
V^M_C(\mu,\nu):=\inf_{\pi\in\Pi_M(\mu,\nu)}\int_{X}C(x,\pi_x)\,\mu(dx).
\end{equation}
and denote by $\tilde C$ the cost function on $X\times\mathcal P_g(Y)$ defined by \begin{equation}
   \tilde C(x,p)=C(x,p)\1_{\left\{x=\int_Y y \, p(dy)\right\}}+\infty\1_{\left\{x\neq \int_Y y \, p(dy)\right\}}.\label{deftildeC}
 \end{equation}}

In the following statements which address both the WOT and the WMOT problems, namely Theorems \ref{thm:2Polish}, \ref{thm:2upp} and \ref{thm:2Polish2}, the necessary changes in the martingale case are added in parentheses.

\begin{theorem}\label{thm:2Polish} Let $X$ and $Y$ be Polish spaces {\color{black}(resp.\ $X$ and $Y$ Polish subspaces of $\R$)}, $f:X\to[1,+\infty)$ and $g:Y\to[1,+\infty)$ be continuous \textcolor{black}{(resp.\ $f \in \mathcal F^{|\cdot|}(X)$ and $g \in \mathcal F^{|\cdot|}(Y)$)}, $C:X\times\mathcal P_g(Y)\to\R \cup \{+\infty\}$ be  measurable. We assume that $C$(resp. $\tilde C$) is convex and lower semicontinuous in the second argument, and such that there exists a constant $K>0$ which satisfies for all $(x,p)\in X\times\mathcal P_g(Y)$
	\begin{equation}\label{CgrowthexponentrWOT1}
	C(x,p)\ge -K\left(f(x)+\int_Yg(y)\,p(dy)\right) \quad\left(\text{resp.}\quad \tilde C(x,p)\ge -K\left(f(x)+\int_Yg(y)\,p(dy)\right)\right).
      \end{equation}
      For $\mu\in\mathcal P_f(X)$ and $\nu\in\mathcal P_g(Y)$ (resp. with $\mu\le_c \nu$), there exists $\pi^*\in\Pi(\mu,\nu)$ which minimises $V_C(\mu,\nu)$ (resp. $V^M_C(\mu,\nu)$) and  if $C$ is strictly convex in the second argument and $V_C(\mu,\nu)$ (resp. $V^M_C(\mu,\nu)$) is finite, then $V_C(\mu,\nu)$ (resp. $V^M_C(\mu,\nu)$) admits a unique minimiser.
      
	For $k\in\N$, let $\mu^k\in\mathcal P_f(X)$ and $\nu^k\in\mathcal P_g(Y)$ (resp. with $\mu^k\le_c\nu^k$) converge in $\mathcal P_f(X)$ and $\mathcal P_g(Y)$ as $k\to+\infty$ to $\mu$ and $\nu$, respectively.
	Suppose moreover that one of the following holds true:
	\begin{enumerate}[label = (\Alph*)]
		\item\label{it:assumption A lsc} $C$ (resp. $\tilde C$) is lower semicontinuous in both arguments.
		\item\label{it:assumption B lsc} $\mu^k$ converges strongly to $\mu$ as $k\to+\infty$.
	\end{enumerate}
	Then \begin{equation}\label{stabilityVM}
		V_C(\mu,\nu)\le\liminf_{k\to+\infty}V_C(\mu^k,\nu^k)\quad\left(\text{resp.}\quad V^M_C(\mu,\nu)\le\liminf_{k\to+\infty}V^M_C(\mu^k,\nu^k) \right).
		\end{equation}
	\end{theorem}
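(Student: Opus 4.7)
My plan is to treat existence, uniqueness and lower semicontinuity in a unified way, passing through the lift $J$ of \eqref{defJ} which rewrites $\int_X C(x,\pi_x)\,\mu(dx) = \int_{X\times\mathcal P_g(Y)} C(x,p)\,J(\pi)(dx,dp)$ and linearises the cost in the measure variable.

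For existence and uniqueness at fixed $(\mu,\nu)$, I would note that $\Pi(\mu,\nu)$ is weakly compact in $\mathcal P(X\times Y)$, and in the martingale case $\Pi_M(\mu,\nu)$ is non-empty by Strassen's theorem and weakly closed: since $g\in\mathcal F^{|\cdot|}(Y)$ and $\nu(g)<+\infty$, the map $y\mapsto y$ is uniformly integrable along any weakly convergent sequence in $\Pi(\mu,\nu)$, so the identity $\int y\,\pi_x(dy)=x$ passes to weak limits. A minimising sequence then has a weak cluster point which, by the lsc step below (applied with constant marginals), is a minimiser. Uniqueness under strict convexity of $C(x,\cdot)$ is standard: the midpoint of two distinct minimisers is still admissible by convexity of $\Pi(\mu,\nu)$ and $\Pi_M(\mu,\nu)$ and would strictly reduce the cost on a $\mu$-positive set.

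For the stability claim proper, I would take (near-)optimisers $\pi^{k,\ast}$ for $V_C(\mu^k,\nu^k)$. Tightness of $(\mu^k)_k$ and $(\nu^k)_k$ in $\mathcal P(X)$, $\mathcal P(Y)$ yields tightness of $(\pi^{k,\ast})_k$, so I extract a weak cluster point $\pi^\ast\in\Pi(\mu,\nu)$ (the martingale identity being preserved as above, using $\nu^k(g)\to\nu(g)$). The core inequality
\[
\int_X C(x,\pi^\ast_x)\,\mu(dx) \le \liminf_{k\to\infty}\int_X C(x,\pi^{k,\ast}_x)\,\mu^k(dx)
\]
is then the target. Under assumption \ref{it:assumption A lsc}, where $C$ is jointly lsc and convex in $p$, I would enlarge the admissible set to all $Q\in\mathcal P(X\times\mathcal P_g(Y))$ with first marginal $\mu$ and ``barycentric'' second marginal $\nu$; convexity of $C(x,\cdot)$ and Jensen's inequality show that this enlarged infimum equals $V_C(\mu,\nu)$, but the enlarged class is closed under weak convergence on $X\times\mathcal P_g(Y)$ and the lifted functional is lsc there by Portmanteau, using the lower bound $-K(f\oplus\hat g)$ from \eqref{CgrowthexponentrWOT1} together with the moment convergences $\mu^k(f)\to\mu(f)$ and $\nu^k(g)\to\nu(g)$. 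Under assumption \ref{it:assumption B lsc}, the strong convergence $\mu^k\to\mu$ allows a Lusin-type approximation: one replaces $C$, up to an error controlled by \eqref{CgrowthexponentrWOT1}, by a function continuous in $x$ on a set of large $\mu^k$-measure uniformly in $k$, reducing to case \ref{it:assumption A lsc}.

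The main obstacle is the lower-semicontinuity step under assumption \ref{it:assumption A lsc}: weak convergence of $\pi^{k,\ast}$ to $\pi^\ast$ in $\mathcal P(X\times Y)$ does \emph{not} imply $J(\pi^{k,\ast})\to J(\pi^\ast)$ in $\mathcal P(X\times\mathcal P_g(Y))$, so one cannot directly apply Portmanteau in the lifted space to the sequence $J(\pi^{k,\ast})$. The remedy is to work with the enlarged relaxed problem in $\mathcal P(X\times\mathcal P_g(Y))$, where the lifted near-minimisers $Q^k$ do converge in the right topology by compactness, and where the equality with $V_C$ is guaranteed by convexity of $C(x,\cdot)$. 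This is the technical core one needs to adapt from weak optimal transport to the martingale setting and to the $\mathcal P_g$-setting replacing the usual $\mathcal P^r$.
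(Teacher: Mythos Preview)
Your overall architecture---lift via $J$, relax to extended couplings $Q\in\mathcal P(X\times\mathcal P_g(Y))$, recover $V_C$ from the relaxed infimum by Jensen (convexity of $C(x,\cdot)$), and exploit compactness of the lifted objects---is exactly what the paper does; this is packaged there as Proposition~\ref{prop: C Pf lsc}. Likewise your existence and uniqueness arguments match the paper's. Your identification of the obstacle (weak convergence of $\pi^{k,\ast}$ does not give $J(\pi^{k,\ast})\to J(\pi^\ast)$) is precisely the point.

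Where you diverge is case~\ref{it:assumption B lsc}. The paper does \emph{not} reduce to case~\ref{it:assumption A lsc} via Lusin. Instead it observes that strong convergence $\mu^k\to\mu$ together with weak convergence $\pi^{k,\ast}\to\pi^\ast$ is exactly \emph{stable convergence} (Lemma~\ref{lem:convStable}), and that $\pi\mapsto\int_X C(x,\pi_x)\,\mu(dx)$ is lower semicontinuous for stable convergence whenever $C$ is merely lower Carath\'eodory (measurable in $x$, lsc in $p$), by a Portmanteau-type result (Lemma~\ref{lem:caratheodorylimits}\ref{it:convlowCaratheodory} and Proposition~\ref{prop: C Pf lsc}\ref{it:Cpix lsc stable}). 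This avoids any approximation of $C$.

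Your Lusin route, as written, has a gap. Lusin would make $x\mapsto C(x,\cdot)$ continuous on a large compact $K^\varepsilon$ as a map into some function space, but here $C(x,\cdot)$ is only lsc on $\mathcal P_g(Y)$, and the space of lsc functions is not a Polish target where Lusin applies. Even if you obtained continuity in $x$ on $K^\varepsilon$, separate continuity in $x$ plus lower semicontinuity in $p$ does not yield joint lower semicontinuity on $K^\varepsilon\times\mathcal P_g(Y)$, which is what case~\ref{it:assumption A lsc} needs. (The paper does use a Lusin argument elsewhere, in the proof of Theorem~\ref{thm:3}, but there $C$ is assumed \emph{continuous} in $p$ and a de la Vall\'ee Poussin trick provides compactness in the $p$-variable; neither ingredient is available under the hypotheses of Theorem~\ref{thm:2Polish}.) The stable-convergence route is both simpler and what actually closes the argument here.
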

\begin{remark}\label{remctildec}
  {\color{black} If $X$ and $Y$ are Polish subspaces of $\R$ and $C:X\times\mathcal P_g(Y)\to\R\cup\{+\infty\}$ with $g \in \mathcal F^{|\cdot|}(Y)$ is convex in its second argument (resp. is lower semicontinuous in either its second argument or in both arguments, resp. satisfies \eqref{CgrowthexponentrWOT1}), then so does $\tilde C$. Indeed, as $Y\ni y\mapsto y\in\R$ belongs to $\Phi_g(Y)$,  $\{(x,p)\in X \times\mathcal P_g(Y) \colon x=\int_Y y \, p(dy)\}$ is a closed subset of $X\times\mathcal P_g(Y)$ and for fixed $x\in X$, $\{p\in\mathcal P_g(Y) \colon x=\int_Y y\, p(dy)\}$ is convex.}
\end{remark}


To conclude with stability of the WOT problem, we derive upper semicontinuity. 
This relies on the following (trivial) extension of Proposition 2.3 in our companion paper \cite{BeJoMaPa21a}. 
This extension is then an easy consequence of the equivalent definitions stated in Definition \ref{AWf+gdef}.

\begin{proposition}\label{prop:1b}
	Let $f \colon X \to [1,+\infty)$ and $g \colon Y \to [1,+\infty)$ be continuous.
	Let $\mu^k \in \mathcal P_f(X)$, $\nu^k\in\mathcal P_g(Y)$, $k\in\N$, respectively converge to $\mu$ and $\nu$ in $\mathcal P_f(X)$ and $\mathcal P_g(Y)$ respectively.
	Then there is for any $\pi\in\Pi(\mu,\nu)$ a sequence of couplings $\pi^k\in\Pi(\mu^k,\nu^k)$, $k\in\N$ converging to $\pi$ in $\mathcal{AW}_{f\oplus g}$.
\end{proposition}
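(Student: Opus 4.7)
The plan is to reduce the proposition to the Wasserstein version established in \cite{BeJoMaPa21a} by a change of ground metric on $X$ and $Y$, and then to identify the resulting convergence with $\mathcal{AW}_{f\oplus g}$ through the equivalent characterisations of Definition \ref{AWf+gdef}.

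First I would introduce the new complete Polish metrics
\[
d^f(x,x') := (d_X(x,x') \wedge 1) + |f(x) - f(x')|, \qquad d^g(y,y') := (d_Y(y,y') \wedge 1) + |g(y) - g(y')|.
\]
Continuity of $f$ and $g$ ensures that $d^f$ and $d^g$ generate the original Polish topologies and remain complete. With these metrics, $\mathcal P^1(X, d^f) = \mathcal P_f(X)$ and $\mathcal P^1(Y, d^g) = \mathcal P_g(Y)$ as sets, and the corresponding $\mathcal W_1$-topologies agree with those of Definition \ref{def:Pf}; the key point is to check, for any weakly convergent sequence $p_k \to p$, that $\int |f(x) - f(x_0)|\,p_k(dx) \to \int |f(x) - f(x_0)|\,p(dx)$ is equivalent to $p_k(f) \to p(f)$. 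This follows from the decomposition $|f - f(x_0)| = f + f(x_0) - 2\min(f, f(x_0))$ together with weak continuity of the bounded continuous map $\min(f, f(x_0))$.

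Having identified $\mathcal W_1(X, d^f)$ with $\mathcal P_f(X)$ topologically, and similarly for $Y$, the hypotheses of \cite[Proposition 2.3]{BeJoMaPa21a} are met with $r = 1$ and the ground metrics $(d^f, d^g)$, producing couplings $\pi^k \in \Pi(\mu^k,\nu^k)$ such that $\mathcal{AW}_1^{(d^f,d^g)}(\pi^k, \pi) \to 0$. By the identity \eqref{AWr=Wr}, this yields $\mathcal W_1(J(\pi^k), J(\pi)) \to 0$ in the Wasserstein space over $X \times \mathcal P(Y,d^g)$, which in particular implies $J(\pi^k) \to J(\pi)$ weakly in $\mathcal P(X \times \mathcal P(Y))$. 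Combined with $\mu^k(f) \to \mu(f)$ and $\nu^k(g) \to \nu(g)$ from the hypotheses, characterisation (ii) of Definition \ref{AWf+gdef} delivers $\pi^k \to \pi$ in $\mathcal{AW}_{f \oplus g}$, as required. The only nontrivial obstacle is the metric-change identification $\mathcal W_1(X, d^f) = \mathcal P_f(X)$ together with its analogue on $Y$; once these are in place, the remainder is a bookkeeping invocation of the companion paper, confirming the authors' assertion that the extension is ``trivial''.
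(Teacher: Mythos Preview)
Your proof is correct and matches the paper's one-line approach, which simply invokes \cite[Proposition~2.3]{BeJoMaPa21a} and then characterisation~(ii) of Definition~\ref{AWf+gdef}. Your explicit change of metric $(d^f,d^g)$ is a clean way to make that reduction precise, and the identification of $\mathcal W_1(X,d^f)$ with $\mathcal P_f(X)$ via the decomposition $|f-f(x_0)| = f + f(x_0) - 2\min(f,f(x_0))$ is verified correctly.
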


In the martingale setting, we recall the main result of the companion paper \cite[Theorem 2.6]{BeJoMaPa21a}, namely that any martingale couplings whose marginals are approximated by probability measures in the convex order can be approximated by martingale couplings with respect to the adapted Wasserstein distance.
We invoke here a trivial extension of the aforementioned result which is again a direct consequence of the equivalent definitions stated in Definition \ref{AWf+gdef}. 

{\color{black}
\begin{theorem}\label{thm:1b}
	Let $X$ and $Y$ be Polish subspaces of $\R$, and let $f \in \mathcal F^{|\cdot|}(X)$ and $g \in \mathcal F^{|\cdot|}(Y)$.
	Let $\mu^k \in \mathcal P_f(X)$ and $\nu^k\in\mathcal P_g(Y)$, $k\in\N$, be in the convex order and respectively converge to $\mu$ and $\nu$ in $\mathcal P_f(X)$ and $\mathcal P_g(Y)$. 
	Let $\pi\in\Pi_M(\mu,\nu)$. 
	Then there exists a sequence of martingale couplings $\pi^k\in\Pi_M(\mu^k,\nu^k)$, $k\in\N$ converging to $\pi$ in $\mathcal{AW}_{f\oplus g}$.
\end{theorem}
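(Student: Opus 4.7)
My plan is to reduce the statement to the companion paper's approximation result \cite[Theorem 2.6]{BeJoMaPa21a} and then translate the resulting $\mathcal{AW}_1$-convergence into $\mathcal{AW}_{f\oplus g}$-convergence by applying the equivalence (i)$\Leftrightarrow$(ii) of Definition \ref{AWf+gdef}. The first preparatory observation is that any $f \in \mathcal F^{|\cdot|}(X)$ satisfies $f(x)\ge 1\vee|x|\ge \tfrac12(1+|x|)$, so the identity embeds $\mathcal P_f(X)$ continuously into $\mathcal P^1(X)$, and convergence in $\mathcal P_f(X)$ forces convergence in the Wasserstein distance $\mathcal W_1$; the same observation holds for $g$ on $Y$. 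Thus from the hypotheses I extract $\mu^k\to\mu$ and $\nu^k\to\nu$ in $\mathcal W_1$, together with the convex order $\mu^k\le_c\nu^k$, which is precisely the setting of \cite[Theorem 2.6]{BeJoMaPa21a} for $r=1$.

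Applying that theorem produces a sequence of martingale couplings $\pi^k\in\Pi_M(\mu^k,\nu^k)$ converging to $\pi$ in $\mathcal{AW}_1$. By identity \eqref{AWr=Wr} this is the same as $J(\pi^k) \to J(\pi)$ in $\mathcal W_1$ on $\mathcal P^1(X\times\mathcal P(Y))$, where $\mathcal P(Y)$ carries the $\mathcal W_1$-topology. Since $\mathcal W_1$-convergence is stronger than weak convergence both on $\mathcal P(Y)$ and, consequently, on $\mathcal P(X\times\mathcal P(Y))$, I deduce in particular the weak convergence $J(\pi^k)\to J(\pi)$ in $\mathcal P(X\times\mathcal P(Y))$ when $\mathcal P(Y)$ is given its weak topology.

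To conclude, the hypothesis $\mu^k\to\mu$ in $\mathcal P_f(X)$ directly gives $\mu^k(f)\to\mu(f)$, and symmetrically $\nu^k(g)\to\nu(g)$. All three requirements of condition (ii) of Definition \ref{AWf+gdef} are now in place, so $\pi^k\to\pi$ in $\mathcal{AW}_{f\oplus g}$, which is the desired convergence. I do not anticipate any real obstacle here: all the substantive work was carried out in \cite{BeJoMaPa21a}, and the present statement amounts to a bookkeeping upgrade from $\mathcal{AW}_1$ to $\mathcal{AW}_{f\oplus g}$ using the equivalent description of the topology provided by Definition \ref{AWf+gdef}, which is why the paper correctly describes the extension as trivial.
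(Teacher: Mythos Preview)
Your approach is essentially the paper's: reduce to \cite[Theorem 2.6]{BeJoMaPa21a} to obtain $\mathcal{AW}_1$-convergence, then upgrade to $\mathcal{AW}_{f\oplus g}$ via condition (ii) of Definition \ref{AWf+gdef}. The reduction using $f(x)\ge\tfrac12(1+|x|)$ and the final upgrade are both correct.

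There is, however, one point you skate over and that constitutes most of the paper's actual proof. The companion result \cite[Theorem 2.6]{BeJoMaPa21a} is stated for probability measures on $\R$, not on general Polish subspaces $X,Y\subset\R$. So one must first push $\mu^k,\nu^k,\pi$ forward to $\tilde\mu^k,\tilde\nu^k,\tilde\pi$ on $\R$ via the canonical injections, apply the companion theorem there to obtain $\tilde\pi^k\in\Pi_M(\tilde\mu^k,\tilde\nu^k)$ with $J(\tilde\pi^k)\to J(\tilde\pi)$ in $\mathcal P^1(\R\times\mathcal P^1(\R))$, and then restrict back to $\pi^k:=\tilde\pi^k|_{\mathcal B(X\times Y)}\in\Pi_M(\mu^k,\nu^k)$. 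The delicate step is the last one: you need weak convergence of $J(\pi^k)$ to $J(\pi)$ in $\mathcal P(X\times\mathcal P(Y))$, where $\mathcal P(Y)$ carries its own weak topology, not merely the subspace topology inherited from $\mathcal P(\R)$. The paper handles this by observing that the Prokhorov metric on $\mathcal P(Y)$ (for the real distance restricted to $Y$) makes every open subset of $X\times\mathcal P(Y)$ the preimage under $\iota_{X,\mathcal P(Y)}$ of an open subset of $\R\times\mathcal P(\R)$, and then invokes the open-set characterisation in Portmanteau's theorem. Your sentence ``$\mathcal W_1$-convergence is stronger than weak convergence both on $\mathcal P(Y)$ and, consequently, on $\mathcal P(X\times\mathcal P(Y))$'' hides exactly this identification of topologies; once you make it explicit, your argument and the paper's coincide.
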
}

{\color{black}\begin{proof}[Proof of Theorem \ref{thm:1b}]
Let for $\eta\in\mathcal P(X)$ (resp. $\mathcal P(Y)$) $\tilde\eta$ denote the image of $\eta$ by the injection $\iota_X:X\ni x\mapsto x\in\R$ (resp. $\iota_Y:Y\ni y\mapsto y\in\R$). By continuity of the injections, the sequences $(\tilde \mu^k)_{k\in\N}$ and $(\tilde\nu^k)_{k\in\N}$ respectively converge weakly to $\tilde\mu$ and $\tilde\nu$. Since $\iota_X\in\Phi_f(X)$ and $\iota_Y\in\Phi_g(Y)$, $\int_\R|x|\tilde\mu^k(dx)=\int_X|\iota_X(x)|\mu^k(dx)\underset{k\to\infty}\longrightarrow\int_X|\iota_X(x)|\mu(dx)=\int_\R|x|\tilde\mu(dx)$ and $\int_\R|y|\tilde\nu^k(dy)\underset{k\to\infty}\longrightarrow\int_\R|y|\tilde\nu(dy)$ and the convergences hold in $\mathcal P_1(\R)$. Let $\pi\in\Pi_M(\mu,\nu)$ and $\tilde\pi$ denote the image of $\pi$ by the injection from $X\times Y$ to $\R^2$. Then $\tilde \pi\in\Pi_M(\tilde \mu,\tilde \nu)$ and, by \cite[Theorem 2.6]{BeJoMaPa21a}, there is a sequence $\tilde \pi^k \in \Pi_M(\tilde \mu^k,\tilde \nu^k)$, $k \in \N$ such that $J(\tilde \pi^k)$ converges to $J(\tilde \pi)$ in $\mathcal P_1(\R\times\mathcal P_1(\R))$ as $k\to+\infty$.

    The restrictions $\pi^k=\tilde\pi^k|_{\mathcal B(X\times Y)}$ and  $\pi=\tilde\pi|_{\mathcal B(X\times Y)}$ respectively belong to $\Pi_M(\mu^k,\nu^k)$ and $\Pi_M(\mu,\nu)$ and are such that $J(\tilde \pi^k)$ is the image of $J(\pi^k)$ by the mapping $\iota_{X,\mathcal P(Y)}:X\times \mathcal P(Y)\ni (x,p)\mapsto (x,\tilde p)\in\R\times \mathcal P(\R)$. The open ball  in $X\times{\mathcal P}(Y)$ centred at $(x,p)$ with radius $\varepsilon>0$ for the sum of the usual real distance on $X$ and the Prokhorov metric with the usual real distance on ${\mathcal P}(Y)$ is the preimage  by $\iota_{X,\mathcal P(Y)}$ of the open ball centred at $(x,\tilde p)$ with radius $\varepsilon$ for the sum of the usual distance on $\R$ and the Prokhorov metric on ${\mathcal P}(\R)$. Since the induced topologies on $X$ and $Y$ are metrizable by the usual distance on the real line, and, as $X$ is separable for the induced topology, the Prokhorov metric metrisizes the weak convergence topology in ${\mathcal P}(X)$, we deduce that any open subset of $X\times \mathcal P(Y)$ is the preimage of an open subset of $\R\times \mathcal P(\R)$ by $\iota_{X,\mathcal P(Y)}$. By the open subset characterization of the weak convergence in Portmanteau's theorem, we deduce that $J(\pi^k)$ converges weakly to $J(\pi)$ in ${\mathcal P}(X\times{\mathcal P}(Y))$. With the convergence of the marginals $\mu^k$ to $\mu$ in $\mathcal P_f(X)$ and $\nu^k$ to $\nu$ in $\mathcal P_g(Y)$, we conclude in view of Definition \ref{AWf+gdef} that $\pi^k$ converges to $\pi$ in $\mathcal{AW}_{f\oplus g}$ as $k\to\infty$.
\end{proof}}


\begin{theorem}[Upper semicontinuity]\label{thm:2upp} Let $X$ and $Y$ be Polish spaces {\color{black}(resp.\ $X$ and $Y$ be Polish subspaces of $\R$)}.
	Let $f:X\to[1,+\infty)$ and $g:Y\to[1,+\infty)$ be continuous {\color{black}(resp. $f\in \mathcal F^{|\cdot|}(X)$ and $g \in \mathcal F^{|\cdot|}(Y)$)}, $C:X\times\mathcal P_g(Y)\to\{-\infty\}\cup\R$ be measurable, upper semicontinuous in the second argument, and such that there exists a constant $K>0$ which satisfies for all $(x,p)\in X\times\mathcal P_g(Y)$
	\begin{equation}\label{CgrowthexponentrWOT2}
	C(x,p)\le K\left(f(x)+\int_Yg(y)\,p(dy)\right).
	\end{equation}
	For $k\in\N$, let $\mu^k\in\mathcal P_f(X)$ and $\nu^k\in\mathcal P_g(Y)$ (resp.\ with $\mu^k\le_c\nu^k$) converge in $\mathcal P_f(X)$ and $\mathcal P_g(Y)$ as $k\to+\infty$ to $\mu$ and $\nu$ respectively. 	
	Suppose moreover that one of the following holds true:
	\begin{enumerate}[label = (\Alph*')]
		\item\label{it:assumption A} $C$ is upper semicontinuous in both arguments.
		\item\label{it:assumption B} $\mu^k$ converges strongly to $\mu$ as $k\to+\infty$.
	\end{enumerate}
	Then
	\begin{equation}\label{uscVM}
	\limsup_{k\to+\infty}V_C(\mu^k,\nu^k)\le V_C(\mu,\nu)\quad\left(\text{resp.}\quad\limsup_{k\to+\infty}V^M_C(\mu^k,\nu^k)\le V^M_C(\mu,\nu)\right).
	\end{equation}
\end{theorem}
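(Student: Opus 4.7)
My strategy is a recovery-sequence argument: I lift a nearly optimal coupling for the limit problem to a sequence of couplings whose cost-integrals upper-bound $V_C(\mu^k,\nu^k)$ (resp.\ $V^M_C(\mu^k,\nu^k)$), and then exploit upper semicontinuity of $C$ in the limit. Fix $\varepsilon>0$ and choose $\pi\in\Pi(\mu,\nu)$ (resp.\ $\pi\in\Pi_M(\mu,\nu)$, which is nonempty by Strassen since $\mu\le_c\nu$ is preserved in the limit by our integrability assumptions) satisfying $\int_X C(x,\pi_x)\,\mu(dx)\le V_C(\mu,\nu)+\varepsilon$ (resp.\ the martingale analogue). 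Applying Proposition \ref{prop:1b} in the non-martingale case (resp.\ Theorem \ref{thm:1b} in the martingale case), I obtain a sequence $\pi^k\in\Pi(\mu^k,\nu^k)$ (resp.\ $\pi^k\in\Pi_M(\mu^k,\nu^k)$) converging to $\pi$ in $\mathcal{AW}_{f\oplus g}$; by Definition \ref{AWf+gdef} this means $J(\pi^k)\to J(\pi)$ in $\mathcal P_{f\oplus\hat g}(X\times\mathcal P_g(Y))$. Since
\[
V_C(\mu^k,\nu^k)\le \int_X C(x,\pi^k_x)\,\mu^k(dx)=\int_{X\times\mathcal P_g(Y)}C(x,p)\,J(\pi^k)(dx,dp),
\]
the proof of \eqref{uscVM} reduces to bounding the lim sup of the right-hand side by $\int_X C(x,\pi_x)\,\mu(dx)$ and letting $\varepsilon\downarrow 0$.

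Under assumption \ref{it:assumption A}, $C$ is upper semicontinuous on all of $X\times\mathcal P_g(Y)$, and the growth bound \eqref{CgrowthexponentrWOT2} reads $C(x,p)\le K(f\oplus\hat g)(x,p)$, so $C$ is dominated by the weight controlling the topology of $\mathcal P_{f\oplus\hat g}$. The relevant Portmanteau-type statement---an easy extension of the classical one, where $\mathcal P_{f\oplus\hat g}$-convergence replaces weak convergence and the moment-convergence built into Definition \ref{def:Pf} absorbs the dominating weight---then yields
\[
\limsup_k \int_{X\times\mathcal P_g(Y)}C\,dJ(\pi^k)\le \int_{X\times\mathcal P_g(Y)}C\,dJ(\pi)=\int_X C(x,\pi_x)\,\mu(dx),
\]
which closes case \ref{it:assumption A}.

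For case \ref{it:assumption B}, $C$ is only upper semicontinuous in its second argument, but this is compensated by the strong (setwise) convergence $\mu^k\to\mu$ together with $\mu^k(f)\to\mu(f)$. My plan is to decompose
\[
\int_X C(x,\pi^k_x)\,\mu^k(dx)=\int_X C(x,\pi^k_x)\,\mu(dx)+\int_X C(x,\pi^k_x)\,(\mu^k-\mu)(dx),
\]
to show that the remainder vanishes by combining setwise convergence of $\mu^k$ with a uniform-integrability bound stemming from the growth condition and the convergence $\mu^k(f)+\nu^k(g)\to\mu(f)+\nu(g)$, and to estimate the lim sup of the first integral via a Fatou-type argument that exploits only the upper semicontinuity of $p\mapsto C(x,p)$ together with a $\mu$-a.e.\ convergence of $\pi^k_x$ to $\pi_x$ extracted (up to subsequences) from $J(\pi^k)\to J(\pi)$ in $\mathcal P_{f\oplus\hat g}(X\times\mathcal P_g(Y))$. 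The main obstacle is precisely this last step: without joint upper semicontinuity one cannot directly invoke Portmanteau, and securing the disintegration-wise convergence $\pi^k_x\to\pi_x$ on a $\mu$-full set from an adapted-Wasserstein convergence (where the first marginals vary) is where strong convergence of $\mu^k$ is essential and the analysis is most delicate.
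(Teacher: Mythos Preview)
Your treatment of case \ref{it:assumption A} is correct and essentially matches the paper: pick a (near-)optimal $\pi$, lift it via Proposition \ref{prop:1b}/Theorem \ref{thm:1b} to $\pi^k$ with $J(\pi^k)\to J(\pi)$ in $\mathcal P_{f\oplus\hat g}$, and apply the weighted Portmanteau Lemma \ref{lem:caratheodorylimits}\ref{it:convlsc} to $-C$.

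For case \ref{it:assumption B} there is a genuine gap, and your proposed decomposition is the wrong route. First, the splitting
\[
\int_X C(x,\pi^k_x)\,\mu^k(dx)=\int_X C(x,\pi^k_x)\,\mu(dx)+\int_X C(x,\pi^k_x)\,(\mu^k-\mu)(dx)
\]
is ill-posed: the kernel $\pi^k_x$ is only defined $\mu^k$-almost everywhere, and strong (setwise) convergence of $\mu^k$ to $\mu$ does not make $\mu$ absolutely continuous with respect to $\mu^k$, so the integral against $\mu$ need not even be meaningful. Second, and more fundamentally, adapted-Wasserstein convergence $J(\pi^k)\to J(\pi)$ with \emph{varying} first marginals does not yield $\mu$-a.e.\ convergence $\pi^k_x\to\pi_x$ of the disintegrations, even granting strong convergence of $\mu^k$; this pointwise kernel convergence is exactly what adapted topologies are designed to bypass, not to provide.

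The paper avoids this entirely by passing to \emph{stable convergence}. Since $J(\pi^k)\to J(\pi)$ in $\mathcal P_{f\oplus\hat g}(X\times\mathcal P_g(Y))$ (hence weakly) and their common first marginals $\mu^k$ converge strongly to $\mu$, Lemma \ref{lem:convStable} upgrades this to stable convergence of $J(\pi^k)$ to $J(\pi)$. The key feature of stable convergence is that it supports a Portmanteau theorem for \emph{Carath\'eodory} integrands---measurable in $x$, semicontinuous in the second variable---namely Lemma \ref{lem:caratheodorylimits}\ref{it:convlowCaratheodory} applied to $-C$ on $X\times\mathcal P_g(Y)$. This yields $\limsup_k\int C\,dJ(\pi^k)\le\int C\,dJ(\pi)$ directly from the upper semicontinuity of $p\mapsto C(x,p)$ alone, with no need for any pointwise convergence of kernels.
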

\begin{theorem}[Stability]\label{thm:2Polish2} Let $X$ and $Y$ be Polish spaces {\color{black}(resp.\ $X$ and $Y$ be Polish subspaces of $\R$)}.
	Let $f:X\to[1,+\infty)$ and $g:Y\to[1,+\infty)$ be continuous {\color{black}(resp. $f\in \mathcal F^{|\cdot|}(X)$ and $g \in \mathcal F^{|\cdot|}(Y)$)
        , $C:X\times\mathcal P_g(Y)\to\R$ be measurable, continuous in the second argument  and such that there exists a constant $K>0$ which satisfies for all $(x,p)\in X\times\mathcal P_g(Y)$
	\begin{equation}\label{CgrowthexponentrWOT2absolute}
	\vert C(x,p)\vert\le K\left(f(x)+\int_Yg(y)\,p(dy)\right).
      \end{equation}
      Also assume that $C$ (resp. $\tilde C$ defined in \eqref{deftildeC}) is convex in its second argument.}
	For $k\in\N$, let $\mu^k\in\mathcal P_f(X)$ and $\nu^k\in\mathcal P_g(Y)$ (resp.\ with $\mu^k\le_c\nu^k$) converge in $\mathcal P_f(X)$ and $\mathcal P_g(Y)$ as $k\to+\infty$ to $\mu$ and $\nu$ respectively. 	
	Suppose moreover that one of the following holds true:
	\begin{enumerate}[label = (\Alph*'')]
		\item\label{it:assumption Aa} $C$ is continuous in both arguments.
		\item\label{it:assumption Bb} $\mu^k$ converges strongly to $\mu$ as $k\to+\infty$.
	\end{enumerate}
	Then
	\begin{equation}\label{stabilityVM2}
	V_C(\mu^k,\nu^k)\underset{k\to+\infty}{\longrightarrow}V_C(\mu,\nu)\quad\left(\text{resp.}\quad V^M_C(\mu^k,\nu^k)\underset{k\to+\infty}{\longrightarrow}V^M_C(\mu,\nu)\right),
      \end{equation}
      For $k\in\N$ let $\pi^{k,*}\in\Pi(\mu^k,\nu^k)$ (resp. $\pi^{k,*}\in\Pi^M(\mu^k,\nu^k)$) be a minimiser of $V_C(\mu^k,\nu^k)$ (resp. $V^M_C(\mu^k,\nu^k)$), the existence of which is granted by Theorem \ref{thm:2Polish}. Then any accumulation point of $(\pi^{k,*})_{k\in\N}$ for the weak convergence topology is a minimiser of $V_C(\mu,\nu)$ (resp. $V^M_C(\mu,\nu)$). If the latter has a unique minimiser $\pi^*$, then
	\begin{equation}\label{eq:convergence WOT Polish}
	\pi^{k,*}\underset{k\to+\infty}{\longrightarrow}\pi^*\quad\text{in $\mathcal P_{f \oplus  g}(X \times Y)$ {\color{black}(resp. in $\mathcal P_{f \oplus  g}(X \times Y)$)}}.
	\end{equation}
	
	If $C$ is moreover strictly convex in the second argument, then the convergence \eqref{eq:convergence WOT Polish} holds in $\mathcal{AW}_{f\oplus g}$.
\end{theorem}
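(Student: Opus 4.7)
The plan is to sandwich the value convergence \eqref{stabilityVM2} between the lower semicontinuity of Theorem \ref{thm:2Polish} and the upper semicontinuity of Theorem \ref{thm:2upp}, then extract convergence of optimisers via tightness, and finally—under strict convexity—upgrade to $\mathcal{AW}_{f\oplus g}$-convergence by lifting to $\mathcal P(X\times\mathcal P(Y))$ and exploiting a Jensen-equality argument.

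Under the hypotheses of Theorem \ref{thm:2Polish2}, $C$ is convex and continuous in its second argument and satisfies \eqref{CgrowthexponentrWOT2absolute}. Assumption \ref{it:assumption Aa} implies both \ref{it:assumption A lsc} and \ref{it:assumption A}, while \ref{it:assumption Bb} coincides with \ref{it:assumption B lsc} and \ref{it:assumption B}; Remark \ref{remctildec} transfers the analogous properties to $\tilde C$ in the martingale case. Theorem \ref{thm:2Polish} therefore yields existence of $\pi^{k,*}$ and $V_C(\mu,\nu)\le\liminf_kV_C(\mu^k,\nu^k)$, while Theorem \ref{thm:2upp} gives $\limsup_kV_C(\mu^k,\nu^k)\le V_C(\mu,\nu)$, whence \eqref{stabilityVM2}.

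The marginal convergences force $(\pi^{k,*})$ to be tight in $\mathcal P(X\times Y)$. Along any weakly convergent subsequence $\pi^{k_j,*}\to\pi^\infty\in\Pi(\mu,\nu)$ (resp.\ $\Pi_M(\mu,\nu)$, the martingale condition passing to the limit because the identity belongs to $\Phi_g(Y)$), reapplying the lower semicontinuity argument of Theorem \ref{thm:2Polish} gives $\int_X C(x,\pi^\infty_x)\mu(dx)\le V_C(\mu,\nu)$, so $\pi^\infty$ is optimal. When the limit problem has a unique minimiser $\pi^*$ (in particular if $C$ is strictly convex, by Theorem \ref{thm:2Polish}), every accumulation point equals $\pi^*$, so the whole sequence converges weakly to $\pi^*$. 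Since $\int_{X\times Y}(f\oplus g)\,d\pi^{k,*}=\mu^k(f)+\nu^k(g)\to\mu(f)+\nu(g)$ by the assumed marginal convergence in $\mathcal P_f$ and $\mathcal P_g$, Definition \ref{def:Pf}(i) upgrades this weak convergence to convergence in $\mathcal P_{f\oplus g}(X\times Y)$.

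For $\mathcal{AW}_{f\oplus g}$-convergence under strict convexity, I pass to the lift $J(\pi^{k,*})\in\mathcal P(X\times\mathcal P(Y))$, which is tight since its $X$-marginal $\mu^k$ and its $\mathcal P(Y)$-marginal both are (the latter with barycentre $\nu^k$ controlled in $\mathcal P_g(Y)$). For any weak limit $J(\pi^{k_j,*})\to\Pi^\infty=\mu\otimes\Pi^\infty_x$, the barycentre $\bar\pi_x:=\int_{\mathcal P(Y)} p\,\Pi^\infty_x(dp)$ defines a coupling $\bar\pi\in\Pi(\mu,\nu)$ (resp.\ $\Pi_M(\mu,\nu)$), and Jensen's inequality applied to the convex map $C(x,\cdot)$ together with a joint lsc statement for $(x,p)\mapsto C(x,p)$ (granted by \ref{it:assumption Aa} or \ref{it:assumption Bb}, arguing as in Theorem \ref{thm:2Polish}) yields
$$\int_X C(x,\bar\pi_x)\,\mu(dx)\le\int_{X\times\mathcal P(Y)}C(x,p)\,\Pi^\infty(dx,dp)\le\liminf_j\int_X C(x,\pi^{k_j,*}_x)\,\mu^{k_j}(dx)=V_C(\mu,\nu).$$
Uniqueness of the optimiser forces $\bar\pi=\pi^*$, and equality in the first inequality, combined with strict convexity of $C(x,\cdot)$, then forces $\Pi^\infty_x=\delta_{\pi^*_x}$ for $\mu$-a.e.\ $x$, i.e.\ $\Pi^\infty=J(\pi^*)$. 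Hence every subsequence produces the same limit, so $J(\pi^{k,*})\to J(\pi^*)$ weakly; combined with $\int(f\oplus\hat g)\,dJ(\pi^{k,*})=\mu^k(f)+\nu^k(g)\to\mu(f)+\nu(g)$, Definition \ref{AWf+gdef} gives the desired $\mathcal{AW}_{f\oplus g}$-convergence. The main obstacle will be the Jensen-to-Dirac implication: one must identify $\Pi^\infty_x$ as effectively concentrated on $\mathcal P_g(Y)$ (via the uniform integrability transferred from $\nu^k(g)\to\nu(g)$), so that strict convexity of $C(x,\cdot)$ on the convex set $\mathcal P_g(Y)$ can be invoked to force the conditional law to degenerate to its barycentre.
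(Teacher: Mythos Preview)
Your proposal is correct and follows essentially the same route as the paper: the sandwich of Theorems \ref{thm:2Polish} and \ref{thm:2upp} for value convergence, tightness-plus-lsc for accumulation points, and the lift to $\mathcal P(X\times\mathcal P(Y))$ with a Jensen-equality-case argument (the paper's Proposition \ref{infinite-dimensional Jensen inequality}) for the $\mathcal{AW}_{f\oplus g}$-upgrade under strict convexity. The only cosmetic difference is that the paper organises the last step around an auxiliary problem $\tilde V_C$ on $\hat\Lambda(\mu,\nu)$ rather than arguing directly with the limit $\Pi^\infty$, and your anticipated obstacle (concentration of $\Pi^\infty_x$ on $\mathcal P_g(Y)$) is precisely what Lemma \ref{lem:equivalent topologies} handles.
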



\section{Proofs}
\label{sec:proofs}
\subsection{Proofs of the stability theorems}\label{sec:stability}
This section is devoted to the proof of Theorem \ref{thm:2Polish}, Theorem \ref{thm:2upp} and Theorem \ref{thm:2Polish2} about the stability of \eqref{WOT2} and \eqref{WMOT2}.

\begin{proof}[Proof of Theorem \ref{thm:2Polish}] 

\textcolor{black}{Since in the martingale case, for $\mu\le_c\nu$, $V_C^M(\mu,\nu)=V_{\tilde C}(\mu,\nu)$, the statements concerning \eqref{WMOT2} follow from those concerning \eqref{WOT2} that we now prove.}

	Let $(\pi^n)_{n\in\N}\in \Pi(\mu,\nu)^\N$ be such that $\int_X C(x,\pi^n_x)\,\mu(dx)$ converges to $V_C(\mu,\nu)$ as $n\to+\infty$. By tightness of $\mu$ and $\nu$ we deduce the existence of a subsequence $(\pi^{n_l})_{l\in\N}$ of $(\pi^n)_{n\in\N}$ which converges to some $\pi^*\in \Pi(\mu,\nu)$ with respect to the weak convergence topology and therefore the topology of $\mathcal P_{f\oplus g}(X\times Y)$ since $\pi^{n_l}(f\oplus g)=\mu(f)+\nu(f)=\pi^*(f\oplus g)$ for all $l\in\N$. 
	\textcolor{black}{Since, for each $l\in\N$, the first marginal of $\pi^{n_l}$ is $\mu$, combining Lemma \ref{lem:convStable} and Proposition \ref{prop: C Pf lsc} \ref{it:Cpix lsc stable} below we then have}
	\[
	V_C(\mu,\nu)\le\int_X C(x,\pi^*_x)\,\mu(dx)\le\liminf_{l\to+\infty}\int_X C(x,\pi^{n_l}_x)\,\mu(dx)= V_C(\mu,\nu),
	\]
	which shows that $\pi^*$ is a minimiser for $V_C(\mu,\nu)$. The finiteness of $V_C(\mu,\nu)$ and the strict convexity of $C(x,\cdot)$ for all $x\in X$ yield uniqueness of the minimisers. Indeed when $\pi,\tilde\pi\in \Pi(\mu,\nu)$ then $\frac{1}{2}(\pi+\tilde\pi)\in\Pi(\mu,\nu)$. When, moreover, $\pi\neq\tilde\pi$, then $\mu(\{x\in X\colon\pi_x\neq \tilde \pi_x\})>0$ and since $C(x,\frac{1}{2}(\pi_x+\tilde \pi_x))\le \frac{1}{2}(C(x,\pi_x)+C(x,\tilde\pi_x))$ with strict inequality when $\pi_x\neq\tilde\pi_x$,
	\begin{equation}\label{eq:strict convexity cost function}
	\int_X C\left(x,\frac{\pi_x+\tilde \pi_x}{2}\right)\, \mu(dx)<\frac{1}{2}\left(\int_X C(x,\pi_x)\, \mu(dx)+\int_X C(x,\tilde \pi_x)\, \mu(dx)\right).
	\end{equation}
	
	We now show \eqref{stabilityVM}. Let $(V_C(\mu^{k_\ell},\nu^{k_\ell}))_{l\in\N}$ be a subsequence of $(V_C(\mu^k,\nu^k))_{k\in\N}$ such that 
	\[ \lim_{l \to +\infty} V_C(\mu^{k_\ell},\nu^{k_\ell}) = \liminf_{k\to+\infty}V_C(\mu^k,\nu^k), \]
	and denote, for $l \in \N$, an optimizer of $V_C(\mu^{k_\ell},\nu^{k_\ell})$ by $\pi^{k_\ell,\ast}$.
	Let $\tilde\pi\in\Pi(\mu,\nu)$ be an accumulation point of $(\pi^{k_\ell,*})_{l\in\N}$ in $\mathcal P_{f \oplus g}(X \times Y)$, which exists by relative compactness of the marginals. 
	Then, \textcolor{black}{by Proposition \ref{prop: C Pf lsc} \ref{it:Cpix lsc}  below under \ref{it:assumption A lsc} and by Lemma \ref{lem:convStable} and Proposition \ref{prop: C Pf lsc} \ref{it:Cpix lsc stable} below under \ref{it:assumption B lsc}}, we find that
	\begin{equation*}
	\liminf_{k\to+\infty} V_C(\mu^k,\nu^k)=\lim_{l\to+\infty}\int_X C(x,\pi^{k_\ell,*}_x)\,\mu^{k_\ell}(dx)\ge\int_X C(x,\tilde\pi_x)\,\mu(dx)\ge  V_C(\mu,\nu).
	\end{equation*}
\end{proof}
\begin{proof}[Proof of Theorem \ref{thm:2upp}]
    Let $\hat\Pi(\mu,\nu)=\Pi(\mu,\nu)$ and $\hat V_C(\mu,\nu)=V_C(\mu,\nu)$ in the classic setting, and $\hat\Pi(\mu,\nu)=\Pi_M(\mu,\nu)$ and $\hat V_C(\mu,\nu)=V^M_C(\mu,\nu)$ in the martingale setting. Let $\pi\in\hat\Pi(\mu,\nu)$. By Proposition \ref{prop:1b} in the classic setting and Theorem \ref{thm:1b} in the martingale setting, there exists a sequence $\pi^k\in\hat\Pi(\mu^k,\nu^k)$, $k\in\N$, which converges to $\pi$ in $\mathcal{AW}_{f\oplus g}$, which is equivalent to $J(\pi^k)$ converging to $J(\pi)$ in $\mathcal P_{f \oplus \hat g}(X \times \mathcal P(Y) )$.
	
\textcolor{black}{We then have by Lemma \ref{lem:caratheodorylimits} \ref{it:convlsc} under Assumption \ref{it:assumption A} and by  Lemma \ref{lem:convStable} and Lemma \ref{lem:caratheodorylimits} \ref{it:convlowCaratheodory} under \ref{it:assumption B} (Lemma \ref{lem:caratheodorylimits} is applied with $c$, $Y$ and $g$ replaced by $-C$, $\mathcal P_g(Y)$ and $\hat g$) that}
	\begin{equation}\label{importantConvergence}
	\limsup_{k\to\infty}\int_{X\times\mathcal P_g(Y)}C(x,p)\,J(\pi^k)(dx,dp)\le\int_{X\times\mathcal P_g(Y)}C(x,p)\,J(\pi)(dx,dp).
	\end{equation}
        Since $$\hat V_C(\mu^k,\nu^k)\le	\int_{X}C(x,\pi^k_x)\,\mu^k(dx)=	\int_{X\times\mathcal P_g(Y)}C(x,p)\,J(\pi^k)(dx,dp),$$
        we deduce that
        $$\limsup_{k\to\infty}\hat V_C(\mu^k,\nu^k)\le \int_{X}C(x,\pi_x)\,\mu(dx)$$
        and conclude by taking the infimum of the right-hand side with respect to $\pi\in\hat\Pi(\mu,\nu)$.
\end{proof}
\begin{proof}[Proof of Theorem \ref{thm:2Polish2}]
We adopt the same notation as in the proof of Theorem \ref{thm:2upp}. Combining Theorems \ref{thm:2Polish} and \ref{thm:2upp} and \textcolor{black}{Remark \ref{remctildec} (to check the lower semicontinuity of $\tilde C$ in the martingale case)} yields the stability result \eqref{stabilityVM2}. For $k\in\N$, let $\pi^{k,*}\in\hat\Pi(\mu^k,\nu^k)$ be a minimiser of $\hat V_C(\mu^k,\nu^k)$. For any subsequence $(\pi^{k_\ell,*})_{\ell\in\N}$ of $(\pi^{k,*})_{k\in\N}$ converging weakly to some $\tilde\pi$, \textcolor{black}{the convergence also holds in $\mathcal P_{f \oplus g}(X \times Y)$ and we have by Proposition \ref{prop: C Pf lsc} \ref{it:Cpix lsc} below under Assumption \ref{it:assumption Aa}  and by Lemma \ref{lem:convStable} and Proposition \ref{prop: C Pf lsc} \ref{it:Cpix lsc stable} below  under \ref{it:assumption Bb} that}
	\[
	\hat V_C(\mu,\nu)=\lim_{\ell\to+\infty}\hat V_C(\mu^{k_\ell},\nu^{k_\ell})=\lim_{\ell\to+\infty}\int_X C(x,\pi^{k_\ell,*}_x)\,\mu(dx)\ge\int_X C(x,\tilde\pi_x)\,\mu(dx)\ge\hat V_C(\mu,\nu),
	\]
	so $\tilde\pi$ is a minimiser of $\hat V_C(\mu,\nu)$. In particular if $\hat V_C(\mu,\nu)$ has a unique minimiser $\pi^*$, it is the unique accumulation point with respect to the weak convergence topology of the tight sequence $(\pi^{k,*})_{k\in\N}$, which therefore converges to $\pi^*$ weakly and even in $\mathcal P_{f \oplus g}(X \times Y)$ since its marginals converge in $\mathcal P_f(X)$ and $\mathcal P_g(Y)$ respectively.
	
	Let us now assume that $C$ is moreover strictly convex in its second argument. By Theorem \ref{thm:2Polish}, this implies uniqueness of the minimisers.	Let $\pi^*$ be the only minimiser of $\hat V_C(\mu,\nu)$. To conclude the proof, it is enough to show that $J(\pi^{k,*})$ converges to $J(\pi^*)$ in $\mathcal P_{f \oplus \hat g}(X \times \mathcal P(Y))$ as $k$ goes to $+\infty$. Let $P^*\in\mathcal P_{f \oplus \hat g}(X\times\mathcal P(Y))$ be an accumulation point of $(J(\pi^{k,*}))_{k\in\N}$, which exists by Lemma \ref{lem:rel comp product space}.
	
	To conclude, 
	it suffices to show that $P^*=J(\pi^*)$, which is achieved in three steps. Let $\hat\Lambda(\mu,\nu)=\Lambda(\mu,\nu)$ (see the definition \eqref{defLambda} below) in the classic setting and $\hat\Lambda(\mu,\nu)=\Lambda_M(\mu,\nu)$ (see the definition \eqref{defLambdaM} below) in the martingale setting. First we show that
	\begin{equation}\label{eq:P in biglambda}
	P^*\in\hat\Lambda(\mu,\nu).
	\end{equation}
	
	Next, we show that $J(\pi^*)$ and $P^*$ both minimise
	\[
	\tilde V_C(\mu,\nu):=\inf_{P\in\hat\Lambda(\mu,\nu)}\int_{X\times\mathcal P_g(Y)}C(x,p)\,P(dx,dp).
	\]
	
	Finally, we show the uniqueness of minimisers of $\tilde V_C(\mu,\nu)$.
	
	Let $(J(\pi^{k_\ell,*}))_{\ell\in\N}$ be a subsequence converging to $P^*$ in $\mathcal P_{f \oplus \hat g}(X \times \mathcal P(Y))$. By Lemma \ref{lem: continuity I} below we have
	\[
	\int_{(x,p)\in X\times\mathcal P_g(Y)}p(dy)\,J(\pi^{k_\ell,*})(dx,dp)\underset{\ell\to+\infty}{\longrightarrow}\int_{(x,p)\in X\times\mathcal P_g(Y)}p(dy)\,P^*(dx,dp),
	\]
	where the convergence holds in \textcolor{black}{$\mathcal P_{g}(Y)$} as $\ell$ goes to $+\infty$. Since the left-hand side is $\nu^{k_\ell}$, which converges to $\nu$ in $\mathcal P_g(Y)$ and therefore in the weak topology, we deduce by uniqueness of the limit that the right-hand side is $\nu$, hence $P^*\in\Lambda(\mu,\nu)$.
	In the martingale setting, since, as $f,g\in\mathcal F^{|\cdot|}(\R)$, $X\times\mathcal P_g(Y)\ni(x,p)\mapsto\left\vert x-\int_Y y\,p(dy)\right\vert\in\Phi_{f \oplus \hat g}(X\times\mathcal P_g(Y))$, we have that
	\[
	0=\int_{X\times\mathcal P_g(Y)}\left\vert x-\int_Y y\,p(dy)\right\vert\,J(\pi^{k_\ell,*})(dx,dp)\underset{\ell\to+\infty}{\longrightarrow}\int_{X\times\mathcal P_g(Y)}\left\vert x-\int_Y y\,p(dy)\right\vert\,P^*(dx,dp),
	\]
	hence $P^*\in\Lambda_M(\mu,\nu)$.
	
	Let us show that $J(\pi^*)$ and $P^*$ both minimise $\tilde V_C(\mu,\nu)$. Note that since $P^*\in\hat\Lambda(\mu,\nu)$, we have $P^*(X\times\mathcal P_g(Y))=1$. Since $(J(\pi^{k_\ell,*}))_{\ell\in\N}$ converges to $P^*$ in $\mathcal P_{f \oplus \hat g}(X \times \mathcal P(Y))$, we find by Lemma \ref{lem:caratheodorylimits}
	\begin{equation}\label{importantConvergence2}
	\int_{X\times\mathcal P_g(Y)}C(x,p)\,J(\pi^{k_\ell,*})(dx,dp)\underset{\ell\to+\infty}{\longrightarrow}\int_{X\times\mathcal P_g(Y)}C(x,p)\,P^*(dx,dp).
	\end{equation}
	
	Then \eqref{importantConvergence2} and the definition of $\pi^{k_\ell,*}$ yield
	\begin{align}\begin{split}\label{PJpiopt}
	\int_{X\times\mathcal P_g(Y)}C(x,p)\,P^*(dx,dp)&=\lim_{\ell\to+\infty}\int_{X\times\mathcal P_g(Y)} C(x,p)\,J(\pi^{k_\ell,*})(dx,dp)\\
	&=\lim_{\ell\to+\infty}\hat V_C(\mu^{k_\ell},\nu^{k_\ell})\\
	&=\hat V_C(\mu,\nu)\\
	&=\int_{X\times\mathcal P_g(Y)}C(x,p)\,J(\pi^*)(dx,dp).
\end{split}	\end{align}
	
	Let $P(dx,dp)=\mu(dx)\,P_x(dp)\in\hat\Lambda(\mu,\nu)$. 
	Then $\mu(dx)\,\int_{p\in\mathcal P_g(Y)}p(dy)\,P_x(dp)\in\hat\Pi(\mu,\nu)$, and by applying Proposition \ref{infinite-dimensional Jensen inequality} below in the last inequality, we find
	\begin{align}\begin{split}\label{intCJconvergesintCP}
	\int_{X\times\mathcal P_g(Y)}C(x,p)\,J(\pi^*)(dx,dp)&=\int_X C(x,\pi^*_x)\,\mu(dx)\\
	&=\hat V_C(\mu,\nu)\\
	&\le\int_X C\left(x,\int_{p\in\mathcal P_g(Y)}p(dy)\,P_x(dp)\right)\,\mu(dx)\\
	&\le\int_X\int_{\mathcal P_g(Y)}C(x,p)\,P_x(dp)\,\mu(dx),
	\end{split}
	\end{align}
	\textcolor{black}{which proves $\hat V_C(\mu,\nu)=\tilde V_C(\mu,\nu)$ with the latter attained by $J(\pi^*)$ and, according to \eqref{PJpiopt}, by $P^*$.}
	
	\textcolor{black}{Let us finally prove that $J(\pi^*)$ is the only minimiser of $\tilde V_C(\mu,\nu)$, which implies that $P^*=J(\pi^*)$. Since for all $\pi \in \hat \Pi(\mu,\nu)$
	\[	\int_X C(x,\pi_x) \, \mu(dx) = \int_{X \times \mathcal P_g(Y)} C(x,p) \, J(\pi)(dx,dp),\]
$J(\pi^*)$ is the only minimizer of $\tilde V_C(\mu,\nu)$ in the image of $\hat \Pi(\mu,\nu)$ by $J$. 
        To conclude, it is enough to check that any minimiser $\tilde P\in\hat\Lambda(\mu,\nu) $ actually belongs to this image. For $x\in X$, let $\tilde\pi_x(dy)=\int_{p\in\mathcal P_g(Y)}p(dy)\,\tilde P_x(dp)$ and $\tilde\pi(dx,dy)=\mu(dx)\,\tilde\pi_x(dy)$. Then $J(\tilde\pi)\in\hat\Lambda(\mu,\nu)$ and Proposition \ref{infinite-dimensional Jensen inequality} below yields
	\[
	\int_{X\times\mathcal P_g(Y)}C(x,p)\,J(\tilde\pi)(dx,dp)=\int_X C(x,\tilde\pi_x)\,\mu(dx)\le\int_X\int_{\mathcal P_g(Y)}C(x,p)\,\tilde P_x(dp)\,\mu(dx).\]
By optimality of $\tilde P$, this inequality is an equality, hence for $\mu(dx)$-almost every $x\in X$ we have
	\[
	C(x,\tilde\pi_x)=\int_{\mathcal P_g(Y)}C(x,p)\,\tilde P_x(dp),
	\]
	and therefore $\tilde P_x=\delta_{\tilde\pi_x}$ by the equality case of Proposition \ref{infinite-dimensional Jensen inequality} below, or equivalently $\tilde P=J(\tilde\pi)$.}
	
	
\end{proof}

      \subsection{Proof of the stability of the dual value function of VIX-futures}\label{sec:prvix} 
\begin{proof}[Proof of Theorem~\ref{thm:VIX}]
  \color{black}{
  The open interval $(0,+\infty)$ is a Polish subspace of $\R$ and the function $f(x)=|\ln x|+|x|$ belongs to ${\cal F}^{|\cdot|}((0,+\infty))$ (the fact that $f$ is bounded from below by $1$ follows from the inequality $|\ln x|\ge 1-x$ valid for $x\in(0,1]$).
  The cost function
  $$(0,+\infty)\times \mathcal P_f((0,+\infty))\ni(x,p)\mapsto C(x,p)=-\sqrt{\frac{2}{T_2-T_1}\left(\int_{(0,+\infty)}\ln\left(\frac x y\right)p(dy)\right)\vee 0}$$
   is continuous and such that
$$|C(x,p)|\le\frac 1 2+\frac{1}{T_2-T_1}\int_{(0,+\infty)}\ln\left(\frac x y\right)p(dy)\le \left(\frac 1 4+\frac{1}{T_2-T_1}\right)(f(x)+p(f)),$$
and that the modified cost function $\tilde C$ defined in \eqref{deftildeC} is convex in its second argument by concavity of the square root function.
    Since for each couple $(\mu,\nu)\in\mathcal P_f((0,+\infty))$ with $\mu\le_c \nu$,
    $$-D_{\textrm{super}}(\mu,\nu)=V^M_C(\mu,\nu),$$
    the conclusion follows from Theorem \ref{thm:2Polish2} applied with $X=Y=(0,+\infty)$ and $g=f$.}
 \end{proof}

\subsection{Proof of the stability of the stretched Brownian motion}\label{sec:prstr}
The stability of the unidimensional stretched Brownian motion in $\mathcal{W}_r$-topology stated in Corollary \ref{cor:stretched bm} actually holds in the stronger $\mathcal{AW}_r$-topology.
\begin{proposition}[Stability of the unidimensional stretched Brownian motion]\label{prop:stretched bm} Under the assumptions of Corollary \ref{cor:stretched bm},\[
	\mathcal{AW}_r^r\left(\mathcal L(M^k_0,(M^k_t)_{t\in[0,1]}),\mathcal L(M^*_0,(M^*_t)_{t\in[0,1]})\right)\le\left(\frac{r}{r-1}\right)^r\mathcal{AW}_r^r\left(\mathcal L(M^k_0,M^k_1),\mathcal L(M^*_0,M^*_1)\right),
	\]
	and the right-hand side vanishes as $k$ goes to $+\infty$.
      \end{proposition}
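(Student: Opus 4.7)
The plan is to reduce the adapted $\mathcal{W}_r$-distance between the full path laws to the adapted $\mathcal{W}_r$-distance between the endpoints $(M_0,M_1)$ by coupling the two stretched Brownian motions through a common driving Brownian motion and then invoking Doob's $L^r$-maximal inequality. The sharp Doob constant $(r/(r-1))^r$ is precisely the factor on the right-hand side of the bound, which makes this the natural strategy.

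For the vanishing of the right-hand side, I would apply Theorem~\ref{thm:2Polish2} to the WMOT problem with cost $C_2(x,p) = \mathcal{W}_2^2(p,\mathcal{N}(0,1))$. The limiting optimiser $\pi^* := \mathcal{L}(M^*_0,M^*_1)$ is unique by \cite[Theorem~1.5]{BaBeHuKa20}, and the map $p\mapsto \mathcal{W}_2^2(p,\mathcal{N}(0,1))$ is strictly convex on $\mathcal P^2(\R)$ because $\mathcal N(0,1)$ is atomless; the last part of Theorem~\ref{thm:2Polish2} then gives $\pi^k := \mathcal{L}(M^k_0,M^k_1) \to \pi^*$ in $\mathcal{AW}_r$, which is precisely the right-hand side of the displayed inequality tending to $0$.

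For the inequality itself, fix $\chi \in \Pi(\mu^k,\mu)$ and, for $\chi$-almost every $(x^k,x^*)$, use the one-dimensional quantile coupling of $\pi^k_{x^k}$ and $\pi^*_{x^*}$, which attains $\mathcal{W}_r^r(\pi^k_{x^k},\pi^*_{x^*})$. On an enlarged probability space let $(X^k,X^*)$ have law $\chi$, be independent of a Brownian motion $B$, and define $M^k$, $M^*$ via the representation \eqref{eq:stretched bm solution} driven by the \emph{same} Brownian motion $B$. Since $X^*$ is independent of $B$ conditionally on $X^k$ (and symmetrically), $M^k$ retains its correct unconditional law and is a martingale in the enlarged filtration $\mathcal{G}_t := \sigma(X^k,X^*,B_s : s\le t)$, and likewise for $M^*$. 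Consequently $D_t := M^k_t - M^*_t$ is a $\mathcal{G}$-martingale with $D_1 = F_{\pi^k_{X^k}}^{-1}(U) - F_{\pi^*_{X^*}}^{-1}(U)$, where $U := F_{\mathcal{N}(0,1)}(B_1) \sim \mathrm{Unif}(0,1)$ is independent of $(X^k,X^*)$. One-dimensional optimality of the quantile coupling yields $\E[|D_1|^r \mid X^k,X^*] = \mathcal{W}_r^r(\pi^k_{X^k},\pi^*_{X^*})$, and Doob's $L^r$-inequality applied conditionally on $(X^k,X^*)$ gives $\E[\sup_{t\in[0,1]} |D_t|^r \mid X^k,X^*] \le (r/(r-1))^r\, \mathcal{W}_r^r(\pi^k_{X^k},\pi^*_{X^*})$. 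The synchronous coupling of paths bounds $\mathcal{W}_r^r(\mathcal{L}((M^k_t)_t \mid X^k=x^k),\mathcal{L}((M^*_t)_t \mid X^*=x^*))$ by $\E[\sup_t |D_t|^r \mid X^k=x^k,X^*=x^*]$; integrating against $\chi$, using $(r/(r-1))^r \ge 1$ on the endpoint cost, and taking the infimum over $\chi$ yields the claimed bound.

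The main delicate point is the filtration argument: one has to verify that enlarging the filtration by $X^*$ preserves the martingale property of $M^k$ (and symmetrically for $M^*$), which hinges on the conditional independence of $X^*$ and $B$ given $X^k$. A secondary technical step is to check strict convexity of $C_2(x,\cdot)$ rigorously, which in dimension one can be done by hand using the quantile representation of $\mathcal{W}_2^2(\cdot,\mathcal{N}(0,1))$.
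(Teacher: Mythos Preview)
Your proposal is correct and follows essentially the same route as the paper: couple the two stretched Brownian motions through a common driving Brownian motion via the representation \eqref{eq:stretched bm solution}, apply Doob's $L^r$-maximal inequality to the difference, use that the comonotone coupling realises $\mathcal W_r^r(\pi^k_{x},\pi^*_{x'})$, and invoke Theorem~\ref{thm:2Polish2} with strict convexity of $p\mapsto\mathcal W_2^2(p,\mathcal N(0,1))$ for the vanishing of the right-hand side. The paper avoids your ``main delicate point'' on filtration enlargement simply by working pointwise in $(x,x')$: for fixed $x$ and $x'$, both $G^k(x,B)$ and $G^*(x',B)$ are martingales in the natural filtration of $B$, so their difference is too, with no enlargement needed; your conditioning argument is correct but this pointwise viewpoint is the shorter path.
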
The proof of the proposition relies on the following Lemma.

\begin{lemma}\label{lem:stability stretched bm}
	Let $\rho>1$, and $C_\rho:\R\times\mathcal P^\rho(\R)\to\R$ be defined for all $(x,p)\in\R\times\mathcal P^\rho(\R)$ by $C_\rho(x,p)=\mathcal W_\rho^\rho(p,\gamma)$, where $\gamma\in\mathcal P^\rho(\R)$ does not weight points. Let $V^M_{C_\rho}$ be the value function given by \eqref{WMOT2} for the cost function $C_\rho$.
	
	Let $r\ge\rho$ and $\mu^k,\nu^k\in\mathcal P^r(\R)$, $k\in\N$ be in convex order and converge respectively to $\mu$ and $\nu$ in $\mathcal W_r$. Then $\lim_{k\to+\infty}V^M_{C_\rho}(\mu^k,\nu^k)=V^M_{C_\rho}(\mu,\nu)$ and the optimisers are converging in $\mathcal{AW}_r$.
\end{lemma}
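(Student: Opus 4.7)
The plan is to deduce the lemma directly from the martingale stability result of Theorem \ref{thm:2Polish2}, applied with $X = Y = \R$ and with the weights $f(x) = 1 + |x|^r$, $g(y) = 1 + |y|^r$. For these weights $\mathcal P_f(\R)$ (resp.\ $\mathcal P_g(\R)$) coincides with $\mathcal P^r(\R)$ with the $\mathcal W_r$-topology, so the hypothesis that $\mu^k, \nu^k$ converge in $\mathcal W_r$ to $\mu, \nu$ is exactly $\mathcal P_f$- and $\mathcal P_g$-convergence. Moreover, $C_\rho$ does not depend on its first argument, so continuity in both arguments (assumption (A'')) is automatic. For the growth condition, coupling $p$ and $\gamma$ independently gives
\[
C_\rho(x,p) = \mathcal W_\rho^\rho(p,\gamma) \le 2^{\rho-1}\left(\int_\R |y|^\rho\, p(dy) + \int_\R |z|^\rho\, \gamma(dz)\right),
\]
and since $\rho \le r$ one has $|y|^\rho \le 1 + |y|^r = g(y)$, yielding a constant $K>0$ with $|C_\rho(x,p)| \le K(f(x) + p(g))$.

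It remains to verify that $C_\rho$ is continuous, convex and strictly convex in its second argument, and (in view of Remark \ref{remctildec}) that $\tilde C_\rho$ is then automatically convex in the second argument. Continuity: convergence in $\mathcal P_g(\R)$ implies $\mathcal W_r$-convergence, hence $\mathcal W_\rho$-convergence since $\rho \le r$, and $p \mapsto \mathcal W_\rho^\rho(p,\gamma)$ is continuous on $\mathcal P^\rho(\R)$. Convexity is the classical fact that if $\pi_1^*, \pi_2^*$ denote optimal couplings of $(p_1,\gamma),(p_2,\gamma)$, then $\pi_\lambda := \lambda \pi_1^* + (1-\lambda)\pi_2^*$ is a coupling of $(\lambda p_1 + (1-\lambda)p_2, \gamma)$ giving the convex-combination upper bound. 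For strict convexity, which is the main point, I would argue as follows: in one dimension, when $\rho > 1$ and $\gamma$ is atomless, the comonotonic coupling is the unique optimiser. If $p_1 \neq p_2$, there is a set of $u \in [0,1]$ of positive Lebesgue measure on which $F_{p_1}^{-1}(u) \neq F_{p_2}^{-1}(u)$. Parametrising by $u = F_\gamma(Z)$ (which is uniform since $\gamma$ is atomless), $\pi_\lambda$ assigns to $u$ two distinct values of $y$ with positive probability, hence is not supported on the graph of $F_{p_\lambda}^{-1}$ and is therefore strictly suboptimal. This gives $\mathcal W_\rho^\rho(p_\lambda,\gamma) < \lambda \mathcal W_\rho^\rho(p_1,\gamma) + (1-\lambda)\mathcal W_\rho^\rho(p_2,\gamma)$.

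With these verifications in place, Theorem \ref{thm:2Polish2} in the martingale case gives $V^M_{C_\rho}(\mu^k,\nu^k) \to V^M_{C_\rho}(\mu,\nu)$, existence of minimisers $\pi^{k,*} \in \Pi_M(\mu^k,\nu^k)$ and a unique minimiser $\pi^* \in \Pi_M(\mu,\nu)$, and moreover $\pi^{k,*} \to \pi^*$ in $\mathcal{AW}_{f \oplus g}$. Since $f(x) = 1+|x|^r$ and $g(y) = 1+|y|^r$, Definition \ref{AWf+gdef} identifies $\mathcal{AW}_{f\oplus g}$-convergence with $\mathcal{AW}_r$-convergence, which is the desired conclusion. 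The main (and essentially only non-routine) step is the strict-convexity verification; all remaining ingredients are direct consequences of the inequality $|y|^\rho \le g(y)$ together with the already-established machinery of Theorem \ref{thm:2Polish2}.
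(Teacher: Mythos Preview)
Your proposal is correct and follows essentially the same route as the paper: both apply Theorem~\ref{thm:2Polish2} with $X=Y=\R$, $f(x)=1+|x|^r$, $g(y)=1+|y|^r$, and both reduce the matter to checking strict convexity of $p\mapsto\mathcal W_\rho^\rho(\gamma,p)$ via the uniqueness of the comonotonic coupling when $\gamma$ is atomless and $\rho>1$. You spell out the growth and continuity verifications more explicitly than the paper (which takes them as immediate), but the core strict-convexity argument is the same.
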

\begin{proof}
	By \textcolor{black}{Theorem \ref{thm:2Polish2} applied with $X=Y=\R$, $f(x)=1+|x|^r$ and $g(y)=1+|y|^r$}, it is sufficient to show that $p \mapsto \mathcal W_\rho^\rho(\gamma,p)$ is strictly convex. Since $\gamma$ does not weight points, the unique $\mathcal W_\rho$-optimal coupling between $\gamma$ and $p\in{\cal P}_\rho(\R)$ is the comonotonous coupling  $\chi^p$ given by the map $x\mapsto F_p^{-1}(F_\gamma(x))$ i.e. the image of $\gamma$ by $x\mapsto (x,F_p^{-1}(F_\gamma(x)))$. For $q\in{\cal P}_\rho(\R)$ and $\lambda\in (0,1)$ the coupling $\chi=(1-\lambda)\chi^p+\lambda\chi^q$ between $\gamma$ and $(1-\lambda)p+\lambda q$ is not given by a map unless $F_q^{-1}(u)=F_p^{-1}(u)$ for all $u\in (0,1)$ i.e. $p=q$. Therefore, when $p\neq q$,
	\[ (1 - \lambda) \mathcal W_\rho^\rho(\gamma,p) + \lambda \mathcal W_\rho^\rho(\gamma,q) = \int |x-y|^\rho \, \chi(dx,dy) > \mathcal W_\rho^\rho( \gamma,(1 - \lambda) p + \lambda q). \]
\end{proof}\begin{proof}[Proof of Proposition \ref{prop:stretched bm}]Let $\gamma=\mathcal N(0,1)$ be the unidimensional standard normal distribution and $C_2:\R\times\mathcal P^2(\R)\to\R$ be defined for all $(x,p)\in\R\times\mathcal P^2(\R)$ by $C_2(x,p)=\mathcal W_2^2(p,\gamma)$. Let $V^M_{C_2}$ be the value function given by \eqref{WMOT2} for the cost function $C_2$.
	
	In the setting of Corollary \ref{cor:stretched bm}, let $\pi^*\in\Pi_M(\mu,\nu)$, resp. $\pi^k\in\Pi_M(\mu^k,\nu^k)$ be optimal for $V^M_{C_2}(\mu,\nu)$, resp. $V^M_{C_2}(\mu^k,\nu^k)$. For $(x,b)\in\R\times\R^{[0,1]}$, let $B=(B_t)_{t\in[0,1]}$ be a Brownian motion and
	\[
	G^k(x,b)=\left(\E\left[F_{\pi^k_x}^{-1}(F_\gamma(B_1-B_t+b_t))\right]\right)_{t\in[0,1]}\text{ and }G^*(x,b)=\left(\E\left[F_{\pi^*_x}^{-1}(F_\gamma(B_1-B_t+b_t))\right]\right)_{t\in[0,1]}.
	\]
	
	According to \eqref{eq:stretched bm solution}, $(M^k_0,(M^k_t)_{t\in[0,1]})$ and $(M^*_0,(M^*_t)_{t\in[0,1]})$ are respectively distributed according to
	\[
	\eta^k(dx,df):=\mu^k(dx)\,(G^k(x,\cdot)_\ast W)(df)\quad\text{and}\quad \eta^*(dx,df):=\mu(dx)\,(G^*(x,\cdot)_\ast W)(df),
	\]
	where $W$ denotes the Wiener measure on ${\mathcal C}([0,1])$. Let $\chi^k\in\Pi(\mu^k,\mu)$ be optimal for $\mathcal{AW}_r(\pi^k,\pi)$. Then
	\begin{align*}
	\mathcal{AW}_r^r(\eta^k,\eta^*)&\le\int_{\R\times\R}\left(\vert x-x'\vert^r+\mathcal W_r^r(G^k(x,\cdot)_\ast W,G(x',\cdot)_\ast W)\right)\,\chi^k(dx,dx').
	\end{align*}
	
	According to \eqref{eq:stretched bm solution}, for $\mu^k(dx)$-almost every $x\in\R$, $G^k(x,B)$ is the stretched Brownian motion from $\delta_x$ to $\pi^k_x$, hence it is a continuous $(\mathcal F_t)_{t\in[0,1]}$-martingale, where $(\mathcal F_t)_{t\in[0,1]}$ is the natural filtration associated to $B$. Similarly, for $\mu(dx')$-almost every $x\in\R$, $G^*(x',B)$ is a continuous $(\mathcal F_t)_{t\in[0,1]}$-martingale. Therefore, for $\chi^k(dx,dx')$-almost every $(x,x')\in\R\times\R$, $G^k(x,B)-G^*(x',B)$ is a continuous $(\mathcal F_t)_{t\in[0,1]}$-martingale. Using Doob's martingale inequality for the second inequality, the fact that $F_\gamma(B_1)$ is uniformly distributed on $(0,1)$ for the first equality and the fact that the comonotonous coupling between $\pi^k_x$ and $\pi^*_{x'}$ is optimal for $\mathcal W_r(\pi^k_x,\pi^*_{x'})$ for the second equality, we get for $\chi^k(dx,dx')$-almost every $(x,x')\in\R\times\R$
	\begin{align*}
	\mathcal W_r^r(G^k(x,\cdot)_\ast W,G(x',\cdot)_\ast W)&\le\E\left[\sup_{t\in[0,1]}\left\vert G^k(x,B)_t-G^*(x',B)_t\right\vert^r\right]\\
	&\le\left(\frac{r}{r-1}\right)^r\E[\vert G^k(x,B)_1-G^*(x',B)_1\vert^r]\\
	&=\left(\frac{r}{r-1}\right)^r\E[\vert F_{\pi^k_x}^{-1}(F_\gamma(B_1))-F_{\pi^*_{x'}}^{-1}(F_\gamma(B_1)\vert^r]\\
	&=\left(\frac{r}{r-1}\right)^r\mathcal W_r^r(\pi^k_x,\pi^*_{x'}).
	\end{align*}
	
	We deduce that
	\begin{align*}
	\mathcal{AW}_r^r(\eta^k,\eta^*)&\le\left(\frac{r}{r-1}\right)^r\int_{\R\times\R}\left(\vert x-x'\vert^r+\mathcal W_r^r(\pi^k_x,\pi^*_{x'})\right)\,\chi^k(dx,dx')=\left(\frac{r}{r-1}\right)^r\mathcal{AW}_r^r(\pi^k,\pi^*),
	\end{align*}
	where the right-hand side vanishes as $k$ goes to $+\infty$ by virtue of Lemma \ref{lem:stability stretched bm}.
\end{proof}

\subsection{Proof of sufficiency of martingale $C$-monotonicity}
\label{sec:MartingaleMonotonicity}

In this section we prove the claim that martingale $C$-monotonicity is sufficient for optimality for \eqref{WMOT2}. Theorem \ref{thm:3r} is a special case of the next statement for $f(x)=1+|x|^r$ and $g(y)=1+|y|^r$.
{\color{black}
\begin{theorem}[Sufficiency] \label{thm:3}
	Let $f,g \in{\cal F}^{|\cdot|}(\R)$, $\mu\in\mathcal P_f(\R)$, $\nu \in \mathcal P_g(\R)$ be in convex order, and 
	$C\colon \R \times \mathcal P_g(\R) \to \R$ be a measurable cost function,
	continuous in the second argument and such that there exists a constant $K>0$ which satisfies
	\[
	\forall(x,p)\in \R \times\mathcal P_g(\R),\quad C(x,p)\le K\left(f(x) +\int_\R g(y) \,p(dy)\right),
	\]
Let $\Gamma$ be martingale $C$-monotone and $\pi \in \Pi_M(\mu,\nu)$ be such that we have \eqref{eq:martingale C monotone coupling}. 
Then $\pi$ is optimal for \eqref{WMOT2}.
\end{theorem}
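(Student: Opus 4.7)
The plan is to discretise $\mu$ via an i.i.d.\ sampling of its mass, build a finitely supported martingale coupling that is optimal for its own marginals by a direct appeal to Remark~\ref{rk:finiteSupportOptimal}, and then transfer this optimality to $\pi$ by the approximation and stability tools developed above.

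Concretely, let $X_1,X_2,\ldots$ be i.i.d.\ with law $\mu$ and set
\[
\mu^N:=\tfrac{1}{N}\sum_{i=1}^N \delta_{X_i},\quad \pi^N:=\tfrac{1}{N}\sum_{i=1}^N \delta_{X_i}\otimes \pi_{X_i},\quad \nu^N:=\tfrac{1}{N}\sum_{i=1}^N \pi_{X_i}.
\]
Then $\pi^N\in\Pi_M(\mu^N,\nu^N)$ is finitely supported, and since $(X_i,\pi_{X_i})\in\Gamma$ for $\mu$-a.e.\ $X_i$, a direct extension of Remark~\ref{rk:finiteSupportOptimal} allowing for repeated support points (which follows by applying the definition of martingale $C$-monotonicity to tuples with repetition) shows that $\pi^N$ minimises $V^M_C(\mu^N,\nu^N)$, with value $\tfrac{1}{N}\sum_{i=1}^N C(X_i,\pi_{X_i})$.

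On an event of full probability, the strong law of large numbers yields $\mu^N\to\mu$ in $\mathcal P_f(\R)$ (via SLLN for $f(X_i)$ and for a countable determining family of bounded continuous test functions), $\nu^N\to\nu$ in $\mathcal P_g(\R)$ (via SLLN for $\pi_{X_i}(g)$ and for $\pi_{X_i}(\varphi)$ along a countable determining class), and—after reducing to the case $\int_\R C(x,\pi_x)\,\mu(dx)>-\infty$ (otherwise $\pi$ is trivially optimal)—also $\tfrac{1}{N}\sum_{i=1}^N C(X_i,\pi_{X_i})\to\int_\R C(x,\pi_x)\,\mu(dx)$. The martingale structure of $\pi^N$ automatically gives $\mu^N\le_c\nu^N$ for every $N$. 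Now let $\pi'\in\Pi_M(\mu,\nu)$ be an arbitrary competitor. By Theorem~\ref{thm:1b} there exist martingale couplings $\pi'^N\in\Pi_M(\mu^N,\nu^N)$ with $J(\pi'^N)\to J(\pi')$ in $\mathcal P_{f\oplus\hat g}(\R\times\mathcal P(\R))$. The optimality of $\pi^N$ yields
\[
\tfrac{1}{N}\sum_{i=1}^N C(X_i,\pi_{X_i})\le \int_\R C(x,\pi'^N_x)\,\mu^N(dx)=\int_{\R\times\mathcal P_g(\R)} C(x,p)\,J(\pi'^N)(dx,dp).
\]
Taking $\limsup_N$, using the left-hand convergence and an upper-semicontinuity argument on the right (a Fatou-type estimate applied to the nonnegative function $K(f\oplus\hat g)-C(x,p)$, relying on the continuity of $C$ in the second variable, the growth bound $C\le K(f\oplus\hat g)$, and the convergence $J(\pi'^N)(f\oplus\hat g)\to J(\pi')(f\oplus\hat g)$) produces $\int_\R C(x,\pi_x)\,\mu(dx)\le \int_\R C(x,\pi'_x)\,\mu(dx)$. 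Since $\pi'\in\Pi_M(\mu,\nu)$ was arbitrary, $\pi$ is optimal.

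The main obstacle I anticipate is precisely this upper-semicontinuity step: the cost $C$ is only continuous in the second argument and bounded from one side, and the first marginals $\mu^N$ need not converge strongly to $\mu$, so one cannot invoke Theorem~\ref{thm:2upp} directly. The available route is to exploit the fact that $J(\pi'^N)$ is concentrated on the graph of the measurable map $x\mapsto \pi'^N_x$ together with the convergence of the $f\oplus\hat g$-integrals, and to apply the Carathéodory/Fatou statement from Lemma~\ref{lem:caratheodorylimits} in an appropriate form. A subsidiary technicality will be to arrange that all the a.s.\ SLLN convergences (in particular weak convergence of the random measures $\mu^N$ and $\nu^N$) take place simultaneously on a single event of full probability.
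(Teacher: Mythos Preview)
Your empirical-sampling construction is precisely the paper's first move, and the paper uses Remark~\ref{rk:finiteSupportOptimal} in the same way to certify optimality of $\pi^N$ for its own marginals. The divergence is in how the optimality is transferred to the limit.

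The obstacle you identify is a genuine gap, and your proposed route does not close it. Lemma~\ref{lem:caratheodorylimits}\ref{it:convlowCaratheodory} for lower Carath\'eodory integrands requires \emph{stable} convergence of $J(\pi'^N)$ to $J(\pi')$, which by Lemma~\ref{lem:convStable} is equivalent to weak convergence together with \emph{strong} convergence of the first marginals $\mu^N\to\mu$. Empirical measures of an i.i.d.\ sample do not converge strongly unless $\mu$ is purely atomic, so \ref{it:convlowCaratheodory} is unavailable. The fact that $J(\pi'^N)$ is concentrated on a graph does not help: the graph changes with $N$, and nothing forces the $x$-dependence of $C$ to behave well along the sequence. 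The same obstruction blocks a direct appeal to Theorem~\ref{thm:2upp}, since neither hypothesis \ref{it:assumption A} nor \ref{it:assumption B} holds.

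The paper resolves this by \emph{manufacturing} joint continuity before sampling. First, de la Vall\'ee Poussin gives $h\in\mathcal F_g^+(\R)$ with $\nu(h)<\infty$, so that the sublevel sets $B_N=\{p:p(h)\le N\}$ are compact in $\mathcal P_g(\R)$ (Lemma~\ref{lem: Wfball Wrcompact}). Then Lusin's theorem, applied to the Borel map $x\mapsto C(x,\cdot)|_{B_N}$ with values in $\mathcal C(B_N)$, produces a compact $K^\epsilon\subset\R$ with $\mu(K^\epsilon)\ge 1-\epsilon$ on which $C|_{K^\epsilon\times\mathcal P_h(\R)}$ is jointly continuous. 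One restricts $(\mu,\nu,\pi)$ to $K^\epsilon$, runs your empirical argument there, and invokes Theorem~\ref{thm:2upp} under \ref{it:assumption A} to get optimality of the restricted $\pi^\epsilon$. Finally, as $\epsilon\to 0$ the restricted first marginals $\mu^\epsilon$ converge \emph{strongly} to $\mu$, so Theorem~\ref{thm:2upp} under \ref{it:assumption B} transports optimality to $\pi$. This two-layer approximation (Lusin inside, strong convergence outside) is exactly what is missing from your outline.
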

For $g:X \to[1,+\infty)$ continuous, we denote 
\begin{equation}\label{defFg}
\mathcal F_g(X):=\{f:\R\to[1,+\infty)\text{ continuous }\colon\forall y\in X,\ f(y)\ge g(y)\}.
\end{equation}
and
\begin{equation}\label{defFg+}
\mathcal F_g^+(X):=\left\{f\in\mathcal F_g(X)\colon\exists h:\R_+\to[1,+\infty),\ \frac{h(t)}{t}\underset{t\to+\infty}{\longrightarrow}+\infty\text{ and }f=h\circ g\right\}.
\end{equation}}

\begin{proof}[Proof of Theorem \ref{thm:3}]
	{\color{black} 
	Let $h\in\mathcal F_g(\R)$ be such that $\nu(h)<+\infty$, whose purpose will be revealed later in the proof. 
	To demonstrate the main idea without further technical details, we assume for now that $\mu$ is concentrated on a Polish subspace $X$ of $\R$, and the restriction $C\vert_{X\times\mathcal P_h(\R)}$ is continuous.
	Moreover, we denote by $\tilde f$ the restriction of $f$ to $X$, thus $\tilde f \in \mathcal F^{|\cdot|}(X)$.
	Let $X_n:\Omega\to\R$, $n\in\N$ be independent random variables identically distributed according to $\mu$ and $\mathcal G\subset\Phi_{\tilde f \oplus \hat g}(X\times\mathcal P(\R))$ be a countable family which determines the convergence in $\mathcal P_{\tilde f \oplus \hat g}(X\times \mathcal P(\R))$ (see \cite[Theorem 4.5.(b)]{EtKu09}).
	By the law of large numbers, almost surely, for all $\psi\in\mathcal G \cup \{ C|_{X\otimes \mathcal P(\R)} \}$,
	\begin{align}\label{lawLargeNumbers}\begin{split}
	\frac1n\sum_{k=1}^n\int_{X\times\mathcal P(\R)}\psi(x,p)\,\delta_{(X_k,\pi_{X_k})}(dx,dp)&=\frac1n\sum_{k=1}^n\psi(X_k,\pi_{X_k})\\
	&\underset{n\to+\infty}{\longrightarrow}\E[\psi(X_1,\pi_{X_1})]\\
	&=\int_{X} \psi(x,\pi_x)\,\mu(dx)\\
	&=\int_{X\times\mathcal P(\R)}\psi(x,p)\,J(\pi)(dx,dp),
	\end{split}
	\end{align}
	
	Moreover, almost surely, for all $n\in\N$, 
	\begin{equation}\label{belongsMartingaleCMonotone}
	(X_n,\pi_{X_n})\in\Gamma\cap(X\times\mathcal P_g(\R)),
	\end{equation}
	and by the law of large numbers again, we have almost surely
	\begin{equation}\label{lawLargeNumbersnuf}
	\frac1n\sum_{k=1}^n\pi_{X_k}(h)\underset{n\to+\infty}{\longrightarrow}\E[\pi_{X_1}(h)]=\int_{X} \pi_x(h) \,\mu(dx)=\nu(h).
	\end{equation}
	
	Let then $\omega\in\Omega$ be such that \eqref{lawLargeNumbers}, \eqref{belongsMartingaleCMonotone} and \eqref{lawLargeNumbersnuf} hold when evaluated at $\omega$ and set $x_n=X_n(\omega)$ and $\pi^n(dx,dy)=\frac1n\sum_{k=1}^n\delta_{x_k}(dx)\,\pi_{x_k}(dy)$ for $n\in\N$. Then $\pi^n$ has first marginal $\mu^n=\frac1n\sum_{k=1}^n\delta_{x_k}$ and second marginal $\nu^n=\int_{x\in X}\pi_x(dy)\,\mu^n(dx)$.
	We deduce that $\pi^n$ is a martingale coupling between $\mu^n$ and $\nu^n$ such that
	\[
		J(\pi^n)=\frac1n\sum_{k=1}^n\delta_{(x_k,\pi_{x_k})}\underset{n\to+\infty}{\longrightarrow}J(\pi)\quad \text{in }\mathcal P_{\tilde f \oplus \hat g}(X\times \mathcal P(\R)).
	\]
	In particular we have convergence of the marginals in $\mathcal P_{\tilde f}(X)$ and $\mathcal P_g(\R)$ respectively.
	The second marginals are even converging in $\mathcal P_h(\R)$ since $\nu^n(h)$ converges to $\nu(h)$ as $n\to+\infty$ by \eqref{lawLargeNumbersnuf} evaluated at $\omega$.
	Thus, the convergence of $(J(\pi^n))_{n \in \N}$ to $J(\pi)$ even holds in $\mathcal P_{\tilde f \oplus \hat h}(X\times\mathcal P(\R))$ (see Definition \ref{AWf+gdef}) and therefore $\mathcal P_{\tilde f \oplus \hat h}(X\times\mathcal P_h(\R))$ by Lemma \ref{lem:equivalent topologies} \ref{it:equal topologies Pf+g} below.

	Since $(x,\pi^n_x)\in \Gamma$ for $\mu^n(dx)$-almost every $x$, we have according to Remark \ref{rk:finiteSupportOptimal} that
	\begin{equation*}
	V^M_C(\mu^n,\nu^n)=\int_{X} C(x,\pi^n_x) \, \mu^n(dx),
	\end{equation*}
	where we recall that the value function $V^M_C$ is defined in \eqref{WMOT2}.
	In that context, $C|_{X\times \mathcal P_h(\R)}$ is a continuous function on $X\times \mathcal P_h(\R)$ that is dominated from above by a positive multiple of $\tilde f \oplus \hat g$.
	We apply Theorem \ref{thm:2upp} to get
	\begin{align*}
	\int_{X} C(x,\pi_x)\,\mu(dx)&=\int_{X\times\mathcal P_h(\R)}C(x,p)\,J(\pi)(dx,dp)\\
	&=\lim_{n\to+\infty}\int_{X\times\mathcal P_h(\R)}C(x,p)\,J(\pi^n)(dx,dp)\\
	&=\lim_{n\to+\infty}\int_X C(x,\pi^n_x)\,\mu^n(dx)\\
	&=\lim_{n\to+\infty}V^M_C(\mu^n,\nu^n)\\
	&\le V^M_C(\mu,\nu),
	\end{align*}
	where we used \eqref{lawLargeNumbers} for the second equality.
	Hence, $\pi$ is optimal for $V^M_C(\mu,\nu)$.}
	
	Next, we drop the additional joint-continuity assumption on $C$. 
	Since $\nu(g)<+\infty$, there exists by the de La Vallée Poussin theorem $h\in\mathcal F_g^+(\R)$ such that $\nu(h)<+\infty$.
	For $N\in\N^*$, let $B_N = \{ p \in \mathcal P_g(\R) \colon p(h) \leq N \}$, which is a compact subset of $\mathcal P_g(\R)$ by Lemma \ref{lem: Wfball Wrcompact} below, and $\mathcal C(B_N)$ be the set of continuous functions from $B_N$ to $\R$, endowed with the topology of uniform convergence. The map $\phi^N \colon \R \to \mathcal C(B_N)$ given by $\phi^N(x) = C(x,\cdot)|_{B_N}$ is Borel measurable due to \cite[Theorem 4.55]{Ch06}.
	Let $\varepsilon\in(0,1)$. By Lusin's theorem there is for every $N\in\N^*$ a compact set $K^N\subset \R$ such that the restriction $\phi^N\vert_{K^N}$ is continuous and $\mu(K^N) \geq 1 - \frac{\epsilon}{2^N}$.
	We have
	\[  \mu\left( \bigcap_{N\in\N^*} K^N \right) \geq 1 - \sum_{N\in\N^*} \mu\left( (K^N)^c \right) \geq 1 - \sum_{N\in\N^*} \frac{\epsilon}{2^N} = 1 - \epsilon. \]
	
	Let $K^\epsilon = \bigcap_{N\in\N^*} K^N$, then for all $N\in\N^*$ the restriction $\phi^N\vert_{K^\varepsilon}$ is continuous.
	We claim that $C\vert_{K^\epsilon \times \mathcal P_h(\R)}$ is continuous w.r.t.\ the product topology of $\R \times \mathcal P_h(\R)$.
	To this end, take any sequence $(x_k,p_k)_{k \in \N} \in (K^\epsilon \times \mathcal P_h(\R))^\N$ with limit point $(x,p) \in K^\epsilon \times \mathcal P_h(\R)$.
	Since $p_k \to p$ in $\mathcal P_h(\R)$ as $k$ goes to $+\infty$, the sequence $(p_k(h))_{k\in\N}$ is convergent to $p(h)$ and therefore bounded so there exists $N \in \N$ such that $p,p_k \in B_N$ for all $k \in \N$.
	As $\phi^N(x_k)$ converges uniformly to $\phi^N(x)$, we have
	\[  C(x_k,p_k) = \phi^N(x_k)(p_k) \underset{k\to+\infty}{\longrightarrow} \phi^N(x)(p) = C(x,p). \]
	
	Therefore, $C\vert_{K^\epsilon \times \mathcal P_h(\R)}$ is continuous.
	
	Let $\mu^\varepsilon=\frac{1}{\mu(K^\varepsilon)}\mu\vert_{K^\varepsilon}$, $\pi^\varepsilon=\mu^\varepsilon\times\pi_x=\frac{1}{\mu(K^\varepsilon)}\pi\vert_{K^\epsilon\times\R}$ and $\nu^\varepsilon$ be the second marginal of $\pi^\varepsilon$. Obviously $\mu^\varepsilon$ is concentrated on $K^\varepsilon$. Since $\mu(K^\epsilon)\mu^\varepsilon\le\mu$ and $\pi^\varepsilon_x=\pi_x$, $\pi^\varepsilon$ is a martingale coupling and satisfies $(x,\pi^\varepsilon_x)\in\Gamma$ for $\mu^\varepsilon(dx)$-almost every $x$. Finally, $\mu(K^\epsilon)\nu^\varepsilon(h)= \int_{K^\epsilon}\pi_x(h)\,\mu(dx)\le\nu(h)<+\infty$, hence $\nu^\varepsilon\in\mathcal P_h(\R)$. Therefore the first part applied with $(K^\varepsilon,\mu^\varepsilon,\nu^\varepsilon,\pi^\varepsilon)$ replacing $(X,\mu,\nu,\pi)$ ensures that $\pi^\varepsilon$ is optimal for $V^M_C(\mu^\varepsilon,\nu^\varepsilon)$.
        
        \textcolor{black}{Since $C(x,p)$ is bounded from above by a positive multiple of $f(x)+\hat h(p)$ and $\mu(f)+\nu(h)<\infty$, either $\int_\R C(x,\pi_x)\mu(dx)=-\infty$ or $\int_\R |C(x,\pi_x)|\mu(dx)<\infty$. In the latter case, by Lebesgue's theorem, $\int_{K^\epsilon} C(x,\pi_x) \,\mu(dx)$ converges to $\int_{\R} C(x,\pi_x) \,\mu(dx)$ as $\epsilon\to 0$ and so does $$\frac{1}{\mu(K^\epsilon)}\int_{K^\epsilon} C(x,\pi_x) \,\mu(dx)=\int_{\R} C(x,\pi_x) \, \mu^\epsilon(dx)=V^M_C(\mu^\epsilon,\nu^\epsilon).$$ 
To synthesize the two cases, $\liminf_{\epsilon\to 0}V^M_C(\mu^\epsilon,\nu^\epsilon)\ge\int_\R C(x,\pi_x)\mu(dx)$. The marginals $(\mu^\epsilon)_{\epsilon > 0}$ converge to $\mu$ in $\mathcal P_f(\R)$ and strongly, whereas the marginals $(\nu^\epsilon)_{\epsilon > 0}$ converge to $\nu$
	in $\mathcal P_h(\R)$ for $\epsilon \searrow 0$. By Theorem \ref{thm:2upp}, we conclude that
	\[ V_C^M(\mu,\nu)\ge \limsup_{\epsilon\to 0} V_C^M(\mu^{\epsilon},\nu^{\epsilon})\ge \liminf_{\epsilon\to 0} V_C^M(\mu^{\epsilon},\nu^{\epsilon})\ge\int_{\R} C(x,\pi_x)\,\mu(dx),\]
	proving optimality of $\pi$.}
      \end{proof}
    In the same way, Corollary \ref{cor:finite optimalityr} is a special case of the next corollary for the choice $f(x)=1+|x|^r$ and $g(y)=1+|y|^r$.
\begin{corollary} \label{cor:finite optimality}
	Let $f,g \in{\cal F}^{|\cdot|}(\R)$, $\mu\in\mathcal P_f(\R)$ and $\nu \in \mathcal P_g(\R)$ be in convex order,
	$c\colon \R \times \R \to \R$ be measurable and such that $y\mapsto c(x,y)$ is continuous for all $x \in \R$ and $\sup_{(x,y)\in\R^2}\frac{|c(x,y)|}{f(x)+g(y)}<\infty$.	
	Then $\pi \in \Pi_M(\mu,\nu)$ is concentrated on a set finitely optimal for $c$ if and only if $\pi$ is optimal for \eqref{MOT}.
\end{corollary}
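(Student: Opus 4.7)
The plan is to prove the two implications separately. The direction \emph{``$\pi$ optimal for \eqref{MOT} $\Rightarrow$ $\pi$ concentrated on a finitely optimal set''} is the classical MOT monotonicity principle, see for instance \cite{BeJu16}; its proof uses only measurability of $c$ and measurable selection, so it transfers to the present setting without change. The content of the corollary is therefore the reverse implication, which I would derive from Theorem \ref{thm:3} by linearising the cost.

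To this end, set $C(x,p) := \int_{\R} c(x,y)\,p(dy)$. The uniform bound $|c(x,y)| \le K(f(x)+g(y))$ implies $c(x,\cdot) \in \Phi_g(\R)$, so $p \mapsto C(x,p)$ is continuous on $\mathcal P_g(\R)$; the same estimate gives $|C(x,p)| \le K(f(x)+\int g\,dp)$, matching the growth hypothesis of Theorem \ref{thm:3}. As candidate martingale $C$-monotone set take
\[
\Gamma := \bigl\{(x,p) \in \R \times \mathcal P_g(\R) : \textstyle\int y\,p(dy) = x \text{ and } p(\tilde{\Gamma}_x) = 1 \bigr\},
\]
which is Borel by a standard section argument. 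Disintegration of $\pi$ against $\tilde{\Gamma}$ yields $(x,\pi_x) \in \Gamma$ for $\mu$-a.e.\ $x$, so it is enough to verify that $\Gamma$ is martingale $C$-monotone; Theorem \ref{thm:3} then concludes optimality of $\pi$.

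The crux is this verification of $C$-monotonicity. Take $(x_i,p_i)_{i=1}^N \in \Gamma$ and competitors $(q_i)_{i=1}^N$ with $\sum_i p_i = \sum_i q_i$ and $\int y\,q_i(dy) = x_i$. The probability measures $\alpha := \frac{1}{N}\sum_i \delta_{x_i} \otimes p_i$ and $\alpha' := \frac{1}{N}\sum_i \delta_{x_i} \otimes q_i$ satisfy $\alpha(\tilde{\Gamma}) = 1$, and $\alpha'$ is a competitor of $\alpha$; were $\alpha,\alpha'$ finitely supported, $\sum_i C(x_i,p_i) \le \sum_i C(x_i,q_i)$ would follow at once from finite optimality of $\tilde{\Gamma}$. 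The main obstacle is therefore to produce finitely supported approximations $(p_i^n,q_i^n)$ simultaneously preserving (a) the support condition $p_i^n(\tilde{\Gamma}_{x_i}) = 1$, (b) the means $\int y\,p_i^n(dy) = \int y\,q_i^n(dy) = x_i$, and (c) the aggregate identity $\sum_i p_i^n = \sum_i q_i^n$, with $p_i^n \to p_i$ and $q_i^n \to q_i$ in $\mathcal P_g(\R)$. I would realise this on a common probability space carrying $(I,J,Z)$ with $I,J$ uniform on $\{1,\dots,N\}$, $\mathcal L(Z\mid I{=}i) = p_i$ and $\mathcal L(Z\mid J{=}j) = q_j$ (possible since $\sum p_i = \sum q_j$), setting $Z^n := \mathbb{E}[Z \mid \mathcal F_n, I, J]$ along a refining sequence of finite partitions of $\R$ and taking $p_i^n := \mathcal L(Z^n \mid I{=}i)$, $q_j^n := \mathcal L(Z^n \mid J{=}j)$: these are finitely supported, (b) holds by the tower property, and (c) because both sums equal $N\,\mathcal L(Z^n)$. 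The delicate point is (a), since atoms of $p_i^n$ are conditional means of $Z$ over partition cells and a priori only lie in the convex hull of $\mathrm{supp}(p_i) \subset \tilde{\Gamma}_{x_i}$; one repairs this by refining the partitions so that each cell meets each $\mathrm{supp}(p_i)$ in a connected piece, and absorbing the remaining drift with auxiliary atoms at two points of $\tilde{\Gamma}_{x_i}$ bracketing $x_i$. Passing to the limit via continuity of $c(x_i,\cdot)$ and dominated convergence driven by the growth bound then delivers $\sum_i C(x_i,p_i) \le \sum_i C(x_i,q_i)$, establishing martingale $C$-monotonicity of $\Gamma$ and completing the proof.
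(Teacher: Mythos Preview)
Your overall strategy coincides with the paper's: define $C(x,p)=\int c(x,y)\,p(dy)$, set $\Gamma$ to be the pairs $(x,p)$ with $p$ concentrated on the section $\tilde\Gamma_x$, check that $(x,\pi_x)\in\Gamma$ $\mu$-a.e., verify martingale $C$-monotonicity of $\Gamma$, and apply Theorem~\ref{thm:3}. The substance is, as you correctly identify, the verification of martingale $C$-monotonicity, which reduces to producing finitely supported approximations $(p_i^n,q_i^n)$ satisfying your conditions (a)--(c).

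Your conditional-expectation construction on a common probability space is elegant for (b) and (c): the tower property gives $\int y\,p_i^n(dy)=\int y\,q_i^n(dy)$ and the common law $\mathcal L(Z^n)$ gives $\sum_i p_i^n=\sum_i q_i^n$. But (a) genuinely fails, and your repair does not work. The atoms of $p_i^n$ are conditional means of $Z$ over cells of $\mathcal F_n\vee\sigma(I,J)$; these are convex combinations of points in $\tilde\Gamma_{x_i}$ and lie in its convex hull, not in $\tilde\Gamma_{x_i}$ itself. Refining partitions so that cells meet $\mathrm{supp}(p_i)$ in connected pieces does not help, since $\tilde\Gamma_{x_i}$ is merely Borel (think of a Cantor-type set) and $\mathrm{supp}(p_i)\subset\overline{\tilde\Gamma_{x_i}}$ need not be contained in $\tilde\Gamma_{x_i}$. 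Absorbing the defect with auxiliary atoms at bracketing points of $\tilde\Gamma_{x_i}$ is worse: any such modification of $p_i^n$ is $i$-dependent and destroys the aggregate identity (c), since $\sum_i p_i^n$ then no longer equals $N\,\mathcal L(Z^n)$.

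The paper resolves this by \emph{decoupling} the three constraints instead of seeking a single construction that delivers them simultaneously. First, each $p_i$ is approximated independently by finitely supported $p_i^k$ concentrated on $\tilde\Gamma_{x_i}$ via a quantile construction (Lemma~\ref{lem:approxsupfinwrho}): one sets $p_U^k=\frac1k\sum_{j=1}^k\delta_{F_{p_i}^{-1}((j-U)/k)}$ and observes that its intensity is $p_i$, so $p_U^k(\tilde\Gamma_{x_i})=1$ almost surely. The means $y_i^k:=\int y\,p_i^k(dy)$ are \emph{not} forced to equal $x_i$; they merely converge. Second, and this is the key step you are missing, the $q_i^k$ are obtained by invoking Theorem~\ref{thm:1b}: the martingale coupling $\frac1N\sum_i\delta_{y_i}\otimes q_i\in\Pi_M\bigl(\frac1N\sum\delta_{y_i},\frac1N\sum q_i\bigr)$ is approximated in $\mathcal{AW}_{f\oplus g}$ by martingale couplings in $\Pi_M\bigl(\frac1N\sum\delta_{y_i^k},\frac1N\sum p_i^k\bigr)$, which automatically yields $q_i^k$ with $\sum_i q_i^k=\sum_i p_i^k$ and $\int y\,q_i^k(dy)=y_i^k=\int y\,p_i^k(dy)$. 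Thus (b) and (c) come for free from the approximation theorem, and (a) from the quantile construction. Finite optimality of $\tilde\Gamma$ then gives $\sum_i C(x_i,p_i^k)\le\sum_i C(x_i,q_i^k)$, and one passes to the limit.
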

The proof of Corollary \ref{cor:finite optimality} relies on the next lemma.
\begin{lemma}\label{lem:approxsupfinwrho}
Let $g\in{\cal F}^{|\cdot|}(\R)$, $p\in\mathcal P_g(\R)$ and $A$ be a Borel subset of $\R$ such that $p(A)=1$. Then there exists a sequence of probability measures finitely supported in $A$ which converges to $p$ in ${\mathcal P}_g(\R)$.
 \end{lemma}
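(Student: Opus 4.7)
The plan is a standard truncation-plus-discretisation argument adapted to the topology of $\mathcal P_g(\R)$. Since $\R$ is Polish and $p(A)=1$, inner regularity provides an increasing sequence of compact sets $K_n\subset A$ with $p(K_n)\uparrow 1$. Using that $p(g)<+\infty$ together with monotone convergence applied to $g\1_{K_n}$, we also obtain $\int_{K_n}g\,dp\to p(g)$.

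Next, on each compact $K_n$ the continuous function $g$ is uniformly continuous, so I can choose $\delta_n\in(0,1/n]$ such that $|g(x)-g(y)|<1/n$ whenever $x,y\in K_n$ and $|x-y|<\delta_n$. I partition $K_n$ into finitely many disjoint Borel pieces $E_{n,1},\dots,E_{n,m_n}$ each of diameter less than $\delta_n$, select $x_{n,i}\in E_{n,i}\subset A$ for every non-empty piece, fix once and for all a point $x_0\in A$ (non-empty as $p(A)=1$), and set
\[
p_n:=\sum_{i=1}^{m_n}p(E_{n,i})\,\delta_{x_{n,i}}+\bigl(1-p(K_n)\bigr)\delta_{x_0}.
\]
Each $p_n$ is a probability measure finitely supported in $A$.

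By Definition \ref{def:Pf}(i) applied with $f=g$, it suffices to show that $p_n\to p$ weakly and $p_n(g)\to p(g)$. Writing $p_n(\phi)-p(\phi)$ as $\sum_i\int_{E_{n,i}}\bigl(\phi(x_{n,i})-\phi(y)\bigr)p(dy)+(1-p(K_n))\phi(x_0)-\int_{\R\setminus K_n}\phi\,dp$, for any bounded Lipschitz $\phi$ with constant $L$ and $\|\phi\|_\infty\le M$ one obtains
\[
|p_n(\phi)-p(\phi)|\le L\delta_n+2M\bigl(1-p(K_n)\bigr)\underset{n\to+\infty}{\longrightarrow}0,
\]
and since bounded Lipschitz functions characterise weak convergence on $\R$, this gives the first condition. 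The same decomposition with $\phi$ replaced by $g$, together with the calibration of $\delta_n$ on each $E_{n,i}\subset K_n$, yields
\[
|p_n(g)-p(g)|\le \tfrac{1}{n}+\Bigl|\int_{K_n}g\,dp-p(g)\Bigr|+(1-p(K_n))g(x_0)\underset{n\to+\infty}{\longrightarrow}0,
\]
where the middle term was handled in the first step.

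There is no genuine obstacle: the only subtlety is that the mesh $\delta_n$ must be tuned to the modulus of continuity of $g$ on $K_n$ (which is $n$-dependent since $g$ need not be uniformly continuous on $\R$), instead of being a fixed rate, and one must ensure the representatives $x_{n,i}$ and the auxiliary point $x_0$ lie in $A$, which is automatic from the inclusion $K_n\subset A$.
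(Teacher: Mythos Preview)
Your proof is correct and complete. The approach, however, differs from the paper's.

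The paper gives a probabilistic argument specific to the real line: for $U$ uniform on $(0,1)$ it forms the randomised empirical measures $p^n_U=\tfrac1n\sum_{j=1}^n\delta_{F_p^{-1}((j-U)/n)}$ built from the quantile function of $p$. Because the intensity $\E[p^n_U]$ equals $p$, one has $\E[p^n_U(A)]=1$ and $\E[p^n_U(g)]=p(g)$ for every $n$; combining almost-sure weak convergence with Fatou's lemma then forces $\liminf_n p^n_U(g)=p(g)$ almost surely, and one extracts a suitable subsequence.

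Your truncation-plus-discretisation route is more elementary and deterministic: no randomisation, no quantile functions, no subsequence extraction. It also generalises verbatim to any Polish space (the paper's quantile trick does not). The paper's argument, on the other hand, avoids having to calibrate a mesh to the modulus of continuity of $g$ on each $K_n$ and produces atoms that are automatically quantiles of $p$, which can be convenient in other contexts. Both proofs are short; yours is arguably the more transparent one here.
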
 
\begin{proof}[Proof of Corollary \ref{cor:finite optimality}]
   The sufficient condition is stated in Lemma 1.11 \cite{BeJu16} under mere measurability of the cost function $c$. For the necessary condition let us suppose that $\pi$ is concentrated on a set $\tilde\Gamma$ finitely optimal for $c$ and define $\Gamma=\{(x,p)\in\R\times{\mathcal P}_g(\R):\int_\R\1_{\tilde \Gamma}(x,y)p(dy)=1\}$ and $\R\times{\mathcal P}_g(\R)\ni(x,p)\mapsto C(x,p)=\int_\R c(x,y)p(dy)$. The growth condition satisfied by $c$ and the continuity of this function in its second variable ensure that $C$ is continuous in its second argument and satisfies the growth assumption in Theorem \ref{thm:3}. 
   Since $1=\pi(\tilde\Gamma)=\int_\R\int_\R\1_{\tilde \Gamma}(x,y)\pi_x(dy)\mu(dx)$ and $\int_\R\int_\R g(y)\pi_x(dy)\mu(dx)=\int_\R g(y)\nu(dy)<\infty$, \eqref{eq:martingale C monotone coupling} holds.  Let $(x_1,p_1),\ldots,(x_N,p_N) \in \Gamma$, and $q_1,\ldots,q_N \in \mathcal P(\R)$ such that $\sum_{i=1}^N p_i = \sum_{i=1}^N q_i$, which implies $q_i\in{\mathcal P}_g(\R)$ for each $i\in\{1,\cdots,N\}$, and that $\int_\R y \, p_i(dy) = \int_\R y \, q_i(dy)$ for each $i\in\{1,\cdots,N\}$. 
   For each $i\in\{1,\cdots,N\}$, by Lemma \ref{lem:approxsupfinwrho}, there is a sequence $(p^k_i)_{k\in\N}$ of finitely supported probability measures such that $1=\int_\R\1_{\tilde \Gamma}(x_i,y)p^k_i(dy)$ converging to $p_i$ in ${\mathcal P}_g(\R)$, which, since $g\in{\cal F}^{|\cdot|}(\R)$, ensures that \textcolor{black}{$y_i^k:=\int_\R y p_i^k(dy)\underset{k\to\infty}{\longrightarrow}\int_\R y p_i(dy)=\int_\R y q_i(dy):=y_i$. Since $\frac 1 N\sum_{i=1}^N\delta_{y_i^k}$ (resp. $\frac{1}{N}\sum_{i=1}^Np_i^k$) converges to $\frac 1 N\sum_{i=1}^N\delta_{y_i}$ (resp. $\frac{1}{N}\sum_{i=1}^Np_i=\frac{1}{N}\sum_{i=1}^Nq_i$) in $\mathcal P_f(\R)$ (resp. in $\mathcal P_g(\R)$) as $k\to\infty$, by Theorem \ref{thm:1b} applied with $X=Y=\R$, there exists a sequence $\frac{1}{N}\sum_{i=1}^N \delta_{y_i^k}(dx)q_i^k(dy)$ of elements of $\Pi_M(\frac{1}{N}\sum_{i=1}^N \delta_{y_i^k},\frac{1}{N}\sum_{i=1}^Np_i^k)$ converging to the element $\frac{1}{N}\sum_{i=1}^N \delta_{y_i}(dx)q_i(dy)$ of $\Pi_M(\frac{1}{N}\sum_{i=1}^N \delta_{y_i},\frac{1}{N}\sum_{i=1}^Np_i)$ in $\mathcal{AW}_{f\oplus g}$. Up to replacing $(y_i,p_i,q_i,(p_i^k)_{k\in\N})$ by their image under a well chosen real translation depending on $i$ (one should then replace the resulting $q_i^k$ by their image by the inverse translation), we may assume that the $y_i$ are distinct so that the convergence in $\mathcal{AW}_{f\oplus g}$ implies that for each $i\in\{1,\cdots,N\}$, $q_i^k$ converges to $q_i$ in ${\mathcal P}_g(\R)$ as $k\to\infty$. Since $\frac{1}{N}\sum_{i=1}^N q^k_i=\frac{1}{N}\sum_{i=1}^N p^k_i$ and, by the martingale property, $\int_\R y q_i^k(dy)=y_i^k=\int_\R y p_i^k(dy)$, $\frac 1 N\sum_{i=1}^N\delta_{x_i}(dx)q^k_i(dy)$ is a competitor of $\frac 1 N\sum_{i=1}^N\delta_{x_i}(dx)p^k_i(dy)$ which is finitely supported on $\tilde \Gamma$. Hence  $$\sum_{i=1}^NC(x_i,p^k_i)=N\int_{\R\times\R}c(x,y)\frac 1 N\sum_{i=1}^N\delta_{x_i}(dx)p^k_i(dy)\le N\int_{\R\times\R}c(x,y)\frac 1 N\sum_{i=1}^N\delta_{x_i}(dx)q^k_i(dy)=\sum_{i=1}^NC(x_i,q^k_i).$$} Letting $k\to\infty$, we deduce by continuity of $C$ in the measure argument that $\sum_{i=1}^NC(x_i,p_i)\le \sum_{i=1}^NC(x_i,q_i)$. Therefore $\Gamma$ is martingale $C$-monotone and by Theorem \ref{thm:3}, $\pi$ is optimal for  \eqref{WMOT2} 
 and equivalently for \eqref{MOT} in view of the definition of $C$.

\end{proof}

\begin{proof}[Proof of Lemma \ref{lem:approxsupfinwrho}]
   For $U$ uniformly distributed on $(0,1)$ and $n\in\N^*$, let 
 $p^n_U:=\frac{1}{n}\sum_{j=1}^n\delta_{F_p^{-1}(\frac{j-U}{n})}$ 
 . For $\varphi:\R\to\R$ measurable and bounded, one has
 $$\E\left[\int_\R\varphi(y)p^n_U(dy)\right]=\frac 1 n\sum_{j=1}^n\E\left[\varphi\left(F_p^{-1}\left(\frac{j-U}{n}\right)\right)\right]=\frac 1 n\sum_{j=1}^nn\int_{\frac{j-1}{n}}^{\frac jn}\varphi(F_p^{-1}(v))dv=\int_0^1\varphi(F_p^{-1}(v))dv,$$
 with the last integral equal to $\int_\R\varphi(y)p(dy)$ by the inverse transform sampling. Hence the intensity of the random probability $p^n_U$ is equal to $p$. In particular, $\E[p^n_U(A)]=p(A)=1$ and \begin{equation}
   \E\left[\int_\R g(y)p^n_U(dy)\right]=\int_\R g(y)p(dy).\label{eq:espegal}
 \end{equation} Moreover, when $\varphi$ is continuous and bounded,

 \begin{align*}
   \int_\R \varphi(y)p^n_U(dy)=\int_0^1\varphi\left(F_p^{-1}\left(\frac{\lceil nv\rceil-U}{n}\right)\right)dv\rightarrow\int_0^1\varphi(F_p^{-1}(v))dv=\int_0^1\varphi(y)p(dy)\mbox{ as }n\to\infty,
 \end{align*}
 by Lebesgue's theorem, since the set of discontinuities of $F_p^{-1}$ is at most countable. Hence $p^n_U$ converges weakly to $p$ as $n\to\infty$ from which we deduce that $\liminf_{n\to\infty}\int_\R g(y)p^n_U(dy)\ge \int_\R g(y)p(dy)$. With \eqref{eq:espegal} and Fatou lemma, we deduce that a.s. $\liminf_{n\to\infty}\int_\R g(y)p^n_U(dy)=\int_\R g(y)p(dy)$ so that a.s. there is a subsequence of $(p^n_U)_n$ giving full weight to $A$ and converging to $p$ in ${\mathcal P}_g(\R)$.
 \end{proof} 

\appendix

\section{Appendix}
\label{sec:AppendixApplicationPaper}

We recall that the adapted weak topology can be defined as the initial topology under the embedding map $J$ from $\mathcal P(X\times Y)$ to $\mathcal P(X\times\mathcal P(Y))$, namely
\begin{equation}\label{defJ2}
J:\mathcal P(X\times Y)\ni\pi=\mu\otimes\pi_x\mapsto\mu(dx)\,\delta_{\pi_x}(dp)\in\mathcal P(X\times\mathcal P(Y)).
\end{equation}

Conversely, we can associate to a probability measure $P\in\mathcal P(\mathcal P(Y))$ its intensity $I(P)$
\begin{equation}
   I(P)(dy):=\int_{p\in\mathcal P(Y)}p(dy)\,P(dp)\in\mathcal P(Y).\label{def:int}
\end{equation} 
For the extended space $\mathcal P(X\times\mathcal P(Y))$ we naturally define the extended intensity $\hat I$ by
\begin{equation}\label{defExtendedIntensity}
\hat I:\mathcal P(X\times\mathcal P(Y))\ni P\mapsto\int_{p\in\mathcal P(Y)}\,p(dy)\,P(dx,dp)\in\mathcal P(X\times Y),
\end{equation}
which associates to each $P \in \mathcal P(X \times \mathcal P(Y))$ a coupling $\hat I(P) \in \mathcal P(X \times Y)$. We note that $\hat I$ is the left-inverse of $J$.

For $(\mu,\nu)\in\mathcal P(X)\times\mathcal P(Y)$, we define the set of extended couplings $\Lambda(\mu,\nu)$ between $\mu$ and $\nu$ as the set of probability measures on $\mathcal P(X\times\mathcal P(Y))$ whose extended intensity is a coupling between $\mu$ and $\nu$, that is
\begin{equation}\label{defLambda}
\Lambda(\mu,\nu)=\left\{P=\mu\otimes P_x\in\mathcal P(X\times\mathcal P(Y)) \colon \hat I(P) \in \Pi(\mu,\nu) \right\}.
\end{equation}

If $f \colon X \to \R^+$ and $g \colon Y \to \R^+$ are measurable functions, then any $P \in \Lambda(\mu,\nu)$ satisfies
\begin{equation}\label{moments P}
	\int_{X\times\mathcal P(Y)} f(x) \,P(dx,dp)= \mu(f), \quad\int_{X\times\mathcal P(Y)}\int_Yg(y)\,p(dy)\,P(dx,dp)=\nu(g).
\end{equation}

For $\mu,\nu\in\mathcal P^1(\R)$, the martingale counterpart $\Lambda_M(\mu,\nu)$ of $\Lambda(\mu,\nu)$ is given by the set of probabilities on $\mathcal P^1(\R\times\mathcal P^1(\R))$ satisfying
\begin{equation}\label{defLambdaM}
\Lambda_M(\mu,\nu)=\left\{P\in\Lambda(\mu,\nu)\colon\int_{\R} y\,p(dy)=x,\  P(dx,dp)\text{-a.s.}\right\}.
\end{equation}

\subsection{Extension from \texorpdfstring{$\mathcal P^r$}{Pr} to \texorpdfstring{$\mathcal P_f$}{Pf}.}
\label{sec:extensionPrPf}

We recall that unless explicitly stated otherwise, $\mathcal P(Y)$ is endowed with the weak convergence topology, and for any continuous map $f:Y\to[1,+\infty)$ we endow the space $\mathcal P_f(Y)=\{p\in\mathcal P(Y)\colon p(f)<+\infty\}$ with the topology induced by the following convergence: a sequence $(p_k)_{k\in\N}\in\mathcal P_f(Y)^\N$ converges in $\mathcal P_f(Y)$ to $p$ if and only if $p_k$ converges weakly to $p$ and $p_k(f)$ converges to $p(f)$ as $k\to+\infty$.

As mentioned in Section \ref{sec:main results2}, this extension emerged from the need to overcome the inconvenience of the non-compactness of the $\mathcal W_r$-balls $\{p\in\mathcal P^r(Y)\colon\mathcal W_r(p,\delta_{y_0})\le R\}$, $R>0$, for the $\mathcal W_r$-topology.
The following lemmas show that this extension enjoys flexibility as the usual Wasserstein topology and most importantly benefits of a helpful compactness result, see Lemma \ref{lem: Wfball Wrcompact} below.

\begin{remark} \label{rem:Pf}
	We continue with some remarks on the structure of $\mathcal P_f(Y)$:
	\begin{enumerate}[(1)]
		\item \label{it:rem Pf1}
		Convergence in $\mathcal P_f(Y)$ can be described differently: let $(p_k)_{k \in \N}$ converge
		to $p$ in $\mathcal P_f(Y)$, and let $g \in\mathcal C(Y)$ be such that $0 \leq g \leq f$.
                We have $p(g)\le\liminf_{k\to+\infty}p_k(g)$ and $p(f)-p(g)=p(f-g)\le\liminf_{k\to+\infty}p_k(f-g)=p(f)-\limsup_{k\to+\infty}p_k(g)$, hence $\limsup_{k\to+\infty}p_k(g)\le p(g)$. We deduce that
		\begin{equation} 
		\label{eq:Pf convergence}
		p_k\underset{k\to+\infty}{\longrightarrow}p \text{ in }\mathcal P_f(Y) \iff p_k(g)
		\underset{k\to+\infty}{\longrightarrow} p(g), \quad \forall g\in \Phi_f(Y),
		\end{equation}
		when $\Phi_f(Y) := \{g \in \mathcal C(Y) \colon g\text{ is absolutely dominated by a positive multiple of }f \}$.
		
		It is immediate that for $r\ge1$, this topology is finer than the one induced by $\mathcal W_r$ on $\mathcal P_f(Y)$ if $f$ 
              is  bounded from below by $y\mapsto1+d_Y^r(y,y_0)$.
		\item \label{it:rem Pf2}
		The set $\mathcal P_f(Y)$ is naturally embedded into the set $\mathcal M_+(Y)$ of all
		bounded positive Borel measures on $Y$, endowed with the weak topology, via the following continuous injection
		\[
		\iota \colon \mathcal P_f(Y) \to \mathcal M_+(Y),\quad \iota(p)(dy) = f(y)\, p(dy).
		\]
		
		Clearly, the topology on $\mathcal P_f(Y)$ coincides with the initial topology under $\iota$.
		Even more, the set $\iota(\mathcal P_f(Y)) = \{ m \in \mathcal M_+(Y) \colon m(\frac{1}{f}) = 1\}$
		is a closed subset of $\mathcal M_+(Y)$ since $\frac1f$ is continuous and bounded. As such, we deduce that $\mathcal P_f(Y)$ is a Polish
		space.  
		\item \label{it:rem Pf3}
		By \cite[Theorem 8.3.2 and the preceding discussion]{Bo00}, we have that the weak topology on
		$\mathcal M_+(Y)$ is induced by the norm
		\[ \lVert m_1 - m_2 \rVert_0 := \sup_{\substack{g \colon Y \to [-1,1] \\ g \text{ is 1-Lipschitz}} } (m_1(g) - m_2(g)). \]
		
		This permits us to define a metric on $\mathcal P_f(Y)$ via
		\begin{equation}\label{defWbarf2}
		\overline{ \mathcal W }_f(p,q) := \sup_{\substack{g \colon Y \to [-1,1], \\ g \text{ is
					$1$-Lipschitz}}} (p(fg) - q(fg)) = \lVert \iota(p) - \iota(q) \rVert_0.
		\end{equation}
		
		Thus, $\overline{ \mathcal W }_f$ is a complete metric compatible with the topology
		on $\mathcal P_f(Y)$.               
	\end{enumerate}
\end{remark}
From now on, we equip $\mathcal P_f(Y)$ with $\overline{\mathcal W}_f$.
A continuous function $f \colon Y \to [1,+\infty)$ can naturally be lifted to a continuous function
$\hat f \colon \mathcal P_f(Y) \to [1,+\infty)$ by setting
\begin{equation}
\label{eq:lifted f}
\hat f(p) := p(f).
\end{equation}
For any probability measure $P\in\mathcal P(\mathcal P(Y))$, we then have $P(\hat f)=I(P)(f)$ where the intensity $I(P)$ is defined in \eqref{def:int}.

For two maps $f:X\to\R$ and $g:Y\to\R$, we denote $f\oplus g:X\times Y\ni(x,y)\mapsto f(x)+g(y)$.

As we are solely interested in topological properties, the next lemma shows that we can freely switch between 
the spaces $\mathcal P_{\hat f}(\mathcal P(Y))$, $\mathcal P_{\hat f}(\mathcal P_f(Y))$, and
$\mathcal P^1(\mathcal P_f(Y))$, where the latter is defined as $\mathcal P^r(X)$ with $(1,\mathcal P_f(Y),\overline{\mathcal W}_f)$ replacing $(r,X,d_X)$.

\begin{lemma}\label{lem:equivalent topologies} 
	\begin{enumerate}[(a)]
		\item\label{it:equal topologies Pf} Let $f:Y\to[1,+\infty)$ be continuous. Then
		\begin{equation}\label{lem:equal topologies Pf}
		\mathcal P_{\hat f}(\mathcal P(Y))=\mathcal P_{\hat f}(\mathcal P_f(Y)),
		\end{equation}
		and their topologies coincide. 
		Moreover, if $\mathcal P_f(Y)$ is endowed with the metric $\overline{\mathcal W}_f$, see \eqref{defWbarf}, then
		\begin{equation}\label{lem:equal topologies Pf2}
		\mathcal P_{\hat f}(\mathcal P(Y))=\mathcal P_{\hat f}(\mathcal P_f(Y))=\mathcal P^1(\mathcal P_f(Y)),
		\end{equation}
		and their topologies are equal.
		\item\label{it:equal topologies Pf+g} Let $f:X\to[1,+\infty)$ and $g:Y\to[1,+\infty)$ be continuous. Then
		\begin{equation}\label{lem:equel topologies Pf+g}
		\mathcal P_{f\oplus\hat g}(X\times\mathcal P(Y))=\mathcal P_{f\oplus\hat g}(X\times\mathcal P_g(Y)),
		\end{equation}
		and their topologies coincide.
	\end{enumerate}
\end{lemma}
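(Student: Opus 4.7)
The plan is to handle part (a) in three stages and then run the same arguments on the product space for part (b).

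For the set equality $\mathcal P_{\hat f}(\mathcal P(Y))=\mathcal P_{\hat f}(\mathcal P_f(Y))$, I would extend $\hat f$ by $+\infty$ off $\mathcal P_f(Y)$. Since $\hat f=\sup_{N\in\N}\widehat{f\wedge N}$ with each $\widehat{f\wedge N}$ weakly continuous, $\hat f$ is lower semicontinuous on $\mathcal P(Y)$ and $\mathcal P_f(Y)=\{\hat f<+\infty\}$ is Borel in $\mathcal P(Y)$. Any $P\in\mathcal P(\mathcal P(Y))$ with $P(\hat f)<+\infty$ is thus supported in $\mathcal P_f(Y)$. The Borel $\sigma$-algebras induced on $\mathcal P_f(Y)$ by the finer topology and by the trace of the weak one coincide, because the finer topology is generated by a countable determining family of maps $p\mapsto p(g)$, $g\in\Phi_f(Y)$, each of which is a monotone limit of the weakly continuous maps $p\mapsto p((g\wedge N)\vee(-N))$. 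The identification of the two sets of measures follows.

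For the topological coincidence, the easy direction is immediate: the inclusion $\mathcal P_f(Y)\hookrightarrow\mathcal P(Y)$ is continuous, so continuous bounded test functions on $\mathcal P(Y)$ restrict to ones on $\mathcal P_f(Y)$. For the reverse, let $P_k\to P$ in $\mathcal P_{\hat f}(\mathcal P(Y))$. By Skorokhod's representation on the Polish space $\mathcal P(Y)$ I would construct $\xi_k\sim P_k$ and $\xi\sim P$ on a common probability space with $\xi_k\to\xi$ weakly almost surely. Lower semicontinuity of $\hat f$ gives $(\hat f(\xi_k)-\hat f(\xi))^-\to 0$ a.s., dominated by $\hat f(\xi)\in L^1$, so dominated convergence yields $\E[(\hat f(\xi_k)-\hat f(\xi))^-]\to 0$. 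Combined with $\E[\hat f(\xi_k)-\hat f(\xi)]\to 0$, this forces $L^1$-convergence $\hat f(\xi_k)\to\hat f(\xi)$, hence a.s.\ convergence along a subsequence. By Definition \ref{def:Pf}, the corresponding $\xi_{k_j}\to\xi$ in $\mathcal P_f(Y)$ a.s., so bounded convergence yields $P_{k_j}(H)\to P(H)$ for every continuous bounded $H\colon\mathcal P_f(Y)\to\R$; a subsequence-of-subsequences argument promotes this to $P_k\to P$ weakly in $\mathcal P(\mathcal P_f(Y))$, which is metrizable since $\mathcal P_f(Y)$ is Polish.

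For the identification with $\mathcal P^1(\mathcal P_f(Y))$, I would exploit the two-sided estimate $|\hat f(p)-\hat f(p_0)|\le\overline{\mathcal W}_f(p,p_0)\le\hat f(p)+\hat f(p_0)$, the upper bound coming from $|g|\le 1$ in \eqref{defWbarf} and the lower bound from testing the supremum against the constant $\pm 1$ Lipschitz functions. With $p_0$ fixed, $\mathcal P^1$-finiteness and first-moment convergence become equivalent to their $\hat f$-analogues, which settles \eqref{lem:equal topologies Pf2}. Part (b) is proved by exactly the same Skorokhod argument on the Polish space $X\times\mathcal P(Y)$: continuity of $f$ gives $f(X_k)\to f(X)$ a.s.\ automatically, and Fatou applied separately to $f$ and $\hat g$ together with $\E[f(X_k)+\hat g(\xi_k)]\to\E[f(X)+\hat g(\xi)]$ splits the moment convergence into $\E[f(X_k)]\to\E[f(X)]$ and $\E[\hat g(\xi_k)]\to\E[\hat g(\xi)]$, after which the previous argument applied to $\xi_k$ concludes. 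The main obstacle is the reverse implication in Step 2: converting the Fatou inequality $\liminf_k\hat f(\xi_k)\ge\hat f(\xi)$ into honest $L^1$-convergence, thereby upgrading weak convergence on the coarser space into weak convergence on the finer one. The integrable majorant $\hat f(\xi)$ supplied by the assumption $P\in\mathcal P_{\hat f}(\mathcal P(Y))$ is exactly what makes the dominated convergence step go through.
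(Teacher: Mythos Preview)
Your argument is correct and follows a genuinely different route from the paper. For the hard direction of the topological coincidence in (a), the paper does not use Skorokhod: instead it observes that the intensities $I(P^k)$ converge to $I(P)$ in $\mathcal P_f(Y)$, invokes a separate lemma stating that $\{P^k\}$ is relatively compact in $\mathcal P_{\hat f}(\mathcal P_f(Y))$ whenever $\{I(P^k)\}$ is relatively compact in $\mathcal P_f(Y)$, and then identifies every accumulation point with $P$ via the easy direction and the Hausdorff property. Part (b) is handled the same way, replacing $I$ by the extended intensity $\hat I$ and appealing to the product-space analogue of the compactness lemma. Your Skorokhod-plus-$L^1$ argument is more self-contained, since it bypasses these auxiliary compactness results entirely; the paper's approach, on the other hand, reuses structural machinery (the intensity maps and their compactness characterisations) that is developed elsewhere in the appendix and applied repeatedly. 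Two minor remarks: your justification that the two Borel $\sigma$-algebras on $\mathcal P_f(Y)$ coincide is a bit informal---a cleaner route is to note that the identity $\mathcal P_f(Y)\hookrightarrow\mathcal P(Y)$ is a continuous injection between Polish spaces, hence a Borel isomorphism onto its image by the Lusin--Souslin theorem; and you are in fact slightly more explicit than the paper on the topology equality in \eqref{lem:equal topologies Pf2}, where the paper records only the set equality via the two-sided bound on $\overline{\mathcal W}_f(p,\delta_{y_0})$ and leaves the coincidence of topologies implicit.
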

\begin{rk}
	The equalities \eqref{lem:equal topologies Pf}, \eqref{lem:equal topologies Pf2} and \eqref{lem:equel topologies Pf+g} are to be understood up to an identification:
	For two measurable sets $Z'\subset Z$, we say a probability measure $p\in\mathcal P(Z)$ also belongs to $\mathcal P(Z')$ if $p(Z')=1$.
	This is achieved by identifying $p\in\mathcal P(Z)$ with $p'\in\mathcal P(Z')$ where for any measurable subset $A\subset Z'$, $p'(A)=p(A\cap Z')$. 
\end{rk}
\begin{proof} Let us prove \ref{it:equal topologies Pf}. The inclusion $\mathcal P_{\hat f}(\mathcal P(Y))\supset\mathcal P_{\hat f}(\mathcal P_f(Y))$ is straightforward. Conversely, let $P\in\mathcal P_{\hat f}(\mathcal P(Y))$. Then by definition,
	\[
	P(\hat f)=\int_{\mathcal P(Y)}p(f)\,P(dp)<+\infty,
	\]
	which can only hold if $p(f)$ is $P(dp)$-almost everywhere finite, or equivalently $P(\mathcal P_f(Y))=1$, hence $\mathcal P_{\hat f}(\mathcal P(Y))\subset\mathcal P_{\hat f}(\mathcal P_f(Y))$ and therefore we have equality. To see that the two topologies match, let us show that
	\[
	P^k\underset{k\to+\infty}{\longrightarrow}P\ \text{in }\mathcal P_{\hat f}(\mathcal P_f(Y))\iff P^k\underset{k\to+\infty}{\longrightarrow}P\ \text{in }\mathcal P_{\hat f}(\mathcal P(Y)).
	\]
	
	Since the topology on $\mathcal P_f(Y)$ is finer than the weak topology on $\mathcal P(Y)$, we have $\mathcal C(\mathcal P(Y))\subset\mathcal C(\mathcal P_f(Y))$, so the direct implication is trivial. Conversely, suppose that $P^k$ converges in $\mathcal P_{\hat f}(\mathcal P(Y))$ to $P$ as $k$ goes to $+\infty$. Let $h\in\mathcal C(Y)$ be bounded. Then $\hat h\in\mathcal C(\mathcal P(Y))$ is bounded, and $I(P^k)(h)=P^k(\hat h)$ converges to $P(\hat h)=I(P)(h)$ as $k$ goes to $+\infty$. Moreover $I(P^k)(f)=P^k(\hat f)$ converges to $P(\hat f)=I(P)(f)$. This shows that $(I(P^k))_{k\in\N}$ converges in $\mathcal P_f(Y)$ to $I(P)$. Therefore $\{I(P^k)\colon k\in\N\}$ is relatively compact in $\mathcal P_f(Y)$. We deduce by Lemma \ref{lem:rel comp intensity} below that $\{P^k\colon k\in\N\}$ is relatively compact in $\mathcal P_{\hat f}(\mathcal P_f(Y))$. Let $Q$ be an accumulation point of $(P^k)_{k\in\N}$ in $\mathcal P_{\hat f}(\mathcal P_f(Y))$. In particular $Q$ is by the direct implication shown above an accumulation point of $(P^k)_{k\in\N}$ in $\mathcal P_{\hat f}(\mathcal P(Y))$, hence $Q=P$ by uniqueness of the limit since the topology is metrisable and therefore Hausdorff.
	
	Let us now prove the second part of \ref{it:equal topologies Pf}. We endow $\mathcal P_f(Y)$ with the metric $\overline{\mathcal W}_f$. To see that the sets $\mathcal P_{\hat f}(\mathcal P_f(Y))$ and $\mathcal P^1(\mathcal P_f(Y))$ are the
	same, we find
	\[ P(\hat f) < + \infty \iff \int_{\mathcal P(Y)}p(f)\,P(dp)<+\infty \iff \int_{\mathcal P(Y)} \overline{\mathcal W}_f(p,\delta_{y_0}) \, P(dp) < +\infty, \]
	which is an easy consequence of
	\[
	\forall p\in\mathcal P_f(Y),\quad  p(f) - f(y_0) \leq \overline{\mathcal W}_f(p,\delta_{y_0}) \leq p(f) + f(y_0).   \]
	
	Let us now prove \ref{it:equal topologies Pf+g}. We derive the equality $\mathcal P_{f\oplus\hat g}(X\times\mathcal P(Y))=\mathcal P_{f\oplus\hat g}(X\times\mathcal P_g(Y))$ as in \ref{it:equal topologies Pf} since
	\[
	P(f\oplus\hat g)=\int_{X\times\mathcal P(Y)}(f(x)+p(g))\,P(dx,dp)<+\infty,
	\]
	which can only hold if the second marginal of $P$ is concentrated on $\mathcal P_g(Y)$. To see that the topologies are equal, the only nontrivial part is, as in \ref{it:equal topologies Pf}, to show that if $(P^k)_{k\in\N}$ converges in $\mathcal P_{f\oplus\hat g}(X\times\mathcal P(Y))$, then $\{P^k\colon k\in\N\}$ is relatively compact in $\mathcal P_{f\oplus\hat g}(X\times\mathcal P_g(Y))$. Let then $(P^k)_{k\in\N}$ converge in $\mathcal P_{f\oplus\hat g}(X\times\mathcal P(Y))$ to some $P$. Recall moreover the definition of the extended intensity $\hat I$ given by \eqref{defExtendedIntensity}. \textcolor{black}{Let $h:X\times Y\to\R$ be continuous and bounded. Then the map $H:X\times\mathcal P(Y)\ni(x,p)\mapsto\int_Yh(x,y)\,p(dy)=\int_{X\times Y}h(z,y)\delta_x(dz)p(dy)$ is continuous and bounded. We deduce that $\hat I(P^k)(h)=P^k(H)$ converges to $P(H)=\hat I(P)(h)$ as $k$ goes to $+\infty$. Hence $(\hat I(P^k))_{k\in\N}$ converges weakly to $\hat I(P)$.} 
        Then by continuity of the projections the first marginal $\mu^k$, resp. the second marginal $\nu^k$ of $\hat I(P^k)$ converges weakly to the first marginal $\mu$, resp. the second marginal $\nu$ of $\hat I(P)$. Since the maps $f\oplus\hat0:(x,p)\mapsto f(x)$ and $0\oplus\hat g:(x,p)\mapsto\hat g(p)$ belong to $\mathcal C(X\times\mathcal P(Y))$ and are dominated by $f\oplus\hat g$, we also have that
	\[
	\mu^k(f)=P^k(f\oplus\hat0)\underset{k\to+\infty}{\longrightarrow}P(f\oplus\hat0)=\mu(f)\quad\text{and}\quad\nu^k(g)=P^k(0\oplus g)\underset{k\to+\infty}{\longrightarrow}P(0\oplus g)=\nu(g),
	\]
	which shows that $(\mu^k,\nu^k)_{k\in\N}$ converges in $\mathcal P_f(X)\times\mathcal P_g(Y)$ to $(\mu,\nu)$. Therefore $(\hat I(P^k))_{k\in\N}$ is tight in $\mathcal P(X\times Y)$ and both projections $\{\mu^k\colon k\in\N\}$ and $\{\nu^k\colon k\in\N\}$ are relatively compact respectively in $\mathcal P_f(X)$ and $\mathcal P_{\textcolor{black}{g}}(Y)$, so by Lemma \ref{lem:rel comp product space} below $\{P^k\colon k\in\N\}$ is relatively compact in $\mathcal P_{f\oplus\hat g}(X\times\mathcal P_g(Y))$, which proves the claim.
\end{proof}

\begin{lemma} \label{lem:rel comp intensity}
	A set $\mathcal A \subset \mathcal P_{\hat f}(\mathcal P_f(Y))$ is relatively compact if and only if the set of its intensities $I(\mathcal A) \subset \mathcal P_f(Y)$ is relatively compact.
\end{lemma}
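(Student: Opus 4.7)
The plan is to prove both implications separately, with the forward direction being routine and the converse requiring a de La Vallée Poussin-type argument applied to both $I(\mathcal A)$ and $\mathcal A$.

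For the easy direction, I would verify that the intensity map $I\colon\mathcal P_{\hat f}(\mathcal P_f(Y))\to\mathcal P_f(Y)$ is continuous: for any $\phi\in\Phi_f(Y)$, the lift $\hat\phi\colon p\mapsto p(\phi)$ is continuous on $\mathcal P_f(Y)$ by the very definition of its topology and satisfies $|\hat\phi|\le\alpha\hat f$, so $\hat\phi\in\Phi_{\hat f}(\mathcal P_f(Y))$. Hence if $P^k\to P$ in $\mathcal P_{\hat f}(\mathcal P_f(Y))$, then $I(P^k)(\phi)=P^k(\hat\phi)\to P(\hat\phi)=I(P)(\phi)$, which is exactly convergence in $\mathcal P_f(Y)$. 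Continuity transfers relative compactness.

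For the converse, I would exploit the identification $\mathcal P_{\hat f}(\mathcal P_f(Y))=\mathcal P^1(\mathcal P_f(Y))$ (Lemma \ref{lem:equivalent topologies}\ref{it:equal topologies Pf}) and reduce the problem to showing that $\mathcal A$ is tight in $\mathcal P(\mathcal P_f(Y))$ and uniformly $\hat f$-integrable. The relative compactness of $I(\mathcal A)$ in $\mathcal P_f(Y)$, via the de La Vallée Poussin theorem, furnishes $h\in\mathcal F_f^+(Y)$ with $M:=\sup_{P\in\mathcal A}I(P)(h)<+\infty$. Tightness is then immediate: by Lemma \ref{lem: Wfball Wrcompact} the sublevel set $L_N:=\{p\in\mathcal P_f(Y):p(h)\le N\}$ is compact in $\mathcal P_f(Y)$, and Markov's inequality gives $P(L_N^c)\le I(P)(h)/N\le M/N$ uniformly over $\mathcal A$.

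For uniform $\hat f$-integrability the natural candidate is $\hat h(p)=p(h)$, for which $P(\hat h)=I(P)(h)\le M$ uniformly. The main obstacle, which I expect to be the most delicate step, is to upgrade the asymptotic $h/f\to\infty$ at the level of points into the analogous property $\hat h/\hat f\to\infty$ at the level of measures; this is not automatic, since a measure $p$ can make $p(f)$ large while concentrating its mass where $f$ is moderate. Writing $h=g\circ f$ with $g(t)/t\to\infty$ and fixing $\eta>0$, choose $T$ with $g(t)\ge 2\eta t$ for $t\ge T$; for any $p\in\mathcal P_f(Y)$ with $p(f)\ge 2T$, splitting $p(f)=p(f\1_{\{f\le T\}})+p(f\1_{\{f>T\}})$ shows that the first term is at most $T\le p(f)/2$, so $p(h)\ge p(g(f)\1_{\{f>T\}})\ge 2\eta\, p(f\1_{\{f>T\}})\ge\eta\, p(f)$. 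This gives $P(\hat f\1_{\{\hat f>2T\}})\le M/\eta$ uniformly on $\mathcal A$, which is uniform $\hat f$-integrability. Combined with the tightness established above, the standard criterion for relative compactness in $\mathcal P^1$ of a Polish metric space concludes the proof.
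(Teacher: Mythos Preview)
Your forward direction is fine and matches the paper's one-line appeal to continuity of $I$.

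For the converse there is a real gap in the tightness step. You invoke Lemma~\ref{lem: Wfball Wrcompact} to conclude that $L_N=\{p\in\mathcal P_f(Y):p(h)\le N\}$ is compact in $\mathcal P_f(Y)$, but that lemma is stated only for $Y=\R$ with a gauge $g$ that tends to $+\infty$ at infinity; its proof uses precisely this growth to deduce tightness in $\mathcal P(\R)$ from the moment bound. For a general Polish $Y$ the set $L_N$ need not even be tight: take $Y$ non-compact and $f\equiv 1$, so that any $h=g\circ f\in\mathcal F_f^+(Y)$ is constant and $L_N=\mathcal P(Y)$ for large $N$. Thus your Markov estimate $P(L_N^c)\le M/N$ gives no information. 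What your de~La~Vall\'ee~Poussin function $h$ does provide is the \emph{uniform $f$-integrability} half of Lemma~\ref{lem:rel comp characterization} for $L_N$; the missing half is tightness in $\mathcal P(Y)$, and for that you must also exploit the tightness of $I(\mathcal A)$ itself. A standard repair: pick compacts $K_j\subset Y$ with $\sup_{P\in\mathcal A}I(P)(K_j^c)<\varepsilon\,4^{-j}$ and intersect $L_N$ with $\{p:p(K_j^c)\le 2^{-j}\ \forall j\}$; this smaller set \emph{is} compact in $\mathcal P_f(Y)$ by Lemma~\ref{lem:rel comp characterization}, and Markov gives the desired uniform bound on its $P$-complement. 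Your uniform $\hat f$-integrability argument via $\hat h/\hat f\to\infty$ is correct and nicely worked out.

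A minor structural point: you route through the identification $\mathcal P_{\hat f}(\mathcal P_f(Y))=\mathcal P^1(\mathcal P_f(Y))$ from Lemma~\ref{lem:equivalent topologies}\ref{it:equal topologies Pf}, but in the paper the topological part of that identification is proved \emph{using} Lemma~\ref{lem:rel comp intensity}. You can avoid the circularity by applying Lemma~\ref{lem:rel comp characterization} directly to $\mathcal P_{\hat f}(\mathcal P_f(Y))$, which gives the same two-condition criterion you are verifying.
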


\begin{proof}
	The first implication follows as in \cite[Lemma 2.4]{BaBePa18} by continuity of $I$, c.f.\ Lemma \ref{lem: continuity I} below.
	The reverse implication can be shown by pursuing the same idea as in \cite[Lemma 2.4]{BaBePa18} with slight modification: instead of considering the map $y \mapsto d_Y(y,y')^t$ we use $y \mapsto f(y)$.
\end{proof}

\begin{lemma} \label{lem:rel comp characterization}
	A set $\mathcal A \subset \mathcal P_f(Y)$ is relatively compact if and only if it is tight and
	\[	\forall \varepsilon > 0,\quad\exists R > 0,\quad\sup_{\mu \in \mathcal A} \int_{\{y\in Y\colon f(y) \geq R\}} f(y) \, \mu(dy) < \varepsilon.	\]
\end{lemma}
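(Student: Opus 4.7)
My plan is to reduce everything to Prokhorov's theorem for bounded positive measures via the continuous embedding $\iota \colon \mathcal P_f(Y) \to \mathcal M_+(Y)$, $\iota(p)(dy) = f(y)\,p(dy)$, from Remark \ref{rem:Pf}\ref{it:rem Pf2}. Since $\mathcal P_f(Y)$ carries the initial topology under $\iota$ and the image $\iota(\mathcal P_f(Y)) = \{m \in \mathcal M_+(Y) \colon m(1/f) = 1\}$ is closed in $\mathcal M_+(Y)$, relative compactness of $\mathcal A \subset \mathcal P_f(Y)$ is equivalent to relative compactness of $\iota(\mathcal A)$ in $\mathcal M_+(Y)$ for the weak topology, which on the Polish space $Y$ is in turn characterised by tightness and uniform boundedness in total variation.

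For the only if direction, starting from relative compactness of $\iota(\mathcal A)$, I would use tightness of $\iota(\mathcal A)$ to produce, for each $\varepsilon > 0$, a compact $K \subseteq Y$ with $\sup_{\mu \in \mathcal A} \int_{Y \setminus K} f\,d\mu < \varepsilon$. Then, with $R := 1 + \sup_{y \in K} f(y)$ (finite by continuity of $f$ on $K$), the inclusion $\{y \colon f(y) \geq R\} \subseteq Y \setminus K$ yields the uniform $f$-integrability in the statement. Tightness of $\mathcal A$ itself is free: $f \geq 1$ gives $\mu(Y \setminus K) \leq \int_{Y \setminus K} f\,d\mu < \varepsilon$ uniformly in $\mu \in \mathcal A$.

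For the if direction, I would first apply the hypothesis with $\varepsilon = 1$ to find $R_0$ with $\sup_{\mu \in \mathcal A} \mu(f) \leq R_0 + 1$, so that $\iota(\mathcal A)$ is uniformly bounded in mass. Next, to establish tightness of $\iota(\mathcal A)$, I would fix $\varepsilon > 0$, choose $R$ so that $\sup_{\mu \in \mathcal A} \int_{\{f \geq R\}} f\,d\mu < \varepsilon/2$, and use tightness of $\mathcal A$ to pick a compact $K$ with $\sup_{\mu \in \mathcal A} \mu(Y \setminus K) < \varepsilon/(2R)$. Splitting $Y \setminus K$ along $\{f < R\}$ and $\{f \geq R\}$ then gives
\[
\int_{Y \setminus K} f\,d\mu \leq R\,\mu(Y \setminus K) + \int_{\{f \geq R\}} f\,d\mu < \varepsilon,
\]
uniformly in $\mu \in \mathcal A$. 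Prokhorov then delivers relative compactness of $\iota(\mathcal A)$ in $\mathcal M_+(Y)$, and closedness of $\iota(\mathcal P_f(Y))$ transports it back to relative compactness of $\mathcal A$ in $\mathcal P_f(Y)$.

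No substantial obstacle is expected: the argument is essentially a careful translation between tightness of the $f$-weighted measures $\iota(\mu)$ on $Y$ and the uniform $f$-integrability condition on $\mathcal A$. The only thing to keep straight is the interplay between the two truncation levels (the compact set $K$ on the one hand, and the threshold $R$ on the other) in each direction.
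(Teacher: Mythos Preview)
Your argument is correct. The reduction via the embedding $\iota\colon\mathcal P_f(Y)\to\mathcal M_+(Y)$ from Remark~\ref{rem:Pf}\ref{it:rem Pf2} works exactly as you outline: since $\iota$ is a homeomorphism onto the closed set $\{m\in\mathcal M_+(Y):m(1/f)=1\}$, relative compactness of $\mathcal A$ in $\mathcal P_f(Y)$ is equivalent to relative compactness of $\iota(\mathcal A)$ in $\mathcal M_+(Y)$, and your two-sided translation between tightness of the $f$-weighted measures and the uniform $f$-integrability condition is accurate.

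The paper does not spell out a proof but simply refers to \cite[Lemma~2.5]{BaBePa18}, replacing $y\mapsto d_Y(y,y_0)^r$ by $f$. That argument proceeds sequentially: tightness yields weak subsequential limits, and the uniform integrability condition upgrades weak convergence to convergence in $\mathcal P_f(Y)$ (and conversely, failure of uniform integrability contradicts convergence of $f$-moments along a convergent subsequence). Your route is a cleaner repackaging of the same content: rather than handling the moment condition by hand, you encode it as tightness of $\iota(\mathcal A)$ and invoke Prokhorov once on $\mathcal M_+(Y)$. This has the virtue of being self-contained and of exploiting precisely the structure the paper already set up in Remark~\ref{rem:Pf}; the referenced argument is slightly more elementary in that it does not appeal to Prokhorov for non-probability measures.
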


\begin{proof}
	The proof of this lemma runs along the lines of \cite[Lemma 2.5]{BaBePa18} when replacing $y \mapsto d_Y(y,y')^t$ by $y \mapsto f(y)$.
\end{proof}

\begin{lemma}\label{lem: Wfball Wrcompact} 
        Let $g \colon \R \to [1,+\infty)$ be continuous and \textcolor{black}{such that $\lim_{|x|\to\infty}g(x)=+\infty$.} Then for all $f$ in the set ${\mathcal F}_g^+(\R)$ defined in \eqref{defFg+}, the set $B_R:=\{p\in\mathcal P(\R)\colon p(f)\le R\}$ is a compact subset of $\mathcal P_g(\R)$.
\end{lemma}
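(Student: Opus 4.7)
My plan is to invoke the characterisation of relative compactness in $\mathcal P_g(\R)$ from Lemma \ref{lem:rel comp characterization} and then close the argument by showing $B_R$ is closed in $\mathcal P_g(\R)$. Writing $f = h \circ g$ with $h(t)/t \to +\infty$, I will use two facts about $f$: first, $f(y) \to +\infty$ as $|y| \to +\infty$ (since $g(y) \to +\infty$ and $h(t) \to +\infty$, as $h(t)/t \to +\infty$ with $h \ge 1$ forces $h(t) \to +\infty$), and second, $f$ dominates $g$ super-linearly in the sense that for any $\alpha > 0$ there exists $T$ with $g(y) \le f(y)/\alpha$ whenever $g(y) \ge T$.

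For tightness, fix $\varepsilon > 0$ and let $M = R/\varepsilon$. Then $K_\varepsilon := \{y \in \R : f(y) \le M\}$ is closed (continuity of $f$) and bounded (by the first fact above), hence compact, and Markov's inequality gives $p(K_\varepsilon^c) \le p(f)/M \le \varepsilon$ uniformly for $p \in B_R$. For the uniform integrability condition appearing in Lemma \ref{lem:rel comp characterization}, fix $\varepsilon > 0$, choose $\alpha = R/\varepsilon$, let $T$ be as in the super-linear growth property, and take $R' = T$. For $p \in B_R$,
\[
\int_{\{g \ge R'\}} g(y) \, p(dy) \le \int_{\{g \ge R'\}} \frac{f(y)}{\alpha} \, p(dy) \le \frac{p(f)}{\alpha} \le \varepsilon.
\]
By Lemma \ref{lem:rel comp characterization}, $B_R$ is relatively compact in $\mathcal P_g(\R)$.

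It remains to show $B_R$ is closed in $\mathcal P_g(\R)$. Suppose $p^k \in B_R$ converges to $p$ in $\mathcal P_g(\R)$. For each $M > 0$ the truncation $\min(f, M)$ is continuous and bounded by $M \le M \cdot g$ (since $g \ge 1$), so $\min(f, M) \in \Phi_g(\R)$ and therefore $p^k(\min(f, M)) \to p(\min(f, M))$. Since $\min(f, M) \le f$, the left-hand side is bounded by $R$, so $p(\min(f, M)) \le R$. Letting $M \to +\infty$ and applying monotone convergence yields $p(f) \le R$, hence $p \in B_R$.

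The main potential obstacle is the closedness step, because $f$ itself need not lie in $\Phi_g(\R)$ (it can grow much faster than $g$), so one cannot test convergence directly against $f$; the truncation plus monotone convergence trick above handles this cleanly.
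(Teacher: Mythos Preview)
Your proof is correct but takes a different route from the paper's. The paper argues sequentially: it extracts via Prokhorov a weakly convergent subsequence $p_n \to p$, checks $p(f) \le R$ by lower semicontinuity of $p\mapsto p(f)$, and then upgrades weak convergence to convergence in $\mathcal P_g(\R)$ by passing to Skorokhod representations $Z_n \to Z$ a.s.\ and invoking de~la~Vall\'ee~Poussin (the uniform bound $\E[h(g(Z_n))] = \E[f(Z_n)] \le R$ gives uniform integrability of $(g(Z_n))_n$, hence $p_n(g) \to p(g)$). You instead invoke the abstract relative-compactness criterion of Lemma~\ref{lem:rel comp characterization} directly and then verify closedness separately. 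Your approach is more modular and avoids the Skorokhod representation; the paper's is more self-contained in that it does not lean on Lemma~\ref{lem:rel comp characterization}. The underlying analytic content---superlinear domination of $g$ by $f$ yielding uniform $g$-integrability---is the same in both arguments.

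A minor simplification of your closedness step: since $\min(f,M)$ is continuous and bounded, weak convergence alone (which is implied by $\mathcal P_g$-convergence) already gives $p^k(\min(f,M)) \to p(\min(f,M))$; you do not need to invoke $\Phi_g(\R)$. Equivalently, Portmanteau yields $p(f) \le \liminf_k p^k(f) \le R$ in one line, which is exactly what the paper does for that part.
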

\begin{proof} Let $R\ge0$, $(p_n)_{n\in\N}$ be a sequence in $B_R^\N$ and $\epsilon > 0$.
	There exists $r>0$ such that for all $x\in\R$, $|x| \ge r$ implies $f(x)\ge\frac R\varepsilon$. Let $K=\{x\in\R\colon\vert x\vert\le r\}$. 
	For all $n\in\N$, we have $R\ge p_n(f)\ge p_n(\R\backslash K)\frac R\varepsilon$,
	hence $p_n(\R\backslash K)\le\varepsilon$. So $(p_n)_{n\in\N}$ is tight, and by 
	Prokhorov's theorem there exists a subsequence, still denoted $(p_n)_{n\in\N}$ for notational
	simplicity, which converges weakly to $p\in\mathcal P(\R)$. Since $f$ is continuous and
	nonnegative, we have
	\[
	p(f)\le\liminf_{n\to+\infty}p_n(f)\le R,
	\]
	so $p(f)\in B_R$. It remains to show that this convergence also holds in $\mathcal P_g(\R)$. By Skorokhod's representation theorem, there exists for all $n\in\N$ a random variable $Z_n\sim p_n$, such that $(Z_n)_{n \in \N}$ converges almost surely to a random variable $Z \sim p$. For all $n\in\N$ we have
	\[
	p_n(g) = \E[g(Z_n)] \leq \E[f(Z_n)] = p_n(f)\le R,
	\]
	so by the de La Vallée Poussin theorem, $(g(Z_n))_{n\in\N}$ is uniformly integrable.
	We deduce that
	\[
	\lim_{n \to +\infty} p_n(g) = p(g)
	\]
	and $(p_n)_{n \in \N}$ converges in $\mathcal P_g(\R)$ to $p$,
	so $B_R$ is compact.
\end{proof}

For a probability measure $\pi\in\mathcal P(X\times Y)$, we denote by $\proj_X(\pi)$ and $\proj_Y(\pi)$ its $X$-marginal and $Y$-marginal, respectively. Recall moreover the definition of the extended intensity $\hat I$ given by \eqref{defExtendedIntensity}.
\begin{lemma} \label{lem:rel comp product space}
	Let $f \colon X \to [1,+\infty)$ and $g \colon Y \to [1,+\infty)$ be continuous.
	The following are equivalent:
	\begin{enumerate}[label =(\alph*)]
		\item A set $\Pi \subset \mathcal P(X \times Y)$ is tight and both projections,
		$\proj_X(\Pi) \subset \mathcal P_f(X)$ and $\proj_Y(\Pi)\subset \mathcal P_g(Y)$,
		are relatively compact.
		\item $J(\Pi)$ as a subset of $\mathcal P_{f \oplus \hat g}(X \times \mathcal P_g(Y))$
		is relatively compact.
	\end{enumerate}
	
	Conversely, the following are equivalent:
	\begin{enumerate}[label =(\alph*')]
		\item $\Lambda \subset \mathcal P_{f \oplus \hat g}(X \times \mathcal P_g(Y))$ is relatively
		compact.
		\item $\hat I(\Lambda) \subset \mathcal P(X \times Y)$ is tight, and both projections, 
		$\proj_X(\hat I(\Lambda)) \subset \mathcal P_f(X)$ and $\proj_Y(\hat I(\Lambda)) \subset 
		\mathcal P_g(Y)$, are relatively compact.
	\end{enumerate}
\end{lemma}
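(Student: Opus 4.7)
The plan is to first observe that the two equivalences can be unified: since $\hat I \circ J$ is the identity on $\mathcal P(X\times Y)$, for any $\Pi$ the set $\Lambda := J(\Pi)$ satisfies $\hat I(\Lambda) = \Pi$, so (a) iff (b) is the specialization of (a') iff (b') to such $\Lambda$. It therefore suffices to prove the second equivalence.

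For (a') $\Rightarrow$ (b'), I would exploit continuity of the relevant projections. For any $h\in\Phi_f(X)$, the function $(x,p)\mapsto h(x)$ belongs to $\Phi_{f\oplus\hat g}(X\times\mathcal P_g(Y))$, which shows that $\proj_X:\mathcal P_{f\oplus\hat g}(X\times\mathcal P_g(Y))\to\mathcal P_f(X)$ is continuous; similarly $\proj_{\mathcal P_g(Y)}:\mathcal P_{f\oplus\hat g}(X\times\mathcal P_g(Y))\to\mathcal P_{\hat g}(\mathcal P_g(Y))$ is continuous. The identities $\proj_X\hat I(P)=\proj_X(P)$ and $\proj_Y\hat I(P)=I(\proj_{\mathcal P_g(Y)}(P))$ then yield that $\proj_X\hat I(\Lambda)$ is relatively compact in $\mathcal P_f(X)$, and Lemma \ref{lem:rel comp intensity} applied to the relatively compact family $\proj_{\mathcal P_g(Y)}(\Lambda)\subset\mathcal P_{\hat g}(\mathcal P_g(Y))$ yields that its image under $I$, namely $\proj_Y\hat I(\Lambda)$, is relatively compact in $\mathcal P_g(Y)$. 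Tightness of $\hat I(\Lambda)$ in $\mathcal P(X\times Y)$ is immediate from the tightness of both marginals.

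For (b') $\Rightarrow$ (a'), I would check the two conditions of Lemma \ref{lem:rel comp characterization} applied to $\Lambda$ with weight $f\oplus\hat g$. For tightness, the $X$-marginals $\proj_X(\Lambda)=\proj_X\hat I(\Lambda)$ are tight by assumption, and for the $\mathcal P_g(Y)$-marginals $\{Q_P:P\in\Lambda\}$ one has $I(Q_P)=\proj_Y\hat I(P)$, so $I(\{Q_P\})=\proj_Y\hat I(\Lambda)$ is relatively compact in $\mathcal P_g(Y)$, and Lemma \ref{lem:rel comp intensity} lifts this to relative compactness---hence tightness---of $\{Q_P\}$ in $\mathcal P_{\hat g}(\mathcal P_g(Y))$. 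For uniform integrability of $f\oplus\hat g$ under $\Lambda$, I would use on the set $\{f(x)+\hat g(p)\ge R\}$ the pointwise bound
\[
f(x)+\hat g(p)\le 2\max(f(x),\hat g(p))\le 2f(x)\1_{\{f(x)\ge R/2\}}+2\hat g(p)\1_{\{\hat g(p)\ge R/2\}},
\]
and then apply Lemma \ref{lem:rel comp characterization} separately to the relatively compact families $\proj_X(\Lambda)\subset\mathcal P_f(X)$ and $\{Q_P\}\subset\mathcal P_{\hat g}(\mathcal P_g(Y))$ to drive each of the two resulting summands uniformly to zero as $R\to+\infty$.

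The main obstacle is purely a matter of bookkeeping: keeping distinct the various Polish-space structures ($\mathcal P_f(X)$, $\mathcal P_g(Y)$, $\mathcal P_{\hat g}(\mathcal P_g(Y))$, $\mathcal P_{f\oplus\hat g}(X\times\mathcal P_g(Y))$) and the maps $J$, $\hat I$, $I$ and the various projections between them, and invoking Lemma \ref{lem:rel comp intensity} and Lemma \ref{lem:rel comp characterization} at the correct layer of this hierarchy. No tool beyond these two lemmas and the continuity of the projections is required.
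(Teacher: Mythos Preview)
Your proposal is correct and follows essentially the same route the paper indicates: the paper's own proof merely refers to \cite[Lemma 2.6]{BaBePa18} and names Lemma \ref{lem:rel comp intensity}, Lemma \ref{lem:rel comp characterization} and continuity of $\hat I$ as the needed ingredients, and your argument is a faithful unpacking of exactly that strategy. Your reduction of the first equivalence to the second via $\hat I\circ J=\id$ is a clean economy the paper does not make explicit.
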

\begin{proof}
	For this lemma works the same proof as in \cite[Lemma 2.6]{BaBePa18} when using Lemma \ref{lem:rel comp intensity}, the characterisation of relative compactness given in Lemma \ref{lem:rel comp characterization} and continuity of $\hat I$, see Lemma \ref{lem: continuity I}.
\end{proof}

\begin{lemma}\label{lem: continuity I}
	Let $f \colon X \to [1,+\infty)$ and $g \colon Y \to [1, +\infty)$ be continuous.
	The maps
	\begin{align}
	\label{def:I Pf}
	&I \colon \mathcal P_{\hat g}(\mathcal P(Y)) \to \mathcal P_g(Y),\quad
	I(P)(dy) := \int_{\mathcal P(Y)} p(dy)\,P(dp),
	\\  \label{def:hatI Pf}
	&\hat I \colon \mathcal P_{f \oplus \hat g}(X \times \mathcal P(Y)) \to P_{f \oplus g}(X \times Y),\quad \hat I(P)(dx,dy) := \int_{\mathcal P(Y)}p(dy) \, P(dx,dp),
	\end{align}
	are continuous.
\end{lemma}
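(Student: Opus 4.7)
The plan is to derive both continuity statements by lifting test functions on the target space to test functions on the source space and invoking the $\Phi$-class characterisation \eqref{eq:Pf convergence} of convergence in $\mathcal P_h$-type spaces.

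For the continuity of $I$, I would fix an arbitrary $h\in\Phi_g(Y)$ and consider its lift $\hat h\colon p\mapsto p(h)$ on $\mathcal P_g(Y)$. Continuity of $\hat h$ on $\mathcal P_g(Y)$ is immediate from \eqref{eq:Pf convergence}, while the growth bound $|h|\le\alpha g$ gives $|\hat h(p)|\le\alpha\hat g(p)$, so $\hat h\in\Phi_{\hat g}(\mathcal P_g(Y))$. Using the identification $\mathcal P_{\hat g}(\mathcal P(Y))=\mathcal P_{\hat g}(\mathcal P_g(Y))$ granted by Lemma~\ref{lem:equivalent topologies}\ref{it:equal topologies Pf}, convergence $P^k\to P$ in the source space yields $P^k(\hat h)\to P(\hat h)$, that is $I(P^k)(h)\to I(P)(h)$. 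Since $h$ was arbitrary in $\Phi_g(Y)$, this is precisely convergence of $I(P^k)$ to $I(P)$ in $\mathcal P_g(Y)$.

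For $\hat I$, I would proceed similarly with $h\in\Phi_{f\oplus g}(X\times Y)$ and its lift $H(x,p):=p(h(x,\cdot))$ defined on $X\times\mathcal P_g(Y)$. The growth bound $|H(x,p)|\le\alpha(f(x)+\hat g(p))$ is immediate from $|h|\le\alpha(f\oplus g)$. The key point will be the joint continuity of $H$. To handle it, I would observe that if $(x_n,p_n)\to(x,p)$ in $X\times\mathcal P_g(Y)$, then the product probabilities $\delta_{x_n}\otimes p_n$ converge to $\delta_x\otimes p$ weakly in $\mathcal P(X\times Y)$ by the standard tensor-product argument for weakly convergent sequences (based on tightness of $\{p_n\}$ and joint uniform continuity of bounded continuous test functions on products of a compact neighbourhood of $x$ with tight sets in $Y$), and also satisfy the moment convergence $f(x_n)+p_n(g)\to f(x)+p(g)$. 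Hence they converge to $\delta_x\otimes p$ in $\mathcal P_{f\oplus g}(X\times Y)$, and testing against $h\in\Phi_{f\oplus g}(X\times Y)$ yields $H(x_n,p_n)\to H(x,p)$. Thus $H\in\Phi_{f\oplus\hat g}(X\times\mathcal P_g(Y))$, and via Lemma~\ref{lem:equivalent topologies}\ref{it:equal topologies Pf+g} I conclude $P^k(H)\to P(H)$, i.e.\ $\hat I(P^k)(h)\to\hat I(P)(h)$, establishing convergence of $\hat I(P^k)$ to $\hat I(P)$ in $\mathcal P_{f\oplus g}(X\times Y)$.

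The main obstacle I anticipate is the joint continuity of $H$: one cannot treat the two variables separately because the residual term $\int(h(x_n,\cdot)-h(x,\cdot))\,dp_n$ would not obviously vanish without additional uniform-integrability-type control, and the product-measure reformulation above is what allows the two convergences to be exploited simultaneously.
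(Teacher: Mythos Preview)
Your argument is correct, but it takes a slightly different route from the paper's. The paper works with characterisation~(i) of Definition~\ref{def:Pf}: it tests against \emph{bounded} continuous $h$ on $Y$ (resp.\ $X\times Y$), whose lifts $\hat h$ (resp.\ $H$) are then bounded continuous on $\mathcal P(Y)$ (resp.\ $X\times\mathcal P(Y)$) for the plain weak topology, and treats the single moment function $g$ (resp.\ $f\oplus g$) separately. You instead invoke characterisation~(ii) and test against the full class $\Phi_g$ (resp.\ $\Phi_{f\oplus g}$); since unbounded $h$ lifts to a function continuous only on $\mathcal P_g(Y)$, you must pass through Lemma~\ref{lem:equivalent topologies} to identify $\mathcal P_{\hat g}(\mathcal P(Y))$ with $\mathcal P_{\hat g}(\mathcal P_g(Y))$. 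Your approach is tidier in that a single test-class argument handles everything, but the paper's is more self-contained: Lemma~\ref{lem:equivalent topologies} in turn cites Lemmas~\ref{lem:rel comp intensity} and~\ref{lem:rel comp product space}, whose statements reference the present lemma, so one has to check (as is indeed the case) that only the non-circular implications are used. Your treatment of the joint continuity of $H$ via $\delta_{x_n}\otimes p_n\to\delta_x\otimes p$ in $\mathcal P_{f\oplus g}$ is exactly the argument the paper gives (in the bounded case) and is the right way to handle the obstacle you flag.
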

\begin{proof}
	Let $(P^k)_{k \in \N}$ be a sequence in $\mathcal P_{\hat g}(\mathcal P(Y))$
	with limit point $P$. Let $h:Y\to\R$ be continuous and bounded, then $\hat h:{\mathcal P(Y)}\to\R$ is continuous and bounded.
	Thus,
	\begin{align*}
	\lim\limits_{k\to+\infty} I(P^k)(h)= \lim\limits_{k \to +\infty} P^k(\hat h) = P(\hat h) = I(P)(h),
	\\  \lim\limits_{k\to+\infty} I(P^k)(g)=\lim\limits_{k\to+\infty}P^k(\hat g)=P(\hat g) = I(P)(g),
	\end{align*}
	which shows continuity of $I$.
	
	Next, let $(P^k)_{k \in \N}$ be a sequence in $\mathcal P_{f \oplus \hat g}(X \times \mathcal P(Y))$
	converging to $P$. \textcolor{black}{Let $h:X\times Y\to\R$ be continuous and bounded. Then the map $H:X\times\mathcal P(Y)\ni(x,p)\mapsto\int_Yh(x,y)\,p(dy)=\int_{X\times Y}h(z,y)\delta_x(dz)p(dy)$ is continuous and bounded.} Again, we find
	\begin{align*}
	\lim\limits_{k\to+\infty} \hat I(P^k)(h)= \lim\limits_{k \to +\infty} P^k(H) = P(H) = \hat I(P)(h),
	\\  \lim\limits_{k\to+\infty} \hat I(P^k)(f \oplus g)=\lim\limits_{k\to+\infty}P^k(f \oplus \hat g)=P(f \oplus \hat g) = \hat I(P)(f \oplus g),
	\end{align*}
	whereby we derive continuity of $\hat I$.
\end{proof}

\begin{proposition}\label{infinite-dimensional Jensen inequality} 
	Let $g \colon Y \to [1,+\infty)$ be continuous, $C:\mathcal P_g(Y)\to\R$ be convex, lower
	semicontinuous and lower bounded by a negative multiple of $\hat g$. 
	Then for all $Q\in\mathcal P_{\hat g}(\mathcal P(Y))$ holds
	\begin{equation}\label{eq:convex lsc jensen inequality}
	C\left(I(Q)\right)\le\int_{\mathcal P_g(Y)} C(p)\,Q(dp).
	\end{equation}
	
	If moreover $C$ is strictly convex, then \eqref{eq:convex lsc jensen inequality} is an equality if and only if
	$Q = \delta_{I(Q)}$.
\end{proposition}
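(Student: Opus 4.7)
The plan is to express $C$ as the pointwise supremum of continuous affine functionals on $\mathcal P_g(Y)$, which reduces Jensen's inequality to the trivial affine case where it becomes the very definition of the intensity $I(Q)$. First, by Lemma~\ref{lem:equivalent topologies}~\ref{it:equal topologies Pf}, $Q$ is concentrated on $\mathcal P_g(Y)$, and $I(Q)(g) = Q(\hat g) < +\infty$ ensures $I(Q) \in \mathcal P_g(Y)$; together with the lower bound $C \ge -K\hat g$, this makes $\int C\,dQ$ well-defined in $\R \cup \{+\infty\}$.

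Next, following Remark~\ref{rem:Pf}~\ref{it:rem Pf3}, I would realise $\mathcal P_g(Y)$ as a closed convex subset of the normed space $(\mathcal M_+(Y), \lVert \cdot \rVert_0)$ via the embedding $\iota(p) = g \cdot p$, where $\lVert \cdot \rVert_0$ is the norm dual to the bounded $1$-Lipschitz functions on $Y$. Extending $C$ by $+\infty$ outside $\iota(\mathcal P_g(Y))$ yields a proper convex lower semicontinuous function on $\mathcal M_+(Y)$ with closed convex epigraph, to which Hahn-Banach separation applies. Computing the continuous linear functionals in the norm $\lVert \cdot \rVert_0$ and pulling back through $\iota$, one finds that the resulting continuous affine functionals on $\mathcal P_g(Y)$ are of the form $L_{\alpha,\phi}(p) = \alpha + p(\phi)$ with $\alpha \in \R$ and $\phi \in \Phi_g(Y)$, and that
\[
C(q) = \sup \bigl\{ L_{\alpha,\phi}(q) : \alpha \in \R,\ \phi \in \Phi_g(Y),\ L_{\alpha,\phi} \le C \text{ on } \mathcal P_g(Y) \bigr\}, \quad q \in \mathcal P_g(Y).
\]
For any admissible $L_{\alpha,\phi}$, the affine identity
\[
L_{\alpha,\phi}(I(Q)) = \alpha + I(Q)(\phi) = \int L_{\alpha,\phi}(p)\,Q(dp) \le \int C(p)\,Q(dp)
\]
is immediate, and supremising over admissible $L_{\alpha,\phi}$ at $q = I(Q)$ yields \eqref{eq:convex lsc jensen inequality}.

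For the equality case under strict convexity, I would argue by contradiction. Assume $Q \ne \delta_{I(Q)}$; since $\Phi_g(Y)$ separates points of $\mathcal P_g(Y)$, there exists $\phi \in \Phi_g(Y)$ such that $p \mapsto p(\phi)$ is not $Q$-almost-surely constant. Then both $A := \{p : p(\phi) < I(Q)(\phi)\}$ and $A^c$ have positive $Q$-mass (since $I(Q)(\phi)$ is the $Q$-mean of a non-constant function), and writing $Q = \lambda Q_1 + (1-\lambda) Q_2$ with $Q_1 := Q(\cdot \cap A)/Q(A)$ and $Q_2 := Q(\cdot \cap A^c)/Q(A^c)$, the inequalities $I(Q_1)(\phi) < I(Q)(\phi) < I(Q_2)(\phi)$ force $I(Q_1) \ne I(Q_2)$. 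Applying \eqref{eq:convex lsc jensen inequality} separately to $Q_1$ and $Q_2$ and combining with strict convexity of $C$ at the two distinct points $I(Q_1), I(Q_2)$ gives
\[
C(I(Q)) = C\bigl(\lambda I(Q_1) + (1-\lambda) I(Q_2)\bigr) < \lambda C(I(Q_1)) + (1-\lambda) C(I(Q_2)) \le \int C\,dQ,
\]
contradicting equality. The main obstacle is locating $C$ inside an ambient locally convex space where both closedness of the epigraph and the identification of continuous affine functionals are transparent; once the embedding $\iota$ is in place, the Hahn-Banach step and the strict-convexity dichotomy are routine.
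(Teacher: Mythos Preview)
Your equality case under strict convexity matches the paper's almost verbatim: both split $Q$ along a level set of some $p\mapsto p(\phi)$ with $\phi\in\Phi_g(Y)$, obtain $I(Q_1)\neq I(Q_2)$, and conclude via strict convexity.

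For the inequality \eqref{eq:convex lsc jensen inequality} itself the approaches differ. The paper argues by empirical approximation: for i.i.d.\ $P_1,P_2,\ldots\sim Q$, the law of large numbers gives $\frac1n\sum_{k\le n}\delta_{P_k}\to Q$ in $\mathcal P_{\hat g}(\mathcal P(Y))$, hence $\frac1n\sum_{k\le n}P_k\to I(Q)$ in $\mathcal P_g(Y)$ by continuity of $I$ (Lemma~\ref{lem: continuity I}); then lower semicontinuity plus convexity on \emph{finite} sums give $C(I(Q))\le\liminf_n\frac1n\sum_{k\le n} C(P_k)=Q(C)$. Your Fenchel--Moreau route is the textbook barycenter argument and is equally valid, with one caveat: you identify the continuous affine functionals via the norm $\lVert\cdot\rVert_0$, but $\mathcal M_+(Y)$ is only a cone, and on its linear span $\mathcal M(Y)$ the $\lVert\cdot\rVert_0$-dual is not obviously exhausted by integration against bounded continuous test functions, so your claim that the separating functionals pull back to $L_{\alpha,\phi}$ with $\phi\in\Phi_g(Y)$ is unjustified as written. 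The painless fix is to use the weak topology $\sigma(\mathcal M(Y),C_b(Y))$ instead: $\iota(\mathcal P_g(Y))$ is still closed (Remark~\ref{rem:Pf}~\ref{it:rem Pf2}), the extension of $C$ by $+\infty$ is still lsc since $C$ was already lsc for this coarser topology, and the dual is now $C_b(Y)$ by construction, so the affine minorants are exactly your $L_{\alpha,\phi}$ with $\phi=gh\in\Phi_g(Y)$. With this adjustment your proof is complete. The paper's route trades the locally-convex duality machinery for the law of large numbers and only ever invokes convexity on finite convex combinations; yours is the more structural argument and transfers verbatim to any locally convex embedding once the dual is correctly identified.
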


\begin{proof} Let $Q\in\mathcal P_{\hat g}(\mathcal P(Y))$, $P_n:\Omega\to\mathcal P(Y)$, $n\in\N^*$ be independent random variables identically distributed according to $Q$ and $\mathcal G\subset\Phi_{\hat g}(\mathcal P(Y))$ be a countable family which determines the convergence in $\mathcal P_{\hat g}(\mathcal P(Y))$ (see \cite[Theorem 4.5.(b)]{EtKu09}). By the law of large numbers, almost surely, for all $\psi\in\mathcal G$,
	\begin{equation}\label{lawLargeNumbers2}
	\frac1n\sum_{k=1}^n\psi(P_k)\underset{n\to+\infty}{\longrightarrow}\E[\psi(P_1)]=Q(\psi)\quad\text{and}\quad\frac1n\sum_{k=1}^nC(P_k)\underset{n\to+\infty}{\longrightarrow}\E[C(P_1)]=Q(C).
	\end{equation}
	
	Let $\omega\in\Omega$ be such that \eqref{lawLargeNumbers2} holds when evaluated at $\omega$ and set $p_n=P_n(\omega)$ for $n\in\N^*$. Then $\left(\frac1n\sum_{k=1}^n\delta_{p_k}\right)_{n\in\N}$ converges in $\mathcal P_{\hat g}(\mathcal P(Y))$ to $Q$. By Lemma \ref{lem: continuity I}, $\frac1n\sum_{k=1}^np_k$ converges to $I(Q)$ \textcolor{black}{in $\mathcal P_g(Y)$} as $n\to+\infty$. By lower semicontinuity of $C$ for the first inequality, convexity of $C$ for the second one and \eqref{lawLargeNumbers2} evaluated at $\omega$ for the equality, we get
	\begin{equation}
		C(I(Q))\le\liminf_{n\to+\infty}C\left(\frac1n\sum_{k=1}^np_k\right)\le\liminf_{n\to+\infty}\frac1n\sum_{k=1}^nC(p_k)=Q(C).
	\end{equation}

	If $Q = \delta_{I(Q)}$ we have trivially equality in \eqref{eq:convex lsc jensen inequality}.
	So, assume that $Q$ is not concentrated on a single point, and that $C$ is strictly convex.
	There are $h \in \Phi_g(Y)$ and $b \in \R$ such that $A = \{ p \in \mathcal P_g(Y) \colon p(h) 
	\leq b \}$ satisfies
	\begin{equation}
	\label{eq:jensen separating}
	Q(A) > 0 \text{ and }Q(A^c) > 0.
	\end{equation}
	
	Indeed, pick any points $p_1,p_2 \in \mathcal P_g(Y)$, $p_1 \neq p_2$ in the support of $Q$,
	then the Hahn-Banach separation theorem provides $h \in \Phi_g(Y)$ and $b \in \R$ such that
	$p_1(h) < b < p_2(h)$. As both points lie in the support of $Q$, and $\{p \in \mathcal P_g(Y) \colon p(h) < b\}$ and $\{p \in \mathcal P_g(Y) \colon p(h) > b\}$ are open subsets containing
	$p_1$ and $p_2$, respectively, we obtain \eqref{eq:jensen separating}.
	Write $Q_1(dp) := \1_{A} \frac{Q(dp)}{Q(A)}$ and $Q_2(dp) := \1_{A^c} \frac{Q(dp)}{Q(A^c)}$.
	By the definition of $A$, we have that $I(Q_1)(h)\le b < I(Q_2)(h)$ and especially $I(Q_1) \neq I(Q_2)$.
	By \eqref{eq:convex lsc jensen inequality} we find
	\[
	\int_{\mathcal P_g(y)} C(p) \, Q_1(dp) \geq C(I(Q_1)) \text{ and }\int_{\mathcal P_g(Y)} C(p) \, Q_2(dp) \geq C(I(Q_2)).
	\]
	
	Hence, as $Q = Q(A) Q_1 + (1 - Q(A)) Q_2$ we get
	\begin{align*}
	\int_{\mathcal P_g(Y)} C(p) \, Q(dp) &= \int_{\mathcal P_g(Y)} C(p) Q(A) \, Q_1(dp) + \int_{\mathcal P_g(Y)} C(p) Q(A^c) \, Q_2(dp)
	\\  &\geq Q(A) C(I(Q_1)) + (1 - Q(A)) C(I(Q_2))
	\\  &> C(Q(A)I(Q_1) + (1 - Q(A))I(Q_2)) = C(I(Q)),
	\end{align*}
	where we used $I(Q_1) \neq I(Q_2)$ and strict convexity for the last inequality.
\end{proof}

\subsection{A Pormanteau-like theorem for Carathéodory maps}

Let $(\pi^k)_{k\in\N}$ be a sequence of probability measures defined on $X\times Y$ converging in 
$\mathcal P_{f \oplus g}(X \times Y)$ to $\pi$, and $c:X\times Y\to\R$ be a (lower) Carathéodory map, that is a measurable function which is (lower semi-) continuous in its second argument. 
The goal of the present section is to connect the asymptotic behaviour of $\int_{X\times Y}c(x,y)\,\pi^k(dx,dy)$ and $\int_{X\times Y}c(x,y)\,\pi(dx,dy)$ under relaxed assumption on the cost $c$.
We recall that $(\pi^k)_{k\in\N}$ is said to converge stably to $\pi$ if and only if for every bounded measurable map $g:X\to\R$ and bounded continuous map $h:Y\to\R$
\begin{equation}\label{defStableConvergence}
\int_{X\times Y}g(x)h(y)\,\pi^k(dx,dy)\underset{k\to+\infty}{\longrightarrow}\int_{X\times Y}g(x)h(y)\,\pi(dx,dy).
\end{equation}

We say that a sequence $(\mu^k)_{k \in \N}$ of probability measures on $\mathcal P(X)$ $K$-converges in total variation to $\mu$ if and only if for every subsequence $(\mu^{k_i})_{i \in \N}$ we have
\begin{equation*}
\frac{1}{n} \sum_{i = 1}^n \mu^{k_i} \underset{n\to+\infty}{\longrightarrow}\mu\quad \text{in total variation}.
\end{equation*}

\begin{lemma}\label{lem:convStable}
	Let $\pi, \pi^k \in \mathcal P(X \times Y)$, $k\in\N$ be with respective first marginal $\mu$, $\mu^k$.	All of the following statements are equivalent:
	\begin{enumerate}[label = (\alph*)]
		\item \label{it:stable convergence} $(\pi^k)_{k\in\N}$ converges to $\pi$ stably.
		\item \label{it:strong weak} $(\pi^k)_{k\in\N}$ converges to $\pi$ weakly and $(\mu^k)_{k \in \N}$ converges strongly to $\mu$.
		\item \label{it:KTV weak} $(\pi^k)_{k\in\N}$ converges to $\pi$ weakly and every subsequence of
		$(\mu^k)_{k \in \N}$ has an in total variation $K$-convergent sub-subsequence with limit $\mu$.
	\end{enumerate}
\end{lemma}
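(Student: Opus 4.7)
The plan is to prove $(a) \Leftrightarrow (b)$ directly and $(b) \Leftrightarrow (c)$, treating each of the four implications in turn. The direction $(a) \Rightarrow (b)$ is essentially immediate from the definition \eqref{defStableConvergence}: testing with $g$ bounded continuous (and extending to arbitrary bounded continuous functions on $X \times Y$ by linearity and a Stone--Weierstrass argument) yields weak convergence $\pi^k \to \pi$, while testing with $h \equiv 1$ and $g = \1_A$ for Borel $A \subset X$ yields $\mu^k(A) \to \mu(A)$, i.e.\ strong convergence of the marginals.

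For $(b) \Rightarrow (a)$, I would approximate the bounded measurable $g \colon X \to [-1,1]$ by a continuous function via Lusin's theorem: for $\varepsilon > 0$, pick $g_\varepsilon$ continuous with $|g_\varepsilon| \le 1$ and $\mu(\{g \ne g_\varepsilon\}) < \varepsilon$ (available since $\mu$ is Radon on the Polish space $X$). Then decompose
\[
  \int g h \, d\pi^k - \int g h \, d\pi = \int (g - g_\varepsilon) h \, d\pi^k + \Bigl( \int g_\varepsilon h \, d\pi^k - \int g_\varepsilon h \, d\pi \Bigr) + \int (g_\varepsilon - g) h \, d\pi.
\]
The middle term vanishes by weak convergence of $\pi^k$ against the bounded continuous function $g_\varepsilon h$; the third is bounded by $\|h\|_\infty \varepsilon$; the first by $\|h\|_\infty \mu^k(\{g \ne g_\varepsilon\})$, which is less than $2\|h\|_\infty \varepsilon$ for $k$ large by strong convergence $\mu^k \to \mu$.

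The implication $(c) \Rightarrow (b)$ is a short subsequence argument: weak convergence is already assumed, and if $\mu^k(A) \not\to \mu(A)$ for some Borel $A$, extract a subsequence with $\mu^{k_i}(A) \to \ell \ne \mu(A)$; applying (c) provides a sub-subsequence whose Ces\`aro averages converge to $\mu$ in total variation, hence at $A$ to $\mu(A)$, contradicting the fact that the same Ces\`aro averages at $A$ also converge to $\ell$.

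The main obstacle is $(b) \Rightarrow (c)$: setwise convergence of probability measures does not in general imply total variation convergence, so the Ces\`aro averaging in (c) is essential and one naturally appeals to Koml\'os' theorem. Given any subsequence $(\mu^{k_i})_i$, I would introduce the finite reference measure $\nu := \mu + \sum_{i} 2^{-i} \mu^{k_i}$ together with the Radon--Nikodym densities $f^{k_i} := d\mu^{k_i}/d\nu$ and $f := d\mu/d\nu$ in $L^1(\nu)$. Strong convergence $\mu^{k_i}(A) \to \mu(A)$ for all Borel $A$ amounts precisely to weak convergence $f^{k_i} \to f$ in the duality $(L^1(\nu), L^\infty(\nu))$, and the Dunford--Pettis theorem upgrades this weak relative compactness to uniform integrability of $(f^{k_i})_i$. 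Koml\'os' theorem then yields a sub-subsequence $(f^{k_{i_j}})_j$ whose Ces\`aro averages converge $\nu$-a.e., the weak $L^1$-limit identifying the almost-sure limit as $f$; since uniform integrability is stable under convex combinations, Vitali's theorem upgrades this $\nu$-a.e.\ convergence to $L^1(\nu)$-convergence, equivalent to the desired total variation convergence $\frac{1}{n} \sum_{j=1}^n \mu^{k_{i_j}} \to \mu$.
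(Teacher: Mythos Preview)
Your proof is correct, but it is organized differently from the paper and uses somewhat different tools. The paper argues in a cycle $(a)\Rightarrow(b)\Rightarrow(c)\Rightarrow(a)$, whereas you prove the two equivalences $(a)\Leftrightarrow(b)$ and $(b)\Leftrightarrow(c)$ separately. Your direct proof of $(b)\Rightarrow(a)$ via Lusin's theorem is more elementary than the paper's route, which instead establishes $(c)\Rightarrow(a)$ by a contradiction argument: assuming a bounded Carath\'eodory $c$ along which $\pi^{k_j}(c)\ge\pi(c)+\delta$, passing to a sub-subsequence of marginals that $K$-converges in total variation, and then building an auxiliary sequence $\tilde\pi^n$ with first marginal exactly $\mu$ (via Lebesgue decomposition of the Ces\`aro means $\hat\mu^n$) that is TV-close to $\hat\pi^n$ and hence stably convergent. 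For $(b)\Rightarrow(c)$, the paper works directly with the Lebesgue decomposition $\mu^k=\rho^k\mu+\eta^k$, shows $\eta^k(X)\to 0$ by strong convergence on a common carrier of the singular parts, and applies Koml\'os to $(\rho^k)_k$ in $L^1(\mu)$; you instead pass to a common dominating measure $\nu$, invoke Dunford--Pettis to obtain uniform integrability of the densities, and then Koml\'os plus Vitali. Both arguments are valid; yours trades the Lebesgue decomposition trick for the (slightly heavier) Dunford--Pettis theorem, while your Lusin argument for $(b)\Rightarrow(a)$ is shorter than the paper's Ces\`aro construction. One minor point: in your $(a)\Rightarrow(b)$ step, the passage from products $g(x)h(y)$ to general bounded continuous test functions is better justified by tightness of $(\pi^k)_k$ (from weak convergence of the marginals) together with the fact that products form a measure-determining class, rather than by Stone--Weierstrass, since $X\times Y$ need not be compact.
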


\begin{proof}
	We prove ``\ref{it:stable convergence}$\implies$\ref{it:strong weak}''.
	The definition of stable convergence given by \eqref{defStableConvergence} is in the Polish set-up by \cite[Theorem 8.10.65 (ii)]{Bo92} equivalent to 
	\[\int_{X\times Y} c(x,y) \, \pi^k(dx,dy) \underset{k\to+\infty}{\longrightarrow} \int_{X\times Y} c(x,y) \, \pi(dx,dy)\]
	for all $c\colon X \times Y \to \R$ which are bounded and Carath\'eodory.
	Thus, stable convergence is stronger than weak convergence.
	For all measurable subsets $A\subset X$, we find by setting $g=\1_A$ and $h=1$ in \eqref{defStableConvergence} that
	\[\mu^k(A)\underset{k\to+\infty}{\longrightarrow}\mu(A).\]
	
	Next we show ``\ref{it:strong weak}$\implies$\ref{it:KTV weak}''.
	
	Let $\mu^k(dx) = \rho^k(x) \, \mu(dx) + \eta^k(dx)$ be the Lebesgue decomposition of $\mu^k$ w.r.t.\ $\mu$.
	Since $\eta^k$ is singular to $\mu$ there is $N^k \in \mathcal B(X)$ such that $\eta^k(N^k) = \eta^k(X)$ and $\mu(N^k) = 0$.
	Define $N = \bigcup_{k \in \N} N^k \in \mathcal B(X)$, then $\eta^k(N) = \eta^k(X)$ for all $k\in \N$ and $\mu(N)$ vanishes as a countable union of null sets.
	Thus, $\eta^k(X) = \mu^k(N) \to \mu(N) = 0$ as $k \to +\infty$.
	Since $(\rho^k)_{k \in \N}$ is bounded in $L^1(\mu)$ there is by Koml\'os theorem a $K$-convergent subsequence to some limiting function $\rho \in L^1(\mu)$.
	We have
	\[	\frac{1}{n} \sum_{l = 1}^n \rho^{k_l} \underset{n\to+\infty}{\longrightarrow} \rho,\quad \mu\text{-a.s.}	\]
	
	By \cite[Corollary 4.5.7]{Bo00} the above convergence even holds in $L^1(\mu)$.
	We find for any measurable subset $A\subset X$, \textcolor{black}{
	\[	\int_A \frac{1}{n} \sum_{l = 1}^{n} \rho^{k_l}(x) \, \mu(dx) \underset{n\to+\infty}{\longrightarrow} \int_A \rho(x) \, \mu(dx) .	\]
On the other hand $\int_A \frac{1}{n} \sum_{l = 1}^{n} \rho^{k_l}(x) \, \mu(dx)=\frac{1}{n} \sum_{l = 1}^{n}	\mu^{k_l}(A) $ converges to $\mu(A)$. 
	Hence $\int_A \rho(x) \, \mu(dx)=\mu(A)$} so that $\rho(x) = 1$, $\mu(dx)$-almost surely and
	\[	\TV\left( \frac{1}{n}\sum_{l = 1}^n \mu^{k_l}, \mu \right) = \eta^k(X) + \int_X \left\vert \frac{1}{n} \sum_{l = 1}^n\rho^{k_l}(x) - 1 \right\vert \, \mu(dx)\underset{n\to+\infty}{\longrightarrow}0.\]
	
	Finally we show ``\ref{it:KTV weak}$\implies$\ref{it:stable convergence}''. If $(\pi^k)_{k \in \N}$ does not converge stably to $\pi$, then there is a bounded Carathéodory function
	$c \colon X \times Y \to \R$, such that
	\[  \limsup_{k \to +\infty} \left\vert \int_{X\times Y} c(x,y) \, \pi^k(dx,dy) - \int_{X\times Y} c(x,y) \, \pi(dx,dy) \right\vert > 0.  \]
	
	Hence, w.l.o.g.\ there is a subsequence $(\pi^{k_j})_{j \in \N}$ such that $\pi^{k_j}(c) \geq \pi(c) + \delta$ for some $\delta > 0$. Especially, we have for any sub-subsequence $(\pi^{k_{j_i}})_{i \in \N}$ 
	of $(\pi^{k_j})_{j \in \N}$ that
	\begin{equation}\label{lowerBoundCesaroMeans}
	\frac1n \sum_{i = 1}^n \pi^{k_{j_i}}(c) \geq \pi(c) + \delta,
	\end{equation}
	whereby the Ces\`aro-means of the sub-subsequence are not stably convergent. By assumption there exists a subsequence $(\mu^{k_{j_i}})_{i\in\N}$ of $(\mu^{k_j})_{j\in\N}$ which $K$-converges in total variation to $\mu$. For $n\in\N^*$ define
	\[
	\hat\mu^n=\frac1n\sum_{i=1}^n\mu^{k_{j_i}}\quad\text{and}\quad\hat\pi^n=\frac1n\sum_{i=1}^n\pi^{k_{j_i}}.
	\]
	
	We will show that $(\hat\pi^n)_{n\in\N^*}$ converges stably to $\pi$, which will contradict \eqref{lowerBoundCesaroMeans} and end the proof. Let $\hat\mu^n(dx)=\hat\rho^n(x)\,\mu(dx)+\hat\eta^n(dx)$ be the Lebesgue decomposition of $\hat\mu^n$ w.r.t. $\mu$. Define the auxiliary sequence
	\[
	\tilde \pi^n(dx,dy) = \left((1 \wedge \hat\rho^n(x)) \, \hat\pi^n_x(dy) + (1 - \hat\rho^n(x))^+ \, \pi_x(dy)\right) \, \mu(dx).
	\]
	
	Let $c \colon X \times Y \to \R$ be Carath\'eodory and absolutely bounded by $K$, then
	\begin{align}\begin{split}\label{eq:auxiliary cara close}
	\left|\int_{X\times Y} \right. & c(x,y) \,  \left.\tilde\pi^n(dx,dy) - \int_{X\times Y} c(x,y) \, \hat\pi^n(dx,dy)\right|
	\\  
	&\leq K \left( \int_X \left|\hat\rho^n(x) - 1 \wedge \hat\rho^n(x)\right|\, \mu(dx) + 
	\int_X (1 - \hat\rho^n(x))^+ \,\mu(dx) + \hat\eta^n(X) \right)
	\\  
	&\leq K \left( \int_X \left| \hat\rho^n(x) - 1 \right| \, \mu(dx) + 2 \hat\eta^n(X)\right)
	\\  
	&\leq 2K \TV(\hat\mu^n,\mu) \underset{n\to+\infty}{\longrightarrow} 0.
	\end{split}
	\end{align}
		
\textcolor{black}{Since $(\hat \pi^n)_{n \in \N^*}$ converges to $\pi$ weakly, we deduce that so does $(\tilde \pi^n)_{n \in \N^*}$.
	Note that the first marginal $\tilde \pi^n$ is $\mu$, and therefore \cite[Lemma 2.1]{La18} or Lemma \ref{lem:convStable}} yield stable convergence of $\tilde\pi^n$ to $\pi$ as $n\to+\infty$.
	By \eqref{eq:auxiliary cara close}, we find that $(\hat\pi^n)_{n \in \N^*}$ also stably converges to $\pi$.
\end{proof}

\begin{lemma}\label{lem:caratheodorylimits}
	Let $f \colon X \to [1,+\infty)$ and $g \colon Y \to [1,+\infty)$ be continuous, and
	let $(\pi^k)_{k \in \N}$ converge to $\pi$ in $\mathcal P_{f \oplus g}(X \times Y)$.
	\begin{enumerate}[label = (\alph*)]
		\item\label{it:convlsc} If $c \colon X \times Y \to \R \cup \{ +\infty\}$ is lower semicontinuous and bounded from below by a negative multiple of $f \oplus g$, then
		\[	\liminf_{k\to+\infty} \int_{X\times Y} c(x,y) \, \pi^k(dx,dy) \geq \int_{X\times Y} c(x,y) \, \pi(dx,dy).	\]
		\item\label{it:convContinuous} If $c \colon X \times Y \to \R$ is continuous and absolutely bounded by positive multiple of $f \oplus g$, then
		\[	\lim_{k\to+\infty} \int_{X\times Y} c(x,y) \, \pi^k(dx,dy) = \int_{X\times Y} c(x,y) \, \pi(dx,dy).	\]
	\end{enumerate}

	\begin{enumerate}[label = (\alph*), resume]
		\item \label{it:convlowCaratheodory}If $c \colon X \times Y \to \R \cup \{+\infty\}$ is lower Carath\'eodory and bounded from below by a negative multiple of $f \oplus g$, and $\pi^k$ converges to $\pi$ stably, then
		\[	\liminf_{k\to+\infty}\int_{X\times Y} c(x,y) \, \pi^k(dx,dy) \geq \int_{X\times Y} c(x,y) \, \pi(dx,dy).	\]
		\item\label{it:convCaratheodory} If $c \colon X \times Y \to \R$ is Carath\'eodory and absolutely bounded by a positive multiple of $f \oplus g$, and $\pi^k$ converges to $\pi$ stably, then
		\[	\lim_{k\to+\infty} \int_{X\times Y} c(x,y) \, \pi^k(dx,dy) = \int_{X\times Y} c(x,y) \, \pi(dx,dy).	\]
	\end{enumerate}
\end{lemma}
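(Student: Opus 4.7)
The plan is to prove the four parts of Lemma \ref{lem:caratheodorylimits} in the order (a), (b), (c), (d), obtaining (b) from (a) and (d) from (c) by applying the liminf inequality to both $c$ and $-c$. The substantive work lies in (a) and (c); both follow the same three-step recipe: (i) shift $c$ by $K(f\oplus g)$ to reduce to a nonnegative function, (ii) approximate it from below by a suitable increasing sequence, and (iii) recover the shift via the fact that convergence in $\mathcal P_{f\oplus g}(X\times Y)$ forces $\int (f\oplus g)\,d\pi^k\to\int (f\oplus g)\,d\pi$, since $f\oplus g\in\Phi_{f\oplus g}(X\times Y)$.

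For (a), set $\tilde c := c + K(f\oplus g)$, which is nonnegative, lower semicontinuous, and possibly $+\infty$-valued. The truncations $\tilde c\wedge N$ are nonnegative bounded lower semicontinuous, so the Portmanteau direction of weak convergence (implied by $\mathcal P_{f\oplus g}$-convergence) yields
\[
\liminf_{k\to\infty}\int \tilde c\,d\pi^k \ge \liminf_{k\to\infty}\int (\tilde c\wedge N)\,d\pi^k \ge \int (\tilde c\wedge N)\,d\pi.
\]
Monotone convergence in $N$ gives $\liminf_k\int\tilde c\,d\pi^k\ge\int\tilde c\,d\pi$, and subtracting $K\int (f\oplus g)\,d\pi^k$ from both sides closes (a). Part (b) then follows by applying (a) to both $c$ and $-c$.

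For (c), the same reduction produces $\tilde c := c + K(f\oplus g)\ge 0$, but now only lower Carath\'eodory. The obstacle is that the bounded lower semicontinuous truncations $\tilde c\wedge N$ are not continuous in $y$, so Portmanteau cannot be invoked. I would instead approximate each $\tilde c\wedge N$ from below by bounded Carath\'eodory functions via the Moreau--Yosida inf-convolution in $y$,
\[
\tilde c^{N,n}(x,y) := \inf_{y'\in Y}\bigl[(\tilde c\wedge N)(x,y') + n\,d_Y(y,y')\bigr].
\]
A direct triangle-inequality argument shows $\tilde c^{N,n}(x,\cdot)$ is $n$-Lipschitz and bounded by $N$, and the lower semicontinuity of $(\tilde c\wedge N)(x,\cdot)$ combined with $n\,d_Y(y,\cdot)\to\infty$ off of $y$ gives $\tilde c^{N,n}(x,y)\uparrow(\tilde c\wedge N)(x,y)$ as $n\to\infty$. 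Joint measurability of $\tilde c^{N,n}$ is a standard consequence of $Y$ being Polish and $\tilde c\wedge N$ being a normal integrand, so $\tilde c^{N,n}$ is a bounded Carath\'eodory function. Stable convergence---by the Bogachev characterization used in the proof of Lemma \ref{lem:convStable}---then gives $\int \tilde c^{N,n}\,d\pi^k\to\int \tilde c^{N,n}\,d\pi$, and combining with $\tilde c^{N,n}\le \tilde c$ produces
\[
\liminf_{k\to\infty}\int\tilde c\,d\pi^k \ge \int \tilde c^{N,n}\,d\pi.
\]
Monotone convergence as $n\to\infty$ and then $N\to\infty$ yields $\liminf_k\int\tilde c\,d\pi^k\ge\int\tilde c\,d\pi$; removing the $K(f\oplus g)$-offset as in (a) closes (c), and (d) follows from (c) applied to $c$ and $-c$.

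The main technical hurdle is the joint measurability of the Moreau--Yosida regularization $\tilde c^{N,n}$, a classical fact from the theory of normal integrands on Polish spaces. Once this is granted, the proof of (c) is a transparent analogue of (a), with Portmanteau's liminf inequality for bounded lower semicontinuous integrands replaced by the convergence against bounded Carath\'eodory integrands that is exactly the quantitative content of stable convergence.
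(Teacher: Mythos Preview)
Your proof is correct and considerably more detailed than what the paper provides. The paper's own proof is essentially a two-line remark: it declares the results ``well-known'' and for the Carath\'eodory parts cites \cite[Theorem 8.10.65]{Bo00}, which directly gives that $\pi\mapsto\pi(c)$ is lower semicontinuous w.r.t.\ stable convergence for every bounded lower Carath\'eodory $c$. Your argument for (c) reproves this cited fact from scratch via Moreau--Yosida regularization in the $y$-variable, reducing bounded lower Carath\'eodory to bounded Carath\'eodory and then invoking the Bogachev characterization of stable convergence (the same theorem, in fact) at that level. The shift-by-$K(f\oplus g)$ and truncation steps you use to pass from the unbounded to the bounded case are exactly the standard moves the paper tacitly assumes. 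So your route is not genuinely different---it is a spelled-out version of the argument the paper outsources to the literature---and the measurability of the Moreau--Yosida envelope, which you correctly flag as the only technical point, is indeed a classical normal-integrand fact.
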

\begin{proof}
	These results are well-known.
	Note that by \cite[Theorem 8.10.65]{Bo00} we have for every bounded lower Carathéodory map $c$ that $\pi \mapsto \pi(c)$ is lower semicontinuous 
	w.r.t.\ the topology of stable convergence.
\end{proof}

When $f\colon X \to [1,+\infty)$ and $g\colon Y \to [1,+\infty)$ are continuous functions, we say that a sequence $\pi^k \in \mathcal P_{f \oplus g}(X \times Y)$ converges stably in $\mathcal P_{f \oplus g}(X \times Y)$ to $\pi$ if and only if, for $k \to +\infty$, $\pi^k$ converges to $\pi$ in the topology of stable convergence and $\pi^k(f \oplus g)$ to $\pi(f \oplus g)$.

\begin{proposition}\label{prop: C Pf lsc}
	Let $f\colon X \to [1,+\infty)$ and $g\colon Y \to [1,+\infty)$ be continuous functions, 
	and $C\colon X \times \mathcal P_g(Y) \to\R\cup\{+\infty\}$ be measurable and bounded from below by a negative multiple of $f\oplus\hat g$. Then
	\begin{enumerate}[label=(\alph*)]
		\item\label{it:CP lsc}
		If $C$ is lower semicontinuous, then
		\begin{equation} \label{eq:Pf lsc on big space}
		\mathcal P_{f \oplus \hat g}(X \times \mathcal P_g(Y)) \ni P \mapsto \int_{X\times\mathcal P_g(Y)} C(x,p) \, P(dx,dp)
		\end{equation} 
		is lower semicontinuous.
		\item\label{it:Cpix lsc} Suppose in addition that for all $x\in X$, the map $p\mapsto C(x,p)$ is convex. Then
		\begin{equation}\label{eq:Pf lsc}
		\mathcal P_{f \oplus g}(X \times Y) \ni \pi \mapsto \int_X C(x,\pi_x)\,\mu(dx),
		\end{equation}
		where $\mu$ denotes the $X$-marginal of $\pi$, is lower semicontinuous.
		\item\label{it:CP stable}
		On the other hand, if $C$ is lower Carathéodory, then
		\begin{equation} \label{eq:Pf lsc on big space stable}
			\mathcal P_{f \oplus \hat g}(X \times \mathcal P_g(Y)) \ni P \mapsto \int_{X\times\mathcal P_g(Y)} C(x,p) \, P(dx,dp)
		\end{equation} 
		is lower semicontinuous w.r.t.\ stable convergence on $\mathcal P_{f \oplus \hat g}(X \times \mathcal P_g(Y))$.
		\item\label{it:Cpix lsc stable} Suppose in addition that for all $x\in X$, the map $p\mapsto C(x,p)$ is convex. Then
		\begin{equation}\label{eq:Pf lsc stable}
		\mathcal P_{f \oplus g}(X \times Y) \ni \pi \mapsto \int_X C(x,\pi_x)\,\mu(dx),
		\end{equation}
		where $\mu$ denotes the $X$-marginal of $\pi$, is lower semicontinuous w.r.t.\ stable convergence on $\mathcal P_{f \oplus g}(X \times Y)$.
	\end{enumerate}
\end{proposition}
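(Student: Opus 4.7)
Parts (a) and (c) will fall out of Lemma \ref{lem:caratheodorylimits} \ref{it:convlsc} and \ref{it:convlowCaratheodory}, respectively, applied with the roles of $(X\times Y,f\oplus g)$ played by $(X\times\mathcal P_g(Y),f\oplus\hat g)$. The hypotheses translate verbatim: $C$ is bounded below by a negative multiple of $f\oplus\hat g$ on $X\times\mathcal P_g(Y)$, is lower semicontinuous in both variables in the setting of (a), and is lower Carathéodory in the setting of (c). No further work is required.

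The real content lies in (b) and (d), which I would prove by a single compactness-and-Jensen argument. Given a sequence $\pi^k\to\pi$ in $\mathcal P_{f\oplus g}(X\times Y)$ (respectively stably), pass to a subsequence along which $\int_X C(x,\pi^k_x)\,\mu^k(dx)$ realizes $\liminf_k$. The convergence of marginals $\mu^k\to\mu$ in $\mathcal P_f(X)$ and $\nu^k\to\nu$ in $\mathcal P_g(Y)$, combined with Lemma \ref{lem:rel comp product space}, ensures that $\{J(\pi^k):k\in\N\}$ is relatively compact in $\mathcal P_{f\oplus\hat g}(X\times\mathcal P_g(Y))$; extract a sub-subsequence $J(\pi^{k_\ell})\to P^*$. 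Continuity of the extended intensity (Lemma \ref{lem: continuity I}) yields $\hat I(P^*)=\pi$, so writing $P^*=\mu\otimes P^*_x$ identifies $\pi_x=I(P^*_x)$ for $\mu$-a.e.\ $x$. Applying Proposition \ref{infinite-dimensional Jensen inequality} slice-wise to $C(x,\cdot)+Kf(x)$, which is convex, lower semicontinuous in $p$, and bounded below by $-K\hat g$, gives $C(x,\pi_x)\le\int_{\mathcal P_g(Y)} C(x,p)\,P^*_x(dp)$ $\mu$-a.s. Integrating against $\mu$ and applying part (a) (respectively (c)) to the convergent sequence $J(\pi^{k_\ell})\to P^*$ yields
\begin{equation*}
\int_X C(x,\pi_x)\,\mu(dx)\le \int_{X\times\mathcal P_g(Y)} C(x,p)\,P^*(dx,dp)\le \liminf_\ell\int_X C(x,\pi^{k_\ell}_x)\,\mu^{k_\ell}(dx),
\end{equation*}
which is the desired inequality.

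The main technical obstacle is that $J$ fails to be continuous for the weak (or stable) topology on $\mathcal P(X\times Y)$, so one cannot simply pass the cost through $J(\pi^k)\to J(\pi)$; the argument sidesteps this by operating with an arbitrary accumulation point $P^*$ and then absorbing the resulting gap through Jensen's inequality, which is precisely where convexity of $C$ in its second argument enters. In part (d) one further verification is needed: the selected sub-subsequence $J(\pi^{k_\ell})$ must converge to $P^*$ stably, not merely in $\mathcal P_{f\oplus\hat g}$. Since the first marginal of $J(\pi^k)$ is $\mu^k$ and stable convergence of $\pi^k$ implies $\mu^k\to\mu$ strongly by Lemma \ref{lem:convStable}, the first marginals of $J(\pi^{k_\ell})$ converge strongly to $\mu$, which combined with their weak convergence to $P^*$ upgrades to stable convergence via Lemma \ref{lem:convStable}, thereby legitimizing the use of (c) in the last display.
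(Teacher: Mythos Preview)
Your proposal is correct and follows essentially the same route as the paper: parts (a) and (c) are obtained from Lemma \ref{lem:caratheodorylimits} applied on $X\times\mathcal P_g(Y)$, and parts (b) and (d) via compactness of $(J(\pi^k))_k$ from Lemma \ref{lem:rel comp product space}, extraction of an accumulation point $P^*$, identification $\hat I(P^*)=\pi$ by Lemma \ref{lem: continuity I}, and the Jensen step from Proposition \ref{infinite-dimensional Jensen inequality}. Your handling of the stable upgrade in (d) via Lemma \ref{lem:convStable} is also exactly what the paper does.
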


\begin{proof}
	Lower semicontinuity of \eqref{eq:Pf lsc on big space} and \eqref{eq:Pf lsc on big space stable} is a consequence of Lemma \ref{lem:caratheodorylimits} \textcolor{black}{with $Y$ and $g$ replaced by $\mathcal P_g(Y)$ and $\hat g$}.
	To see \eqref{eq:Pf lsc} (resp.\ \eqref{eq:Pf lsc stable}),	
	let $(\pi_k)_{k\in\N}\in\mathcal P_{f\oplus g}(X\times Y)^\N$ converge (resp.\ stably) in $\mathcal P_{f\oplus g}(X\times Y)$ to some $\pi$. We find by the first part of Lemma \ref{lem:rel comp product space}
	an accumulation point $P \in \mathcal P_{f \oplus \hat g}( X \times \mathcal P(Y))$ of $(J(\pi^k))_{k \in \N}$.
	By possibly passing to a subsequence we can assume that $P^k := J(\pi^k)$ converges to $P$ in $\mathcal P_{f \oplus \hat g}(X \times P(Y))$ as $k$ goes to $+\infty$.
	In the case that $(\pi^k)_{k \in \N}$ converge stably in $\mathcal P_{f \oplus g}(X \times Y)$ to $\pi$, we deduce \textcolor{black}{by Lemma \ref{lem:convStable}} that $(P^k)_{k \in \N}$ converges stably in $\mathcal P_{f \oplus \hat g}(X \times \mathcal P_g(Y))$ to $P$.
	Write $\mu^k$, $k \in \N$ and $\mu$ for the $X$-marginal of $\pi^k$ and $\pi$, respectively.
	Due to \eqref{eq:Pf lsc on big space} (resp.\ \eqref{eq:Pf lsc on big space stable}), we obtain
	\begin{align*}
	\liminf_{k\to+\infty} \int_X C(x,\pi^k_x) \, \mu^k(dx) &= \liminf_{k\to+\infty} \int_{X\times\mathcal P_g(Y)} C(x,p) \, P^k(dx,dp) \\
	& \geq \int_{X\times\mathcal P_g(Y)} C(x,p)\, P(dx,dp) \\
	& \geq \int_X C\left(x, I(P_x) \right) \, \mu(dx) \\
	& = \int_X C\left(x, \hat I(P)_x\right) \, \mu(dx),
	\end{align*}
	where we used Proposition \ref{infinite-dimensional Jensen inequality} for the last inequality.
	Since $\hat I$ is continuous by Lemma \ref{lem: continuity I}, we find that $\hat I(P^k) \to \hat I(P)$ and $\hat I(P^k) = \pi^k \to \pi$ as $k \to +\infty$.
	But the weak topology is Hausdorff and therefore $\pi = \hat I(P)$ yielding
	\[ \liminf_{k\to+\infty} \int_X C(x,\pi^k_x)\, \mu^k(dx) \geq \int_X C(x,\pi_x)\, \mu(dx), \]
	and thus \eqref{eq:Pf lsc} (resp.\ \eqref{eq:Pf lsc stable}).
\end{proof}

\bibliography{../../MBjointbib/joint_biblio}

\begin{thebibliography}{10}

\bibitem{AcBePa20}
B.~Acciaio, M.~Beiglb\"{o}ck, and G.~Pammer.
\newblock Weak transport for non-convex costs and model-independence in a
  fixed-income market.
\newblock {\em Math. Finance}, 31(4):1423--1453, 2021.

\bibitem{AlBoCh18}
J.-J. Alibert, G.~Bouchitt{\'e}, and T.~Champion.
\newblock A new class of costs for optimal transport planning.
\newblock {\em European Journal of Applied Mathematics}, 30(6):1229--1263,
  2019.

\bibitem{Ch06}
C.~D. Aliprantis and K.~C. Border.
\newblock {\em Infinite dimensional analysis: A hitchhiker's guide}.
\newblock Springer, 3rd edition, 2006.

\bibitem{AmGi13}
L.~Ambrosio and N.~Gigli.
\newblock A user's guide to optimal transport.
\newblock In {\em Modelling and optimisation of flows on networks}, volume 2062
  of {\em Lecture Notes in Math.}, pages 1--155. Springer, Heidelberg, 2013.

\bibitem{BaBePa18}
J.~Backhoff, M.~Beiglb{\"o}ck, and G.~Pammer.
\newblock Existence, duality, and cyclical monotonicity for weak transport
  costs.
\newblock {\em Calculus of Variations and Partial Differential Equations},
  58(6):1--28, 2019.

\bibitem{BaBaBeEd19b}
J.~Backhoff-Veraguas, D.~Bartl, M.~Beiglb\"{o}ck, and M.~Eder.
\newblock All adapted topologies are equal.
\newblock {\em Probab. Theory Related Fields}, 178(3-4):1125--1172, 2020.

\bibitem{BaBeHuKa20}
J.~Backhoff-Veraguas, M.~Beiglb\"{o}ck, M.~Huesmann, and S.~K\"{a}llblad.
\newblock Martingale {B}enamou-{B}renier: a probabilistic perspective.
\newblock {\em Ann. Probab.}, 48(5):2258--2289, 2020.

\bibitem{BaPa20}
J.~Backhoff-Veraguas and G.~Pammer.
\newblock Applications of weak transport theory.
\newblock {\em Bernoulli}, 28(1):370--394, 2022.

\bibitem{BaPa19}
J.~Backhoff-Veraguas and G.~Pammer.
\newblock Stability of martingale optimal transport and weak optimal transport.
\newblock {\em Ann. Appl. Probab.}, 32(1):721--752, 2022.

\bibitem{BeCoHu14}
M.~{Beiglb{\"o}ck}, A.~Cox, and M.~Huesmann.
\newblock Optimal transport and {S}korokhod embedding.
\newblock {\em Invent. Math.}, 208(2):327--400, 2017.

\bibitem{BeHePe12}
M.~Beiglb{\"o}ck, P.~{Henry-Labord{\`e}re}, and F.~Penkner.
\newblock Model-independent bounds for option prices: A mass transport
  approach.
\newblock {\em Finance Stoch.}, 17(3):477--501, 2013.

\bibitem{BeJoMaPa21a}
M.~Beiglb{\"o}ck, B.~Jourdain, W.~Margheriti, and G.~Pammer.
\newblock Approximation of martingale couplings on the line in the adapted weak
  topology.
\newblock {\em Probab. Theory Relat. Fields, to appear}, 2021.

\bibitem{BeJu16}
M.~Beiglb{\"o}ck and N.~Juillet.
\newblock On a problem of optimal transport under marginal martingale
  constraints.
\newblock {\em Ann. Probab.}, 44(1):42--106, 2016.

\bibitem{BeNuSt19}
M.~{Beiglb{\"o}ck}, M.~{Nutz}, and F.~{Stebegg}.
\newblock {Fine Properties of the Optimal Skorokhod Embedding Problem}.
\newblock {\em J. Eur. Math. Soc. (JEMS), to appear}, Mar 2021.

\bibitem{Bo00}
V.~I. Bogachev.
\newblock {\em Measure theory}, volume~1.
\newblock Springer Science and Business Media, 2007.

\bibitem{Bo92}
V.~I. Bogachev.
\newblock {\em Measure theory}, volume~2.
\newblock Springer Science and Business Media, 2007.

\bibitem{BrLi78}
D.~T. Breeden and R.~H. Litzenberger.
\newblock Prices of state-contingent claims implicit in option prices.
\newblock {\em The Journal of Business}, 51(4):621--51, 1978.

\bibitem{BrJu21}
M.~Br{\"u}ckerhoff and N.~Juillet.
\newblock Instability of martingale optimal transport in dimension d $\ge$2.
\newblock {\em arXiv preprint arXiv:2101.06964}, 2021.

\bibitem{ChKiPrSo20}
P.~Cheridito, M.~Kiiski, D.~J. Pr\"{o}mel, and H.~M. Soner.
\newblock Martingale optimal transport duality.
\newblock {\em Math. Ann.}, 379(3-4):1685--1712, 2021.

\bibitem{DeTo17}
H.~De~March and N.~Touzi.
\newblock Irreducible convex paving for decomposition of multidimensional
  martingale transport plans.
\newblock {\em Ann. Probab.}, 47(3):1726--1774, 2019.

\bibitem{DoSo12}
Y.~Dolinsky and H.~M. Soner.
\newblock Martingale optimal transport and robust hedging in continuous time.
\newblock {\em Probab. Theory Relat. Fields}, 160(1-2):391--427, 2014.

\bibitem{EtKu09}
S.~N. Ethier and T.~G. Kurtz.
\newblock {\em Markov processes: characterization and convergence}, volume 282.
\newblock John Wiley \& Sons, 2009.

\bibitem{GaHeTo13}
A.~Galichon, P.~{Henry-Labord{\`e}re}, and N.~Touzi.
\newblock A stochastic control approach to no-arbitrage bounds given marginals,
  with an application to lookback options.
\newblock {\em Ann. Appl. Probab.}, 24(1):312--336, 2014.

\bibitem{GhKiLi19}
N.~Ghoussoub, Y.-H. Kim, and T.~Lim.
\newblock Structure of optimal martingale transport plans in general
  dimensions.
\newblock {\em Ann. Probab.}, 47(1):109--164, 2019.

\bibitem{GoJu18}
N.~Gozlan and N.~Juillet.
\newblock On a mixture of {B}renier and {S}trassen theorems.
\newblock {\em Proceedings of the London Mathematical Society},
  120(3):434--463, 2020.

\bibitem{GoRoSaSh18}
N.~Gozlan, C.~Roberto, P.-M. Samson, Y.~Shu, and P.~Tetali.
\newblock Characterization of a class of weak transport-entropy inequalities on
  the line.
\newblock {\em Ann. Inst. Henri Poincar\'e Probab. Stat.}, 54(3):1667--1693,
  2018.

\bibitem{GoRoSaTe17}
N.~Gozlan, C.~Roberto, P.-M. Samson, and P.~Tetali.
\newblock Kantorovich duality for general transport costs and applications.
\newblock {\em J. Funct. Anal.}, 273(11):3327--3405, 2017.

\bibitem{Gr16}
C.~{Griessler}.
\newblock {An extended footnote on finitely minimal martingale measures}.
\newblock {\em ArXiv e-prints}, June 2016.

\bibitem{GuOb19}
G.~Guo and J.~Ob\l\'{o}j.
\newblock Computational methods for martingale optimal transport problems.
\newblock {\em Ann. Appl. Probab.}, 29(6):3311--3347, 2019.

\bibitem{GuMeNu17}
J.~Guyon, R.~Menegaux, and M.~Nutz.
\newblock {Bounds for VIX futures given S\&P 500 smiles}.
\newblock {\em Finance and Stochastics}, 21(3):593--630, 2017.

\bibitem{HoNe12}
D.~Hobson and A.~Neuberger.
\newblock Robust bounds for forward start options.
\newblock {\em Math. Finance}, 22(1):31--56, 2012.

\bibitem{La18}
R.~{Lassalle}.
\newblock {Causal transference plans and their Monge-Kantorovich problems}.
\newblock {\em Stochastic Analysis and Applications}, 36(3):452--484, 2018.

\bibitem{ObSi17}
J.~Ob{\l}{\'o}j and P.~Siorpaes.
\newblock {Structure of martingale transports in finite dimensions}.
\newblock arXiv:1702.08433, 2017.

\bibitem{Sa15}
F.~Santambrogio.
\newblock {\em Optimal transport for applied mathematicians}, volume~87 of {\em
  Progress in Nonlinear Differential Equations and their Applications}.
\newblock Birkh\"auser/Springer, Cham, 2015.
\newblock Calculus of variations, PDEs, and modeling.

\bibitem{BaBeEdPi17}
J.~B. Veraguas, M.~Beiglb\"{o}ck, M.~Eder, and A.~Pichler.
\newblock Fundamental properties of process distances.
\newblock {\em Stochastic Process. Appl.}, 130(9):5575--5591, 2020.

\bibitem{Vi1}
C.~Villani.
\newblock {\em Topics in optimal transportation}, volume~58 of {\em Graduate
  Studies in Mathematics}.
\newblock American Mathematical Society, Providence, RI, 2003.

\bibitem{Vi09}
C.~Villani.
\newblock {\em Optimal Transport. Old and New}, volume 338 of {\em Grundlehren
  der mathematischen Wissenschaften}.
\newblock Springer, 2009.

\bibitem{Wi20}
J.~Wiesel.
\newblock Continuity of the martingale optimal transport problem on the real
  line.
\newblock {\em ArXiv e-prints}, 2020.

\end{thebibliography}
\bibliographystyle{abbrv}
\end{document}